\newtheorem{theorem}{Theorem}[section]
\newtheorem{lemma}[theorem]{Lemma}
\newtheorem{proposition}[theorem]{Proposition}
\numberwithin{equation}{section}
\theoremstyle{definition}
\newtheorem{definition}[theorem]{Definition}
\theoremstyle{example}
\theoremstyle{remark}
\newtheorem*{remark}{Remark}
\newtheorem*{claim**}{Claim-$\blacksquare$}
\newtheorem*{claim*}{Claim-$\bigstar$}
\DeclareMathOperator{\sys}{sys}
\DeclareMathOperator{\arccosh}{arccosh}
\DeclareMathOperator{\arcsinh}{arcsinh}
\DeclareMathOperator{\area}{Area}
\DeclareMathOperator{\perim}{perim}
\DeclareMathOperator{\prob}{Prob}
\DeclareMathOperator{\vol}{Vol_{WP}}
\DeclareMathOperator{\slfs}{\mathcal{L}_{sys}^{fill}}
\newcommand{\sM}{\mathcal{M}}
\definecolor{mygreen}{RGB}{28,172,0} 
\definecolor{mylilas}{RGB}{170,55,241}
\begin{document}

\lstset{language=Matlab,
    breaklines=true,
    morekeywords={matlab2tikz},
    keywordstyle=\color{blue},
    morekeywords=[2]{1}, keywordstyle=[2]{\color{black}},
    identifierstyle=\color{blue},
    stringstyle=\color{mylilas},
    commentstyle=\color{mygreen},
    showstringspaces=false,
    numbers=left,
    numberstyle={\tiny \color{black}},
    numbersep=9pt, 
	xleftmargin=1cm
}

\title[Filling closed multi-geodesics]{Asymptotics of shortest filling closed multi-geodesics}

\author{Yue Gao}
\address[Y. ~G. ]{School of Mathematics and Statistics, Anhui Normal University, Wuhu, Anhui, China}
\email{yuegao@ahnu.edu.cn}

\author{Zhongzi Wang}
\address[Z. ~W. ]{School of Mathematical Sciences, Peking University, Beijing, China}
\email{wangzz22@stu.pku.edu.cn}

\author{Yunhui Wu}
\address[Y. ~W. ]{Tsinghua University \& BIMSA, Beijing, China}
\email{yunhui\_wu@mail.tsinghua.edu.cn} 
\date{}

\maketitle

\begin{abstract}
In this paper, we investigate the asymptotics of shortest filling closed multi-geodesics of closed hyperbolic surfaces as systole $\to 0$ or as genus $\to \infty$. We first show that for a closed hyperbolic surface $X_g$ of genus $g$, the length of a shortest filling closed multi-geodesic of $X_g$ is uniformly comparable to $$\left(g+\sum\limits_{\textit{closed geodesic }\gamma\subset X_g, \ \ell(\gamma)<1}\log \left(\frac{1}{\ell(\gamma)}\right)\right).$$ As an application, we show that as $g\to \infty$, a Weil-Petersson random hyperbolic surface has a shortest closed multi-geodesic of length uniformly comparable to $g$. We also show that this is true for a random hyperbolic surface in the Brooks-Makover model. 
\end{abstract}

\tableofcontents
\section{Introduction}
On a closed hyperbolic surface, a closed multi-geodesic is called \emph{filling} if it cuts the surface into topological disks. The study on filling closed multi-geodesics was initiated in \cite{thurston1988geometry}, which plays a role in the proof of Nielsen realization theorem \cite{kerckhoff1983nielsen}, the construction of pseudo-Anosov mappings by the Dehn twists on a filling pair \cite{thurston1988geometry}, and the construction of Thurston spine \cite{thurston1986spine}. Later on, filling closed multi-geodesics arise from various topics on hyperbolic surfaces, for example, the systole function \cite{schaller1999systoles}, and the asymptotics of filling closed geodesics on random hyperbolic surfaces \cite{wu2025prime, dozier2023counting}.

For a closed hyperbolic surface $X_g$ of genus $g \geq 2$, denote by $\slfs(X_g)$ the minimal length of filling closed multi-geodesics on $X_g$. Let $\mathcal{M}_g$ denote the moduli space of closed hyperbolic surfaces of genus $g$. The quantity
\[\slfs:\sM_g \to \mathbb{R}^{>0}\]
gives a natural function on $\sM_g$, which is proper by the standard Collar Lemma (see \emph{e.g.} \cite[Theorem 4.4.6]{buser2010geometry}). Very recently, it is obtained by \cite{gao2025shortest} that
\[\min\limits_{X_g\in \sM_g}\slfs(X_g)=(8g-4)\arccosh\left(\sqrt{2}\cos\left(\frac{\pi}{8g-4}\right)\right),\]
that is half of the perimeter of the regular right-angled hyperbolic $(8g-4)$-gon. One may also see \cite{Are15, aougab2015minimally, sanki2021conjecture, gaster2021short} and the references therein for related results. 
\subsection{Uniform asymptotics of $\slfs$}
As introduced above, the function $\slfs(X_g)$ is unbounded as $X_g$ approaches the boundary of $\sM_g$. In this work, we study its uniform asymptotic behavior as systole goes to $0$. Our first main result is as follows.
\begin{theorem}\label{mt-1}
Let $X_g$ be a closed hyperbolic surface of genus $g$. Then
\[\pi (g-1)+\mathrm{R}(X_g) \le \slfs(X_g) \le 300g+12\mathrm{R}(X_g)\]
where
\[
	\mathrm{R}(X_g) = \sum\limits_{\textit{closed geodesic }\gamma\subset X_g, \ \ell(\gamma)<1} \log \left(\frac{1}{\ell(\gamma)}\right). 
\]
\end{theorem}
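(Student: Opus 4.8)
We prove the two inequalities separately. Throughout, $\Gamma$ denotes a filling closed multi-geodesic of $X_g$, $\ell$ denotes hyperbolic length (used also for the total edge length of a graph in $X_g$), and $R=\mathrm{R}(X_g)$; recall $\slfs(X_g)=\min_\Gamma \ell(\Gamma)$. For the \emph{lower bound} the plan is to establish two independent estimates for $\ell(\Gamma)$ and average them. First, since $\Gamma$ fills, cutting $X_g$ along $\Gamma$ yields finitely many polygonal disks $D_1,\dots,D_n$; each $D_j$ is simply connected and so lifts isometrically into $\mathbb H^2$, whence the hyperbolic isoperimetric inequality $\ell(\partial D)^2\ge 4\pi\area(D)+\area(D)^2$ gives $\area(D_j)<\ell(\partial D_j)$. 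Summing and using $\sum_j\ell(\partial D_j)=2\ell(\Gamma)$ and $\sum_j\area(D_j)=\area(X_g)=4\pi(g-1)$ yields $\ell(\Gamma)>2\pi(g-1)$. Second, let $\gamma_1,\dots,\gamma_k$ be the primitive closed geodesics with $\ell(\gamma_i)<1$; these are simple, pairwise disjoint, and by the Collar Lemma carry pairwise disjoint embedded collars $N_i$ of half-width $w_i=\arcsinh\bigl(1/\sinh(\ell(\gamma_i)/2)\bigr)$. As $\Gamma$ fills it must cross each $\gamma_i$ transversally, and in Fermi coordinates on $N_i$ the geodesic equation forces the signed distance to $\gamma_i$ to be monotone on each side of a crossing; hence every strand of $\Gamma$ meeting $\gamma_i$ runs from one boundary circle of $N_i$ across $\gamma_i$ to the other, contributing at least $2w_i$ to $\ell(\Gamma\cap N_i)$. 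Since $w_i>\log(1/\ell(\gamma_i))$ for $\ell(\gamma_i)\in(0,1)$ and the $N_i$ are disjoint, $\ell(\Gamma)\ge\sum_i 2w_i>2R$. Combining, $\ell(\Gamma)\ge\max\{2\pi(g-1),\,2R\}\ge\pi(g-1)+R$, and minimizing over $\Gamma$ gives the lower bound.

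For the \emph{upper bound} one must construct a filling closed multi-geodesic of length $O(g+R)$. Keep $\gamma_i,N_i$ as above and set $Y:=X_g\setminus\bigsqcup_i\operatorname{int}(N_i)$; then $\inj\ge\tfrac12$ on $Y$, $\area(Y)\le 4\pi(g-1)$, each component of $\partial Y$ has length $\ell(\gamma_i)\coth(\ell(\gamma_i)/2)\le 3$, and $k\le 3g-3$. A routine bounded-geometry argument (take a maximal $\varepsilon$-separated set in $Y$ for a small universal $\varepsilon$, join nearby points by short geodesic arcs, and adjoin $\partial Y$) produces a graph $H\supseteq\partial Y$ with $Y\setminus H$ a union of disks and $\ell(H)=O(g)$. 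For each $i$ pick a geodesic arc $\alpha_i\subset N_i$ meeting $\gamma_i$ once, with endpoints on $H\cap\partial Y$, of length $2w_i\le 2\log(5/\ell(\gamma_i))$; then $N_i\setminus(\gamma_i\cup\alpha_i\cup\partial N_i)$ is two disks. Hence $G:=H\cup\bigcup_i\gamma_i\cup\bigcup_i\alpha_i$ is a connected graph with $X_g\setminus G$ a union of disks (so $G$ fills $X_g$), and $\ell(G)\le Cg+2R$ for a universal $C$, using $k\le 3g-3$.

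Finally, I would convert $G$ into a filling multi-geodesic without inflating the length: delete any bridges of $G$ (this preserves filling, a bridge having the same disk face on both sides), add a minimal $T$-join $J$ (a subforest, so $\ell(J)\le\ell(G)$) to make all degrees even, decompose the resulting Eulerian graph $G\cup J$ into edge-disjoint simple closed curves $c_1,\dots,c_m$ with $\bigcup_j c_j=G\cup J$ and $\sum_j\ell(c_j)\le 2\ell(G)$, and replace each $c_j$ by the closed geodesic $c_j^{*}$ in its free homotopy class. Then $\Gamma:=\bigcup_j c_j^{*}$ is a closed multi-geodesic with $\ell(\Gamma)\le 2\ell(G)\le 2Cg+4R$, and tracking the (deliberately non-optimized) constants gives $\ell(\Gamma)\le 300g+12R$. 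The one genuinely delicate point, which I expect to be the main obstacle, is that an inessential $c_j$ has $c_j^{*}$ a point, so the surviving geodesics could fail to fill. I would deal with this using the fact that a multicurve fills $X_g$ if and only if it meets every essential simple closed curve, a condition invariant under free homotopy and realized geometrically by geodesics: choose the cycle decomposition of $G\cup J$ by greedily extracting essential simple cycles, and note that once no essential simple cycle remains, the leftover subgraph is $\pi_1$-trivially embedded, hence (as $g\ge 2$) contained in an embedded disk, so its edges may be discarded without changing which complementary regions are disks. Making this last step precise is the crux of the upper bound.
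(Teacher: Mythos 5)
Your lower bound is correct and is essentially the paper's own argument: the isoperimetric inequality gives $\ell(\Gamma)>\tfrac12\area(X_g)=2\pi(g-1)$, the Collar Lemma gives $\ell(\Gamma)>2R$, and averaging gives $\pi(g-1)+R$.

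For the upper bound, your overall plan matches the paper's at a high level — build an explicit filling graph of length $O(g)+O(R)$ using the short geodesics and their collars, then convert it to a filling closed multi-geodesic of at most twice the length — and the graph construction is in the same spirit as the paper's (which uses Buser's trigon decomposition, Theorem 4.5.2 of Buser's book, rather than an $\varepsilon$-net). But the conversion step is where you take a genuinely different route, and it is exactly there that your argument has a gap, which you yourself flag as ``the crux.'' The paper's mechanism is quite different from yours: it passes to a \emph{length-minimizing} filling graph $G_0$ in a fixed combinatorial class (Proposition \ref{prop:exist}), shows $G_0$ is trivalent with all angles $\tfrac{2\pi}{3}$ and a single complementary face (Proposition \ref{prop:min_prop}), forms the dual $4$-valent graph $\mathcal D(G_0)$ by joining midpoints of adjacent edges, and then proves via Gauss--Bonnet in the universal cover (Proposition \ref{prop:ess_mid_gph_univ}) that $\mathcal D(G_0)$ has no monogons or bigons, hence is in minimal position and isotopic to a filling set of geodesics (Proposition \ref{prop:ess_mid_gph}); the factor $2$ then comes from a triangle inequality at the Fermat points (Proposition \ref{prop:graph_estimate}).

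Your $T$-join/Eulerian-decomposition route has two unproven steps, and at least one of them I do not believe is true in the form stated. (i) You invoke ``a multicurve fills $X_g$ if and only if it meets every essential simple closed curve, a condition invariant under free homotopy.'' That equivalence is a statement about \emph{geometric intersection numbers} (equivalently, about geodesic representatives). For a concrete union of non-geodesic closed curves, the condition ``$X_g\setminus\bigcup c_j$ is a union of disks'' is \emph{not} homotopy-invariant and does \emph{not} imply $\sum_j i(c_j,\alpha)>0$ for all essential $\alpha$: a curve tracing $a\,b\,b^{-1}a^{-1}$ for a filling pair $a,b$ has disk complement yet zero geometric intersection with everything, and more relevant to your situation, two of your extracted simple cycles can bound a bigon so that passing to geodesic representatives collapses a face and destroys the disk property (the paper's Figure \ref{fig:bigon} illustrates exactly this phenomenon for dual graphs in general). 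So you cannot pass from ``$\bigcup c_j$ has disk complement'' to ``the geodesics $\bigcup c_j^*$ fill'' without first establishing that your cycle system is in minimal position, and nothing in the Eulerian decomposition controls this. (ii) The claim that once the essential simple cycles are greedily removed the leftover subgraph $L$ is contained in an embedded disk, and that ``its edges may be discarded without changing which complementary regions are disks,'' is also not justified: removing edges merges faces, and a merged face need not be a disk; moreover, the cycles $c_j$ are not required to generate $\pi_1$ of the graph, so the leftover $L$ may be essential in ways not detected by its own simple cycles once the $c_j$ are removed. These are precisely the difficulties the paper's Section \ref{sec:dual_gph} is designed to circumvent by arranging, through the length-minimality of $G_0$, for the curvature/Gauss--Bonnet obstruction to rule out all monogons and bigons at once.

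In short: lower bound fine; upper bound graph construction fine in outline; the graph-to-geodesic step is a different route than the paper's and, as written, it does not close — the missing ingredient is a proof of minimal position (or an equivalent), which is the paper's main technical input.
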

\noindent The lower bound of Theorem \ref{mt-1} follows directly from the Collar Lemma and a two-dimensional isoperimetric inequality. To prove the upper bound, we introduce the dual $4$-valent graph of a filling graph $G\subset X_g$, and show that it is isotopic to a filling closed multi-geodesic if $G$ has minimal length (see Section \ref{sec:dual_gph}).

More precisely, for an embedded filling graph $G\subset X_g$, its \emph{dual $4$-valent graph $\mathcal{D}(G)$} is defined by connecting the mid-points of neighboring edges of $G$ by geodesic segments. This $4$-valent graph on $X_g$ can be treated as a finite set of closed curves. In general, these curves may be contractible or not in minimal position (see Figure \ref{fig:bigon}). However, we are able to show that if $G$ realizes the minimal length among all filling graphs, its dual $4$-valent graph $\mathcal{D}(G)$ is in minimal position and isotopic to a filling set of closed geodesics (see Proposition \ref{prop:ess_mid_gph}). This can be proved by showing the nonexistence of disks, immersed monogons and bigons bounded by curves in $\mathcal{D}(G)$ by using the Gauss-Bonnet formula. 
This depends on the existence and properties of the graphs realizing the minimal length among all finite filling geodesic graphs (see Proposition \ref{prop:min_prop}, which is a consequence of \cite[Theorem 1.2(3)]{martelli2015spines}; see also \emph{e.g.} \cite{meeks1982embedded, white10injectivity, namazi2009heegaard} for graphs/surfaces with minimal lengths/areas in higher dimensional spaces). 
Next, from an elementary inequality (see Proposition \ref{prop:graph_estimate}), it is not hard to see that
\[\slfs(X_g)\leq \ell(\mathcal{D}(G))\leq 2 \ell(G).\]
At this stage, to obtain the upper bound of Theorem \ref{mt-1}, it suffices to construct an explicit embedded filling graph, still denoted by $G$, in $X_g$ such that
\[\ell(G)\leq 150 g+6\mathrm{R}(X_g).\]
This will be guaranteed by Proposition \ref{prop:length_upper} based on \cite[Theorem 4.5.2]{buser2010geometry}. 
\subsection{Random hyperbolic surfaces}
The study of Weil-Petersson random hyperbolic surfaces was started in \cite{Mirz10, guth2011pants, mirzakhani2013growth}. This subject has become very active in recent years. Some surprising geometric properties and its connection to other subjects have been established. For geometric properties on this random surface model, one may refer to \cite{guth2011pants, mirzakhani2013growth, mirzakhani2019lengths, PWX22, MT22, nie2023large, dozier2023counting, he2023non, wu2025prime} and the references therein for related topics. For volume calculation and random theory on the moduli space of flat surfaces and their relation with the Weil-Petersson geometry on $\mathcal{M}_g$, see, for example, \cite{MZ15, aggarwal2020large, masur2024lengths}. 

We now view the quantity $\slfs(X_g)$ as a positive random variable on $\sM_g$. By \cite{gao2025shortest} and the Collar Lemma, it is known that
\[\lim\limits_{g\to \infty}\frac{\min\limits_{X_g\in \sM_g}\slfs(X_g)}{g}=8\arccosh\left(\sqrt{2}\right) \textit{ and } \sup\limits_{X_g\in \sM_g}\slfs(X_g)=\infty.\]
As an application of Theorem \ref{mt-1}, we show that as $g\to \infty$, a generic $X_g \in \sM_g$ has $\slfs(X_g)$ uniformly comparable to $g$. More precisely, let $\prob_{\mathrm{WP}}^{g}$ and $\mathbb{E}_{\mathrm{WP}}^{g}$ be the probability measure and expected value on $\sM_g$ induced by the Weil-Petersson metric, respectively. We prove
\begin{theorem}
	For any $C > 300$, 
	\[
		\lim_{g\to \infty} \prob_{\mathrm{WP}}^{g}\left(X_g \in \mathcal{M}_g; \ 7g < \mathcal{L}_{\sys}^{\mathrm{fill}}(X_g) < Cg\right) =1.
	\]
	\label{thm:wp_prob}
\end{theorem}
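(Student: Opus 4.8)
The plan is to derive the result from Theorem \ref{mt-1} by controlling the random variable $\mathrm{R}(X_g)$ and the systole on a Weil--Petersson random surface. The upper bound is the easier direction: by Theorem \ref{mt-1} we have $\slfs(X_g) \le 300g + 12\mathrm{R}(X_g)$, so it suffices to show that for any $\varepsilon > 0$, $\prob_{\mathrm{WP}}^{g}(\mathrm{R}(X_g) > \varepsilon g) \to 0$ as $g \to \infty$. I would prove this via Mirzakhani's integration formula: the expected number of primitive closed geodesics of length in a dyadic range $[2^{-k-1}, 2^{-k})$ (or any short-geodesic window) is controlled, and in fact $\mathbb{E}_{\mathrm{WP}}^{g}$ of the number of closed geodesics shorter than $1$ is bounded independently of $g$ — this is essentially in \cite{Mirz10, mirzakhani2013growth}. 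More carefully, one bounds $\mathbb{E}_{\mathrm{WP}}^{g}[\mathrm{R}(X_g)] = \mathbb{E}_{\mathrm{WP}}^{g}\big[\sum_{\ell(\gamma)<1}\log(1/\ell(\gamma))\big]$ by integrating $\log(1/\ell)$ against the expected density of short geodesics; since the Weil--Petersson volume asymptotics give an expected density $\asymp \ell$ near $\ell = 0$ (for simple non-separating curves, which dominate), the integral $\int_0^1 \log(1/\ell)\cdot \ell \, d\ell$ converges, yielding $\mathbb{E}_{\mathrm{WP}}^{g}[\mathrm{R}(X_g)] = O(1)$. Markov's inequality then gives $\prob_{\mathrm{WP}}^{g}(\mathrm{R}(X_g) > \varepsilon g) \to 0$, and hence $\prob_{\mathrm{WP}}^{g}(\slfs(X_g) < Cg) \to 1$ for any $C > 300$.

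For the lower bound $\slfs(X_g) > 7g$ with high probability, I would use the lower bound $\slfs(X_g) \ge \pi(g-1) + \mathrm{R}(X_g) \ge \pi(g-1)$ from Theorem \ref{mt-1}. Since $\pi > 3.14$, this alone gives $\slfs(X_g) \ge \pi(g-1)$, which is eventually larger than $3g$ but \emph{not} larger than $7g$. So the crude bound is insufficient, and one must do better. The natural fix is to combine the isoperimetric lower bound with a more refined count: a filling multi-geodesic $\gamma$ cuts $X_g$ into disks, and by Gauss--Bonnet the total area is $4\pi(g-1)$; pushing the isoperimetric estimate used in the lower bound of Theorem \ref{mt-1} harder (using that a filling multicurve must have many self-intersections — at least $2g-1$ or so, by an Euler characteristic count on the complementary disks) should upgrade the constant. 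Concretely, if $\gamma$ fills and cuts $X_g$ into $f$ disks with $v$ vertices and $e$ edges in the resulting CW structure, then $v - e + f = 2 - 2g$, each vertex has degree $4$ so $e = 2v$, giving $f = v + 2 - 2g$; combined with each face having area $\ge$ (something) and perimeter bounds from hyperbolic isoperimetry, one obtains $\ell(\gamma) = \tfrac12 \sum(\text{face perimeters}) \ge c\, g$ with an explicit $c$. The main obstacle I anticipate is squeezing the constant: getting from the easy $\pi(g-1)$ to a clean $7g$ requires either a sharper deterministic isoperimetric argument valid for \emph{all} surfaces, or — more likely, and probably the intended route — an almost-sure lower bound on the Weil--Petersson random surface itself, e.g. using that a typical $X_g$ has systole bounded below (Mirzakhani's result that $\prob(\sys(X_g) < \epsilon)$ is small) so that the complementary disks of any filling multicurve cannot be too thin, forcing extra length.

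Alternatively, and perhaps more robustly, I would lower-bound $\slfs(X_g)$ on a random surface directly by its relation to the number of geodesics needed to fill: a filling multi-geodesic restricted to a region of the surface still has to "see" enough topology. One can use the fact (Mirzakhani--Petri \cite{mirzakhani2019lengths}, or \cite{PWX22}) that on a WP-random surface the number of primitive closed geodesics of length $\le L$ is, for fixed $L$, asymptotically a Poisson random variable, hence $o(g)$; this means a short filling multi-geodesic would have to be a \emph{single} very long geodesic (or few of them), and a single closed geodesic of length $L$ can self-intersect at most $\sim L^2$ times while filling requires $\gtrsim g$ intersections, giving $L \gtrsim \sqrt{g}$ — too weak. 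So the count must instead come from the area/isoperimetry side. I would therefore structure the final argument as: (i) the upper bound via Mirzakhani's integration formula and Markov as above; (ii) the lower bound by refining the proof of the lower bound in Theorem \ref{mt-1} to track the number of complementary faces $f \ge g-1$ and applying the sharp hyperbolic isoperimetric inequality (a disk of area $A$ has perimeter $\ge \sqrt{A^2 + 4\pi A}$) face by face, then optimizing — since each face has area $\ge$ some definite amount when the total is $4\pi(g-1)$ split among $\lesssim$ linearly-many faces, the sum of perimeters is $\gtrsim 14g$, giving $\slfs \ge 7g$. The technical heart is verifying that the number of complementary regions is genuinely $\Theta(g)$ and not, say, just one giant disk — but a single disk of area $4\pi(g-1)$ has perimeter $\ge \sqrt{16\pi^2(g-1)^2 + 16\pi^2(g-1)} \ge 4\pi(g-1) > 12g$ for large $g$, so in fact the one-face case already beats $7g$; the worry is instead many small faces, and there the isoperimetric deficit $\sqrt{A^2+4\pi A} - A \approx 2\pi$ per face (for small $A$) times $f \approx g$ faces contributes an extra $\Theta(g)$, while the leading $\sum A_i = 4\pi(g-1)$ already contributes $\ge \tfrac12\sum\sqrt{A_i^2+4\pi A_i}\ge \tfrac12(4\pi(g-1)) + \text{deficit} = 2\pi(g-1) + \Theta(g) > 7g$. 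Making this last chain of inequalities fully rigorous and uniform in the face-size distribution is the step I expect to demand the most care.
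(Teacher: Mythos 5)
Your upper bound argument is correct and is essentially the paper's: apply Theorem~\ref{mt-1} to reduce to showing $\mathrm{R}(X_g) = o(g)$ with high probability, then bound $\mathbb{E}_{\mathrm{WP}}^g[\mathrm{R}]$ via Mirzakhani's integration formula and conclude with Markov. (The paper packages the expectation estimate as Lemma~\ref{lem:r_exp}, using \cite[Lemma 22]{nie2023large} for the short-length volume asymptotics, but the mechanism is the same.)

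The lower bound is where your proposal goes wrong. You correctly notice that the $\pi(g-1)$ bound from Theorem~\ref{mt-1} is too weak, but the repair you sketch does not work. Two concrete errors: (i) you claim the isoperimetric deficit $\sqrt{A^2+4\pi A}-A$ is $\approx 2\pi$ for small $A$, but in fact for small $A$ it is $\approx \sqrt{4\pi A}\to 0$; the deficit is close to $2\pi$ only when $A$ is large. (ii) You assert the one-face case ``already beats $7g$,'' but a single complementary disk of area $4\pi(g-1)$ gives $\slfs \geq \tfrac{1}{2}\cdot\mathrm{perim} \geq \tfrac{1}{2}\cdot 4\pi(g-1)=2\pi(g-1)\approx 6.28(g-1) < 7g$ — you appear to have dropped the factor $\tfrac{1}{2}$ coming from each edge bounding two faces. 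Moreover the number of complementary faces $f$ is \emph{not} $\Theta(g)$ in general: with $v$ degree-$4$ vertices and $e=2v$ edges, $f=v-2g+2$, which equals $1$ when $v=2g-1$, so no face-counting argument forces $f\gtrsim g$. In short, pure hyperbolic isoperimetry on the faces cannot close the gap from $2\pi(g-1)$ to $7g$.

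What the paper actually does for the lower bound is import the sharp deterministic minimum from \cite[Theorem 1.1]{gao2025shortest}:
\[
\min_{X_g\in\sM_g}\slfs(X_g)=(8g-4)\arccosh\left(\sqrt{2}\cos\left(\tfrac{\pi}{8g-4}\right)\right),
\]
whose right-hand side is asymptotically $8\arccosh(\sqrt{2})\,g\approx 7.05\,g$ and hence exceeds $7g$ for all large $g$, \emph{for every} $X_g\in\sM_g$. No probabilistic input is needed on this side; the lower bound in Theorem~\ref{thm:wp_prob} is deterministic. The constant $7$ is thus not accessible via the coarse isoperimetric reasoning in the proof of Theorem~\ref{mt-1} — it sits just below the sharp asymptotic constant $8\arccosh\sqrt{2}$ and genuinely requires the full strength of the minimization result of \cite{gao2025shortest} (which identifies the minimizer as half the perimeter of a regular right-angled $(8g-4)$-gon).
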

\noindent For the expected value, we prove the following.
\begin{theorem}
	For any $p >0$, there exist two positive constants $C_1 = C_1(p)$ and $C_2 = C_2(p)$ only depending on $p$ such that for $g$ large enough,  
	\[
		C_1 <	\frac{ \mathbb{E}_{\mathrm{WP}}^{g} \left[ (\mathcal{L}_{\sys}^{\mathrm{fill}})^p \right] }{g^p} <C_2. 
	\]
If $p=1$, $C_1 = 7$ and $C_2 > 300$ is arbitrary. 		
	\label{thm:wp_exp}
\end{theorem}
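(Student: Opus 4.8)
The plan is to deduce Theorem \ref{thm:wp_exp} from Theorem \ref{mt-1} together with known Weil-Petersson probabilistic estimates, splitting the argument into a lower bound and an upper bound for the $p$-th moment. By Theorem \ref{mt-1} we have $\pi(g-1) \le \slfs(X_g)$ pointwise, hence $\mathbb{E}_{\mathrm{WP}}^g[(\slfs)^p] \ge (\pi(g-1))^p$, which already gives a lower bound of the form $C_1 g^p$ for $g$ large, with $C_1$ approaching $\pi^p$; in the case $p=1$ one needs the sharper constant $7$, which should follow from combining the deterministic bound $\slfs(X_g)\ge \pi(g-1)$ on most of moduli space with the fact that $\pi(g-1) > 7g$ once $g$ is large enough (indeed $\pi > 7$ is false, so in fact one must instead use that the $\mathrm{R}(X_g)$ term contributes on a small-but-nonnegligible set, or simply note $\pi(g-1)/g \to \pi \approx 3.14$, which is $<7$ — so for $p=1$ the constant $7$ must come from a more careful probabilistic lower bound on $\slfs$, presumably via a lower bound on the systole-type contribution; I would look for a statement that a WP-random surface has, with probability bounded below, enough short geodesics or enough ``width'' to force $\slfs \ge 7g$, or alternatively invoke Theorem \ref{thm:wp_prob}, which already asserts $\slfs(X_g) > 7g$ with probability tending to $1$).

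Concretely, for the lower bound I would write $\mathbb{E}_{\mathrm{WP}}^g[(\slfs)^p] \ge (7g)^p \cdot \prob_{\mathrm{WP}}^g(\slfs(X_g) > 7g)$ and then apply Theorem \ref{thm:wp_prob}, which gives that this probability tends to $1$; hence for $g$ large the ratio $\mathbb{E}_{\mathrm{WP}}^g[(\slfs)^p]/g^p$ exceeds, say, $7^p \cdot \tfrac12$, and in the special case $p=1$ a slightly more careful accounting (using that the probability is $1-o(1)$ rather than just $\ge \tfrac12$) yields the clean constant $C_1 = 7$. This reduces the lower-bound half entirely to Theorem \ref{thm:wp_prob}.

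For the upper bound, the key is Theorem \ref{mt-1}'s inequality $\slfs(X_g) \le 300g + 12\mathrm{R}(X_g)$, so that
\[
	\mathbb{E}_{\mathrm{WP}}^g\left[(\slfs)^p\right] \le \mathbb{E}_{\mathrm{WP}}^g\left[\left(300g + 12\mathrm{R}(X_g)\right)^p\right] \le 2^{p-1}\left(300^p g^p + 12^p\,\mathbb{E}_{\mathrm{WP}}^g\left[\mathrm{R}(X_g)^p\right]\right),
\]
using convexity of $t \mapsto t^p$ for $p \ge 1$ (and for $0<p<1$ the even simpler subadditivity $(a+b)^p \le a^p + b^p$). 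Everything then comes down to showing $\mathbb{E}_{\mathrm{WP}}^g[\mathrm{R}(X_g)^p] = O(g^p)$, in fact $o(g^p)$ would suffice to make $C_2$ close to $300^p$ but $O(g^p)$ is all that is needed. To estimate this I would use Mirzakhani's integration formula together with the asymptotics of Weil-Petersson volumes to control the expected number and lengths of short geodesics: the expected value of $\sum_{\ell(\gamma)<1}\log(1/\ell(\gamma))$, and more generally its $p$-th moment, is governed by the density of primitive closed geodesics of length $<1$, which on a WP-random surface of genus $g$ is (in expectation) of order $O(1)$ per unit of length and the integral $\int_0^1 \log(1/\ell)\cdot (\text{density}) \, d\ell$ converges; summing over the (expected $O(g)$ many) pairs-of-pants-type contributions, or more precisely bounding the number of disjoint short geodesics by $3g-3$ deterministically, gives $\mathbb{E}_{\mathrm{WP}}^g[\mathrm{R}(X_g)] = O(g)$, and a similar computation with higher moments and the trivial bound that at most $3g-3$ geodesics are simultaneously short handles general $p$ after a Hölder or Jensen step.

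The main obstacle I expect is controlling $\mathbb{E}_{\mathrm{WP}}^g[\mathrm{R}(X_g)^p]$ for $p > 1$: while the first moment is a routine application of Mirzakhani's integration formula (the only subtlety being the logarithmic weight $\log(1/\ell)$ near $\ell = 0$, which is integrable), the higher moments involve expectations of $p$-fold products over tuples of short geodesics, so one must control correlations between short geodesics — potentially using the independence-type results for WP-random surfaces (e.g. that short geodesics behave like a Poisson point process in the $g \to \infty$ limit, as in Mirzakhani--Petri) or, more elementarily, using the deterministic constraint that there are at most $3g-3$ disjoint short curves to reduce to the first-moment estimate via the bound $\mathrm{R}(X_g)^p \le (3g-3)^{p-1}\sum_{\ell(\gamma)<1}(\log(1/\ell(\gamma)))^p$ followed by Mirzakhani integration applied to the single sum $\sum_{\ell(\gamma)<1}(\log(1/\ell(\gamma)))^p$, whose weight is again integrable near $0$. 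The latter route avoids correlation estimates entirely and should suffice for the stated $O(g^p)$ bound; the former would be needed only if one wanted the sharp constant. I would therefore pursue the elementary route: first establish $\mathbb{E}_{\mathrm{WP}}^g[\sum_{\ell(\gamma)<1}(\log(1/\ell(\gamma)))^p] = O(g)$ via Mirzakhani's formula and Weil-Petersson volume asymptotics, then combine with the $(3g-3)^{p-1}$ factor and the convexity step above to conclude, and finally assemble the two halves, noting that for $p=1$ the constants can be taken as $C_1 = 7$ (from Theorem \ref{thm:wp_prob}) and any $C_2 > 300$.
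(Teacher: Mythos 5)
Your upper-bound half is essentially the paper's own argument: apply the deterministic bound $\slfs \le 300g + 12\mathrm{R}$ from Theorem~\ref{mt-1}, split $(a+b)^p$ by convexity/subadditivity, and reduce to showing $\mathbb{E}_{\mathrm{WP}}^g[\mathrm{R}^p]=O_p(g^p)$ via the deterministic count of at most $3g-3$ short geodesics together with Mirzakhani's integration formula applied to the sum $\sum_{\ell(\gamma)<1}(\log(1/\ell(\gamma)))^p$ (whose weight is integrable against $t\,dt$ near $0$). This is precisely Lemma~\ref{lem:r_exp} and the subsequent algebra in the paper, so that part of your plan is sound.

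Your lower bound, however, has a real gap precisely at the claim you flag as needing ``a slightly more careful accounting.'' You bound $\mathbb{E}_{\mathrm{WP}}^g[\slfs] \ge 7g\cdot \prob_{\mathrm{WP}}^g(\slfs > 7g)$ and invoke Theorem~\ref{thm:wp_prob}, which only gives that the probability is $1-o(1)$. On the complementary bad set you have no pointwise bound better than $\slfs \ge 2\pi(g-1)$, and since $2\pi < 7$, the resulting estimate is $\mathbb{E}_{\mathrm{WP}}^g[\slfs] \ge 7g(1-o(1)) + 2\pi(g-1)\cdot o(1) < 7g$ for every finite $g$. No ``careful accounting'' within this framework can push the constant up to $7$. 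The paper instead uses a \emph{deterministic} input from \cite{gao2025shortest}: the minimum of $\slfs$ over all of $\mathcal{M}_g$ is $(8g-4)\arccosh\bigl(\sqrt{2}\cos(\pi/(8g-4))\bigr)$, which is asymptotic to $8\arccosh(\sqrt{2})\,g \approx 7.05\,g$, hence strictly greater than $7g$ for all $g$ large. This gives $\slfs(X) > 7g$ for \emph{every} $X\in\mathcal{M}_g$, and the inequality $\mathbb{E}_{\mathrm{WP}}^g[(\slfs)^p] > (7g)^p$ follows trivially with no probability theory at all. Replacing your appeal to Theorem~\ref{thm:wp_prob} with this pointwise bound (already quoted in the paper's introduction) closes the gap; note also that Theorem~\ref{thm:wp_prob} is itself proved from this same deterministic estimate, so your route is not merely roundabout but strictly weaker.
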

\noindent To prove Theorem \ref{thm:wp_prob} and Theorem \ref{thm:wp_exp}, we will apply Mirzakhani's integration formula \cite[Theorem 7.1]{mirzakhani2007simple} to obtain an effective estimate of the expected value $\mathbb{E}_{\mathrm{WP}}^{g} \left[ (\mathrm{R}(X_g))^p \right]$, where $\mathrm{R}(X_g)$ is as in Theorem \ref{mt-1}. See Lemma \ref{lem:r_exp}.\\

We also study the behavior of $\mathcal{L}_{\sys}^{\mathrm{fill}}$ in the Brooks-Makover model \cite{brooks2004random} of random hyperbolic surfaces. Firstly, we briefly recall the construction of random hyperbolic surfaces in the Brooks-Makover model \cite{brooks2004random}. For $N\geq 2$, take $2N$ hyperbolic ideal triangles and glue them together into a cusped hyperbolic surface, where the gluing scheme is given by a random oriented trivalent graph. The glue is required to have zero shearing coordinates. Denote the gluing pattern as $\omega$ and the set of all the gluing patterns as $\Omega_{N}$. For $\omega\in \Omega_N$, the cusped hyperbolic surface obtained by gluing ideal triangles following the pattern $\omega$ is denoted by $S_O(\omega)$. Its compactification by filling in all the cusps is denoted by $S_C(\omega)$. The set $\Omega_N$ is finite, so define the probability of its subset $B\subset \Omega_N$ as 
\[
	\prob_{\mathrm{BM}}^N(B) = \frac{|B|}{|\Omega_N|}. 
\]
One may see \emph{e.g.} \cite{gamburd2006poisson, petri2015random, budzinski2021diameter, SW23, liu2023random} for more details of this random model. Our result on $\slfs$ for this random hyperbolic surface model is as follows.
\begin{theorem}\label{mt-3}
	For any $\omega \in \Omega_N$, 
	\[
	  \pi N \le  \slfs (S_O(\omega)) \le 6N. 
	\]
For the compact case,
\[\lim\limits_{N\to \infty}\prob_{\mathrm{BM}}^N \left( \omega\in \Omega_N; \ \frac{7}{2}N <  \mathcal{L}^{\mathrm{fill}}_{\mathrm{sys}} (S_C(\omega)) < 6N \right) = 1.\]
\end{theorem}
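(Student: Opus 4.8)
The plan is to treat the two parts separately but with a common thread: in the Brooks-Makover model, the underlying cusped surface $S_O(\omega)$ is glued from $2N$ ideal triangles, so its topology and much of its geometry are rigid, while the compact surface $S_C(\omega)$ differs from $S_O(\omega)$ only in a controlled way near the cusps. For the first part, I would first compute the topological type of $S_O(\omega)$: gluing $2N$ ideal triangles along a trivalent gluing scheme produces a surface with $3N$ edges and some number $c$ of cusps, and an Euler characteristic count gives $\chi = 2N - 3N = -N$, hence (together with the cusps) a surface of genus roughly $N/2$. The lower bound $\pi N \le \slfs(S_O(\omega))$ should come from a Gauss-Bonnet / isoperimetric argument exactly parallel to the lower bound in Theorem \ref{mt-1}: a filling multi-geodesic cuts $S_O(\omega)$ into disks, and the total area is $-2\pi\chi = 2\pi N$ by Gauss-Bonnet, while each complementary disk of perimeter $p$ has area at most... well, in the cusped/ideal setting one uses that an embedded disk of perimeter $p$ has area at most $p$ (the linear isoperimetric inequality in $\mathbb{H}^2$ applied to the universal cover), so $\ell \ge \mathrm{Area}/1 \cdot (\text{constant})$; matching the constant to get exactly $\pi N$ will need the two-sided counting (each edge of the multi-geodesic bounds two complementary regions), giving $2\ell \ge 2\pi N$ modulo the precise isoperimetric constant. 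For the upper bound $\slfs(S_O(\omega)) \le 6N$, the natural candidate filling multi-geodesic is the one coming from the edges of the ideal triangulation itself, or rather its geodesic representative: the $3N$ edges form a filling graph (their complement is $2N$ ideal triangles, which are disks), and I would either take the associated dual $4$-valent graph as in Section \ref{sec:dual_gph} or directly estimate the length of a filling multi-geodesic isotopic to the $1$-skeleton. Since each ideal triangle has "size" bounded in a scale-invariant way and there are $2N$ of them, a filling curve of length $O(N)$ exists; pinning the constant to $6$ will require a concrete construction, presumably a slight variant of the explicit graph from \cite[Theorem 4.5.2]{buser2010geometry} adapted to the ideal-triangle gluing, or a curve that crosses each of the $3N$ edges a bounded number of times with each crossing arc of length $\le 2$.

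For the second part (the compact surface $S_C(\omega)$), the strategy is to combine Theorem \ref{mt-1} with the known geometry of Brooks-Makover surfaces. The compactification $S_C(\omega)$ has genus $g(\omega)$ with $g(\omega)$ comparable to $N$ (in fact $2g-2 \approx N - c$, and by Brooks-Makover the number of cusps $c$ is $o(N)$ with probability tending to $1$ as $N \to \infty$, or more precisely $c/N$ concentrates), so $\pi(g-1) \asymp \pi N /2$ up to lower order terms. Theorem \ref{mt-1} then gives $\slfs(S_C(\omega)) \le 300 g(\omega) + 12\,\mathrm{R}(S_C(\omega))$ and $\slfs(S_C(\omega)) \ge \pi(g(\omega)-1) + \mathrm{R}(S_C(\omega))$, so everything reduces to two facts about the Brooks-Makover model: (i) $g(\omega)/N \to$ a definite constant (with high probability), which is exactly the Euler-characteristic count above together with control on the cusp count, and (ii) $\mathrm{R}(S_C(\omega))$ is small — ideally $o(N)$ — with probability tending to $1$. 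Fact (ii) is the key input: one needs that with high probability $S_C(\omega)$ has no very short geodesics, or at worst few short geodesics whose $\log(1/\ell)$ contributions sum to $o(N)$. This should follow from the results of Brooks-Makover (and refinements, e.g. Mirzakhani-type or the spectral-gap/short-geodesic estimates in the references \cite{petri2015random, SW23, liu2023random}) showing that the systole of $S_C(\omega)$ stays bounded below and short geodesics are rare; I would cite these and run a first-moment estimate on $\sum_{\ell(\gamma)<1}\log(1/\ell(\gamma))$ analogous to Lemma \ref{lem:r_exp} for the Weil-Petersson case.

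Putting these together: with high probability $g(\omega) = N/2 + o(N)$ and $\mathrm{R}(S_C(\omega)) = o(N)$, so the upper bound from Theorem \ref{mt-1} gives $\slfs(S_C(\omega)) < 300 \cdot (N/2 + o(N)) + o(N) < 6N$ for $N$ large — here I should double-check the constant: $300 \cdot N/2 = 150N$, which is far larger than $6N$, so the constant $6$ in the statement cannot come from blindly applying Theorem \ref{mt-1}. This tells me the upper bound $6N$ for $S_C(\omega)$ must instead be inherited from the \emph{explicit} construction used for $S_O(\omega)$: a filling multi-geodesic of $S_O(\omega)$ of length $\le 6N$ persists, after filling in the cusps, as a filling multi-geodesic of $S_C(\omega)$ of length at most (slightly more than) $6N$, since filling a cusp only shrinks lengths of curves that stay away from the cusp and the construction can be arranged to do so; so the upper bound for the compact case follows directly from the upper bound for the open case plus a comparison of lengths under cusp-filling, not from Theorem \ref{mt-1}. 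The lower bound $\frac{7}{2}N < \slfs(S_C(\omega))$ should then come from the lower bound in Theorem \ref{mt-1}, $\pi(g-1) + \mathrm{R} \ge \pi(g-1)$, combined with $g(\omega) = N/2 + o(N)$ with high probability: $\pi(N/2 - 1 + o(N)) > \frac{7}{2}N$ would need $\pi/2 > 7/2$, which is false, so in fact one must use the full strength $\pi(g-1) + \mathrm{R}(S_C(\omega))$ together with a lower bound $\mathrm{R}(S_C(\omega)) \ge (\text{something comparable to } N)$ — i.e. Brooks-Makover surfaces \emph{do} have many short geodesics after compactification (the filled-in cusps become short geodesics), contributing a definite positive multiple of $N$ to $\mathrm{R}$. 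Pinning this down — showing $\pi(g-1) + \mathrm{R}(S_C(\omega)) > \frac{7}{2}N$ with high probability — is the main obstacle, and it will hinge on a precise accounting of how many cusps get filled and how short the resulting geodesics are, for which I would invoke the quantitative cusp-count and short-geodesic-length estimates from \cite{brooks2004random} and its successors.
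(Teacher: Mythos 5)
Your proposal has the right global shape but misses the actual mechanisms at several crucial points, and each miss is a genuine gap.

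For the upper bound on $\slfs(S_O(\omega))$, your candidate --- ``the edges of the ideal triangulation itself, or rather its geodesic representative'' --- cannot work: the edges of an ideal triangle are complete geodesics of infinite length, not arcs of a compact filling multi-curve, so ``estimate the length of a filling multi-geodesic isotopic to the $1$-skeleton'' has no content. You gesture at the dual $4$-valent graph idea, which is closer, but you never produce the construction, and you fall back on a speculative $O(N)$ estimate. The paper's actual construction is concrete and is where the constant $6$ comes from: inside each ideal triangle, join the three points where the inscribed circle touches the sides, forming a compact triangle of side $2\arcsinh\frac{1}{2}$; because the gluing has zero shearing, these tangency points on shared edges agree, so the union of these small triangles is a closed geodesic multi-curve. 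Its total length is $2N\cdot 3\cdot 2\arcsinh\frac{1}{2}\approx 5.78N<6N$, and its complement consists of triangles and cusped disks, so it fills. Without the zero-shearing observation you cannot even get a closed geodesic out of these arcs, so this is not an incidental detail.

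For the lower bound $\frac{7}{2}N<\slfs(S_C(\omega))$, you correctly notice that $\pi(g-1)$ is too weak, but then propose to close the gap by lower-bounding $\mathrm{R}(S_C(\omega))$ by a multiple of $N$. This is a substantially harder route than what the paper uses, and I do not see how to make it give exactly the needed constant. The paper instead invokes \cite[Theorem 1.1]{gao2025shortest}, which is a sharp, deterministic lower bound $\slfs(X_g)\ge (8g-4)\arccosh\bigl(\sqrt{2}\cos\frac{\pi}{8g-4}\bigr)>7g$ for every $X_g\in\mathcal M_g$ and large $g$; combined with the Brooks--Makover fact that $\mathrm{genus}(S_C(\omega))=1+\frac{N}{2}+O(\log N)$ with probability tending to $1$, this gives $\slfs(S_C(\omega))>7\cdot\frac{N}{2}$ directly, with no $\mathrm{R}$ term. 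You cite this theorem nowhere and rely instead on Theorem \ref{mt-1}, whose lower bound is deliberately non-sharp and cannot reach $\frac{7}{2}N$ from genus alone.

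For the upper bound on $\slfs(S_C(\omega))$, your high-level instinct is right --- transfer the explicit multi-geodesic from $S_O(\omega)$ to $S_C(\omega)$ and bound its length --- but the claim that ``filling a cusp only shrinks lengths of curves that stay away from the cusp and the construction can be arranged to do so'' elides the real difficulty, which the paper explicitly flags in the closing remark: curves in minimal position (hence filling) on $S_O(\omega)$ need not remain in minimal position on $S_C(\omega)$, so one cannot simply ``transfer'' fillingness. The paper's fix is Proposition \ref{prop:ess_tri_mid_gph}: if every vertex of a geodesic triangulation of a closed surface has degree at least $6$, then its dual $4$-valent graph is in minimal position and isotopic to a filling multi-geodesic. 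The ideal triangulation of $S_O(\omega)$ induces a geodesic triangulation $\mathcal T$ of $S_C(\omega)$; with probability tending to $1$ all cusps of $S_O(\omega)$ are ``large'' \cite[Theorem 2.1]{brooks2004random}, which forces each vertex degree of $\mathcal T$ to exceed $6$; Proposition \ref{prop:ess_tri_mid_gph} then certifies that $\mathcal D(\mathcal T)$ is filling in $S_C(\omega)$; and the Schwarz--Pick lemma (applied to the holomorphic inclusion $S_O(\omega)\hookrightarrow S_C(\omega)$) gives the length bound $<6N$. The degree condition and Proposition \ref{prop:ess_tri_mid_gph} are not optional polish --- they are the step that makes the transfer legitimate, and they are absent from your proposal.

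You also briefly consider estimating $\mathrm{R}(S_C(\omega))$ by a first-moment argument analogous to Lemma \ref{lem:r_exp}; no such estimate is needed or used in the paper's proof, and in the Brooks--Makover setting it would be an extra project in itself.
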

\noindent Here, a closed multi-geodesic is called \emph{filling} in a cusped hyperbolic surface if each component of its complement is homotopic to either a point or a circle around a cusp. For the proof of the compact case of Theorem \ref{mt-3}, it relies on the following proposition that is of its own interest.   
\setcounter{section}{3}
\setcounter{theorem}{13}
\begin{proposition}
	Let $X \in \mathcal{M}_g$ and $\mathcal{T}$ be a geodesic triangulation of $X$. If the degree of each vertex of $\mathcal{T}$ is at least $6$, then its $4$-valent dual graph $\mathcal{D}(\mathcal{T})$ is isotopic to a filling closed multi-geodesic in $X$.  
	\label{prop:ess_tri_mid_gph_intro}
\end{proposition}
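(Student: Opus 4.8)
The plan is to mimic the strategy outlined in the excerpt for the dual graph $\mathcal{D}(G)$ of a length-minimizing filling graph (Proposition \ref{prop:ess_mid_gph}), but now exploiting the large-degree hypothesis on the triangulation $\mathcal{T}$ in place of the angle condition that came from length minimality. The dual graph $\mathcal{D}(\mathcal{T})$ is built by joining midpoints of adjacent edges of each triangle by geodesic arcs; since each triangle of $\mathcal{T}$ contributes a small triangular "corner" region and a central hexagon-ish piece, $\mathcal{D}(\mathcal{T})$ decomposes $X$ into: (i) small triangles near the vertices of $\mathcal{T}$'s triangles, (ii) regions around each vertex $v$ of $\mathcal{T}$ (a polygon with one side per triangle incident to $v$), and (iii) the complementary pieces. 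The key point is that each curve of $\mathcal{D}(\mathcal{T})$ is $4$-valent and that every complementary region is a geodesic polygon whose corners are the midpoint-vertices, at which the curves of $\mathcal{D}(\mathcal{T})$ cross transversally.

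First I would set up the Gauss–Bonnet bookkeeping exactly as in the proof of Proposition \ref{prop:ess_mid_gph}. To show no curve of $\mathcal{D}(\mathcal{T})$ bounds an embedded disk, no two curves (or one curve with itself) form an immersed monogon or bigon, it suffices to rule out, on $X$, the existence of a geodesic polygon with at most two corners bounded by arcs of $\mathcal{D}(\mathcal{T})$; by Gauss–Bonnet such a region would have $\area = (k-2)\pi + \sum(\pi - \theta_i) - \sum(\text{exterior turning})$, and since every arc is geodesic and every corner angle at a crossing is determined by the geometry of a single triangle of $\mathcal{T}$, one gets a contradiction with $\area > 0$ once $k \le 2$, provided the corner angles are not too small. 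This is where the degree hypothesis enters: the angle that $\mathcal{D}(\mathcal{T})$ makes inside the small vertex-triangles, and the angles of the polygon around a vertex $v$, are controlled by the angles of the triangles of $\mathcal{T}$ at $v$, and the condition $\deg(v) \ge 6$ forces the relevant angle sums to be large enough (roughly, average angle $\le \pi/3$ at each vertex) that a monogon or bigon would have non-positive area. I would carry this out region-type by region-type: monogon, then bigon, then disk (a disk forces a monogon or bigon inside by an innermost-curve argument), concluding that no curve is contractible and the curves are pairwise in minimal position (Hass–Scott: minimal position is equivalent to no innermost bigons or monogons).

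Next I would argue the filling property. Every complementary region of $\mathcal{D}(\mathcal{T})$ in $X$ is, by the construction, either one of the small triangles near a vertex of a triangle of $\mathcal{T}$ (a topological disk) or a polygon surrounding a single vertex $v$ of $\mathcal{T}$ (again a topological disk, since $v$ is a point of $X$, not a cusp, in the closed case $X \in \mathcal{M}_g$). Hence $\mathcal{D}(\mathcal{T})$ cuts $X$ into disks, i.e. it is filling. Since $\mathcal{D}(\mathcal{T})$ is a union of essential simple closed curves in minimal position, it is isotopic to its geodesic representative; by the bigon criterion the geodesic representative remains filling (filling is an isotopy invariant for multicurves) and remains a closed multi-geodesic. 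This gives the last sentence of the proposition.

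The main obstacle I expect is the angle computation that makes Gauss–Bonnet bite: one must show precisely that when every vertex of $\mathcal{T}$ has degree $\ge 6$, every potential monogon or bigon bounded by $\mathcal{D}(\mathcal{T})$ has area $\le 0$. The subtlety is that the corners of such a region sit inside the small vertex-triangles of individual triangles of $\mathcal{T}$, so their angles depend on the (possibly very uneven) individual triangle angles, and one must add up contributions over all triangles meeting the region and over all vertices it encircles, using $\sum_{\text{triangles at }v}(\text{angle at }v) = 2\pi$ together with $\deg(v)\ge 6$ to bound each vertex's total contribution. Getting a clean uniform inequality here — rather than one that degenerates when some triangle of $\mathcal{T}$ is very thin — is the technical heart; I would handle it by working with the turning angles of $\mathcal{D}(\mathcal{T})$ directly (each turning angle at a crossing equals $\pi$ minus an angle of a triangle of $\mathcal{T}$), so that the total turning around a monogon or bigon is expressed purely in terms of triangle angles and the count of encircled vertices, after which $\deg \ge 6$ yields the needed sign.
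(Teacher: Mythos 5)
Your plan to rule out disks, monogons, and bigons bounded by $\mathcal{D}(\mathcal{T})$ via Gauss--Bonnet applied directly to those regions is not what the paper does, and it has a genuine gap. In the paper's earlier Proposition~\ref{prop:ess_mid_gph_univ} the Gauss--Bonnet computation is viable precisely because length-minimality of $G_0$ pins down every corner angle of the polygon bounded by $\mathcal{D}(G_0)$ to be exactly $\tfrac{2}{3}\pi$ or $\tfrac{4}{3}\pi$. For an arbitrary geodesic triangulation with $\deg\ge 6$ there is no such pointwise angle control: the corners of a putative monogon or bigon sit at midpoints of edges of $\mathcal{T}$, and the turning angle at such a midpoint depends on the shapes of \emph{both} triangles adjacent to that edge (moreover, in hyperbolic geometry the medial triangle's angles are not equal to, nor any simple function of, the parent triangle's angles). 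So your assertion that ``each turning angle at a crossing equals $\pi$ minus an angle of a triangle of $\mathcal{T}$'' is not correct, and the identity $\sum_{\text{triangles at }v}(\text{angle at }v)=2\pi$ together with $\deg(v)\ge 6$ only constrains the \emph{average} angle at a vertex of $\mathcal{T}$, not the angles of $\mathcal{D}(\mathcal{T})$ at the midpoints where the corners actually lie. You flag exactly this worry (``degenerates when some triangle of $\mathcal{T}$ is very thin'') but offer no way around it; as written the argument does not close.

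The paper (Proposition~\ref{prop:ess_tri_mid_gph_univ}) avoids all metric angle estimates and replaces Gauss--Bonnet by a purely combinatorial Euler-characteristic count. One lifts to $\widetilde{X}$ (via Lemma~\ref{lem:lift_embed}, which reduces to \emph{embedded} disks/monogons/bigons) and lets $D$ be the union of triangles of $\widetilde{\mathcal{T}}$ fully contained inside the region, with $n$ boundary vertices. Combining $|V|-|E|+|F|=1$, the doubling relation $3|F|=2|E|-n$, and $\sum_v\deg(v)=2|E|$ yields $|V|-\tfrac{1}{3}n-\tfrac{1}{6}\sum_v\deg(v)=1$; then $\deg(v)\ge 6$ for interior vertices, $\ge 4$ for generic boundary vertices, and a reduced bound ($\ge 3$ or $\ge 2$) at the one or two corner vertices produce the contradiction $1\le 0$ (up to $1\le\tfrac{2}{3}$ in the worst bigon case). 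This combinatorial step is the missing ingredient you need. With it in place, the rest of your outline --- bigon/monogon criterion for minimal position, complementary regions of $\mathcal{D}(\mathcal{T})$ being medial triangles and vertex polygons hence disks, isotopy to a filling geodesic representative --- lines up with Lemmas~\ref{lem:mid_gph_incontractible_tri}, \ref{lem:mid_gph_min_pos_tri} and Proposition~\ref{prop:fill_geod_tri}.
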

\setcounter{section}{1}
\setcounter{theorem}{5}

\subsection*{Plan of the paper}
This paper is organized as follows. In Section~\ref{sec:pre}, some preliminaries are provided. 
In Section~\ref{sec:dual_gph}, we prove that the dual $4$-valent graph of $G$ is in minimal position and isotopic to a filling closed multi-geodesic of the surface when $G$ is the filling graph with minimal length or $G$ is a geodesic triangulation with degree $ \ge 6$ at every vertex. In Section \ref{sec-t-1}, we prove Theorem \ref{mt-1}. In Section \ref{s-rs}, we prove Theorems \ref{thm:wp_prob}, \ref{thm:wp_exp}, and \ref{mt-3}. 

\subsection*{Acknowledgement}
The authors are all grateful to Yuxin He for helpful discussions. Y.~G. is supported by NSFC grant No. 12301082. Z.~W. is supported by NSFC grant No. 125B2006. Y.~W. is partially supported by supported by National Key R \& D Program of China (2025YFA1017500) and NSFC grants No. 12361141813 and 12425107. 

\section{Preliminaries}
\label{sec:pre}

In this section, we provide some necessary background on two-dimensional hyperbolic geometry and surface topology.

\subsection{Filling graphs with minimal length on hyperbolic surfaces}\label{sec:short_gph}

Take a closed hyperbolic surface $X \in \mathcal{M}_g$. Consider a connected graph $G=(V,E)$ embedded in $X$, whose any pair of vertices is joined by at most one edge and whose degree at any vertex is $ \ge 3$. 
We say that $G$ {\em fills} $X$ if $X \backslash G$ consists of topological disks. 
If any $e\in E$ is a rectifiable curve, then one may define the length of $G$ as
\[
	\ell(G) \overset{\mathrm{def}}{=} \sum_{e\in E}\ell(e). 
\]
We say that a graph $G \subset X$ is \emph{geodesic}, if $e\in E$ is a geodesic segment. 
For $X \in \mathcal{M}_g$, define
\[
	\mathcal{G} = \mathcal{G}_X \overset{\mathrm{def}}{=} \{ G \subset X \text{ a finite filling geodesic graph} \}. 
\] 
The following proposition concerns the graph in $\mathcal{G}$ with minimal length, which is a direct consequence of \cite[Theorem 1.2(3)]{martelli2015spines} by Martelli-Novaga-Pluda-Riolo.
\begin{proposition}
There exists an embedded graph $G_0\in \mathcal{G}$ such that $$\ell(G_0)=\min\limits_{G \in \mathcal{G}} \ell(G).$$ 

	Moreover, for such $G_0$, 
	\begin{enumerate}
		\item $G_0$ is trivalent. \label{item:trivalent}
		\item At each vertex, the angle between any two edges equals $ \frac{2}{3} \pi$. \label{item:hyperplane}

		\item $X \backslash G_0$ consists of exactly one topological disk.
	\end{enumerate}
	\label{prop:min_prop}
\end{proposition}

For $X\in \sM_g$, as in \cite{martelli2015spines}, an embedded finite graph $G \subset X$ is called a \emph{spine} if the complement $X \backslash G$ of $G$ is a topological disk. The following result is a special case of \cite[Theorem 1.2(3)]{martelli2015spines} in which $X$ is not necessarily to be hyperbolic.
\begin{theorem}[{\cite[Theorem 1.2(3)]{martelli2015spines}}]
	Any $X\in \mathcal{M}_g$ has a spine $\Gamma$ of minimal length. The spine $\Gamma$ is a trivalent geodesic graph. At each vertex, the angle between any two edges equals $ \frac{2}{3} \pi$.
	\label{thm:spine}
\end{theorem}

Before proving Proposition \ref{prop:min_prop}, we first show the following lemma.
\begin{lemma}
	For any $G \in \mathcal{G}_X$, there is a spine $\Gamma$ such that 
	\[
		\ell(\Gamma) \le \ell(G). 
	\]
	\label{lem:spine_compare}
\end{lemma}
\begin{proof}
	If $G$ cuts $X$ into only one disk, then the lemma follows immediately since $G$ itself is a spine. 

	If $G$ cuts $X$ into more than one disks, then there is at least one edge of $G$, on whose two sides are different disks. As illustrated in Figure \ref{fig:two_disk}, the edge $e$ seperates the disks $D_1$ and $D_2$. If we remove $e$, then we get a new graph $G_1$, a filling piecewise geodesic graph with smaller length and less faces than $G$. Continuously removing edges that are adjacent to two distinct faces, one may get a piecewise geodesic filling graph with exactly one face. Perturbing this graph to a smooth filling graph, one may get the spine $\Gamma$, whose length is smaller than $G$. The proof is complete. 
\end{proof}
\begin{figure}[htbp]
	\centering
	\begin{subfigure}[htbp]{.45\textwidth}
	\begin{center}
		\includegraphics{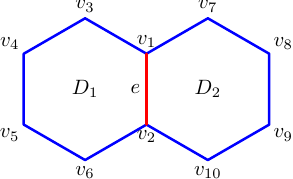}
	\end{center}
	\caption{$D_1$, $D_2$ in $G$}
	\label{fig:two_disk_1}
	\end{subfigure}
	\begin{subfigure}[htbp]{.45\textwidth}
	\begin{center}
		\includegraphics{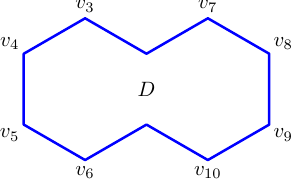}
	\end{center}
	\caption{$D$ in $G_1$}
	\label{fig:two_disk_2}
	\end{subfigure}
	
	\caption{Modification near a common edge of two disks}
	\label{fig:two_disk}
\end{figure}	

Now we are ready to prove Proposition \ref{prop:min_prop}.
\begin{proof}[Proof of Proposition \ref{prop:min_prop}]
	By Lemma \ref{lem:spine_compare}, for any graph $G\in\mathcal{G}$, there is a spine $\Gamma$ such that $\ell(\Gamma) \le \ell(G)$. By Theorem \ref{thm:spine}, the spine with minimal length is a graph $G_0$ in $\mathcal{G}$, hence $G_0$ is the graph in $\mathcal{G}$ with minimal length. The graph $G_0$ is trivalent, cuts the surface into one face. At each vertex of $G_0$, the angle between any two edges equals $ \frac{2}{3}\pi$. The proof is complete. 
\end{proof}

\subsection{Trigon decomposition}
Let us begin with the following definition.
\begin{definition}[{\cite[Definition 4.5.1]{buser2010geometry}}]
	In a closed hyperbolic surface $X$, a {\em trigon} is defined as either an embedded geodesic triangle in $X$ or an annulus in $X$ whose one boundary component is a simple closed geodesic, and the other boundary component consists of two geodesic arcs (see Figure \ref{fig:trigon}). In the latter case, the closed geodesic and the two arcs are the {\em sides} of the trigon. 
	
	For any trigon, its {\em perimeter} is the sum of the lengths of its three sides. 
	\label{def:trigon}
\end{definition}

\begin{figure}[htbp]
	\centering
	\includegraphics{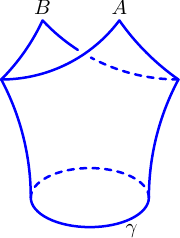}
	\caption{Annulus shaped trigon}
	\label{fig:trigon}
\end{figure}

\begin{theorem}[{\cite[Theorem 4.5.2]{buser2010geometry}}]
	Any closed hyperbolic surface admits a trigon decomposition such that all trigons have sides length $ \le \log 4$ and area between $0.19$ and $1.36$. In the construction of trigon decomposition, any simple closed geodesic with length $ \le \log 4$ is a side of an annulus-shaped trigon and has a neighborhood consisting of a pair of annulus-shaped trigons. 
	\label{thm:buser_trigon}
\end{theorem}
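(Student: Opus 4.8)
The plan is to build the decomposition in two stages: first isolate the thin part carried by short geodesics, then triangulate what remains; this follows the general lines of \cite[Ch.~4]{buser2010geometry}. The value $\log 4$ enters because, to leading order, the standard collar of a geodesic $\gamma$ is exactly the locus where the shortest loop freely homotopic to $\gamma$ has length $\le\log 4$. If the systole of $X$ exceeds $\log 4$ the first stage is empty and one goes directly to the triangulation.

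\emph{Stage 1 (the thin part).} Let $\Gamma$ be the set of closed geodesics of $X$ of length $\le\log 4$. Since $\log 4$ is below the length of any closed geodesic with a self-intersection, and below the length forced on a simple closed geodesic that meets another such geodesic of length $\le\log 4$, every element of $\Gamma$ is simple and distinct elements are disjoint, with pairwise disjoint standard collars (Collar Lemma, \cite[Theorem~4.4.6]{buser2010geometry}). Fix $\gamma\in\Gamma$ with $\ell(\gamma)=\ell$ and use Fermi coordinates $(\rho,t)$ on its collar, in which the metric is $d\rho^2+\cosh^2\!\rho\,dt^2$, $t\in\mathbb{R}/\ell\mathbb{Z}$, $|\rho|\le w(\ell):=\arcsinh\!\big(1/\sinh(\ell/2)\big)$. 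Pick a radius $d=d(\ell)\le w(\ell)$ and the two geodesic arcs from $(d,0)$ to $(d,\ell/2)$ lying in $\{0\le\rho\le d\}$ (one going each way around $\gamma$); together with $\gamma$ they bound an embedded annulus-shaped trigon $T_\gamma^{+}$ in the sense of Definition~\ref{def:trigon}, whose sides are $\gamma$ and these two arcs. Gluing $T_\gamma^{+}$ to the mirror trigon $T_\gamma^{-}$ along $\gamma$ produces an embedded annulus with $\gamma$ in its interior: the required neighborhood. The radius $d(\ell)$ must be chosen — large when $\ell$ is small, moderate when $\ell$ is near $\log 4$ — so that simultaneously the two arcs stay inside the collar and miss $\gamma$, each side has length $\le\log 4$, and $\area(T_\gamma^{\pm})\in[0.19,1.36]$; all three reduce to explicit hyperbolic trigonometry in the Fermi metric together with $\sinh w(\ell)=1/\sinh(\ell/2)$. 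This already yields the ``moreover'' clause: $\gamma$ is a side of the annulus-shaped trigons $T_\gamma^{\pm}$, which form its neighborhood.

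\emph{Stage 2 (the remainder).} Let $Y:=X\setminus\bigcup_{\gamma\in\Gamma}\mathrm{int}(T_\gamma^{+}\cup T_\gamma^{-})$; it is a compact surface whose boundary consists of the geodesic bigons above, with injectivity radius bounded below by an absolute constant outside the collars of $\Gamma$ and, inside a collar, a thin ring of bounded geometry between $\{\rho=d(\ell)\}$ and $\{\rho=w(\ell)\}$. Triangulate $Y$ by a sufficiently fine geodesic triangulation with uniformly fat triangles — a standard construction via a well-spaced net and a (constrained) Delaunay-type rule that contains the bigon vertices, with the mesh capped by a constant and scaled down to match the injectivity radius where that is small: being separated at the local scale makes every side comparable to that scale (hence $\le\log 4$), being dense at that scale bounds each circumradius by it, and for a fine mesh these force every triangle to be embedded with area comparable to the square of its local mesh size. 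This area is $\ge 0.19$ except near the bigon seams, where the small injectivity radius forces a finer mesh; there one merges each offending cell (together with the adjacent collar ring) with one or two neighbors and re-cuts the resulting bounded-geometry region into admissible trigons of side $\le\log 4$. Collecting the $T_\gamma^{\pm}$ and the triangles of $Y$ gives the decomposition.

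\emph{Main obstacle.} The topology is routine; the difficulty is entirely quantitative and concentrated in the lower area bound $0.19$, which forbids slivers and tiny triangles everywhere — in the peripheral collar rings, along the thin--thick seams, and wherever the net is locally fine. The constants are tight: a near-equilateral triangle of the maximal admissible side has area well under $1.36$, while a triangle of area $0.19$ already needs a side close to $0.7$, which exceeds the injectivity radius along the bigon seams, so a naive triangulation there is impossible. Making all the inequalities hold at once forces the width function $d(\ell)$, the net scale, and the merging rule to be chosen compatibly, and the verification is a chain of explicit hyperbolic estimates (law of cosines, Fermi-coordinate area integrals, the collar width $w(\ell)$) of the kind carried out systematically in \cite[Ch.~4]{buser2010geometry}. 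A secondary nuisance is global consistency: the Stage-1 and Stage-2 pieces must meet edge-to-edge and vertex-to-vertex along $\partial Y$, which is why the bigon vertices are built into the net.
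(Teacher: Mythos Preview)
The paper does not prove this statement; it is quoted from \cite[Theorem~4.5.2]{buser2010geometry} as a preliminary, so there is no argument in the paper itself to compare your proposal against.

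Judged on its own, your two-stage outline is in the right spirit but Stage~2 is not a proof. The sentence ``one merges each offending cell \dots\ and re-cuts the resulting bounded-geometry region into admissible trigons'' is exactly the assertion that needs to be established, and nothing in your description prevents the re-cutting from producing new slivers or the process from failing to terminate with all three constraints (side $\le\log 4$, area $\ge 0.19$, area $\le 1.36$) simultaneously satisfied. You flag this yourself as the main obstacle, but flagging it is not the same as overcoming it. For comparison, Buser's actual construction is single-stage: he chooses a maximal $(\log 2)$-separated net $\{p_i\}$ on all of $X$ and joins every pair at distance $\le\log 4$ by each geodesic arc of length $\le\log 4$ in a distinct relative homotopy class. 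The annular trigons near a short $\gamma$ then appear automatically, and the specific vertex-to-$\gamma$ distance recorded in \eqref{rmk:trigonlength}, namely $w(\ell)-\tfrac12\log 2$, is not a free parameter $d(\ell)$ to be tuned but is forced because the vertex is a net point sitting at packing radius $\tfrac12\log 2$ from the collar boundary. The lower area bound $0.19$ in Buser's argument comes from a uniform lower bound on the injectivity radius at net points combined with Gauss--Bonnet, not from any merging-and-re-cutting step.
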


\begin{remark}
	By the Collar Lemma, the set of all closed geodesics with length $ \le \log 4$ in a hyperbolic surface consists of pairwise disjoint simple closed geodesics. 
\end{remark}
For any annulus-shaped trigon shown in Figure \ref{fig:trigon}, it is known from \emph{e.g.} the proof of \cite[Theorem 4.5.2]{buser2010geometry} that the distance between the vertex $A$ or $B$ and the simple closed geodesic side $\gamma$ is 
\begin{equation}\label{rmk:trigonlength}
 d(A, \gamma) = d(B, \gamma) = \arcsinh \left( \frac{1}{\sinh \left( \frac{1}{2}\ell(\gamma) \right) } \right) - \frac{1}{2}\log 2.    
\end{equation}

\subsection{Immersion}
For clarity of this paper, we recall some definitions on surface topology. 

\begin{definition}
	For a curve in $ \mathbb{R}^2$ with end points having exactly one self-intersection point, according to the Jordan curve theorem, it bounds a disk with one corner (see Figure \ref{fig:monogon_def}). We call this disk \emph{monogon}. For two curves in $ \mathbb{R}^2$ with end points having exactly two self-intersection points, according to the Jordan curve theorem, they also bound a disk with two corners (see Figure \ref{fig:bigon_def}). We call this disk \emph{bigon}.
	\label{def:mono_bi}
\end{definition}

\begin{figure}[htbp]
	\centering
	\begin{subfigure}[htbp]{.45\textwidth}
	\begin{center}
	\includegraphics{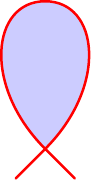}
	\end{center}
	\caption{A monogon}
	\label{fig:monogon_def}
	\end{subfigure}
	\begin{subfigure}[htbp]{.45\textwidth}
	\begin{center}
		\includegraphics{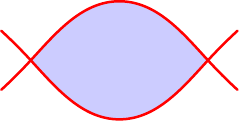}
	\end{center}
	\caption{A bigon}
	\label{fig:bigon_def}
	\end{subfigure}
	\caption{A monogon and a bigon}
	\label{fig:mono_bi}
\end{figure}

\begin{definition}
	Let $G$ be a monogon or a bigon, $X$ be a closed surface, and $f:G\to X$ be a map. We call $G$ is \emph{immersed} in $X$ if $f$ is locally injective; $G$ is \emph{embedded} in $X$ if $f$ is injective. 
	\label{def:immerse}
\end{definition}

In the following, an immersed (or {\em resp.} embedded) monogon ({\em resp.} bigon) on a surface is referred to as the image of an immersed ({\em resp.} embedded) map from a monogon ({\em resp.} bigon) to the surface (see Figure \ref{fig:bigon_immersed} for an immersed bigon). 

\begin{figure}[htbp]
	\centering
	\includegraphics{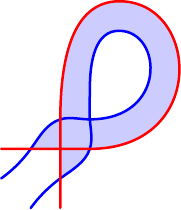}
	\caption{An immersed bigon}
	\label{fig:bigon_immersed}
\end{figure}

\subsection{Curves in minimal position}
Now we recall the minimal position for curves.
\begin{definition}[{\cite[Section 1.2.3, page 30]{farb2011primer}}]
	Let $\Gamma = \{\gamma_1, \gamma_2, ..., \gamma_n\}$ be a set of incontractible closed curves in a surface. Say $\Gamma$ is {\em in minimal position} if for any $i, j = 1,2,..., n$, the number of intersection points of $\gamma_i$ and $\gamma_j$ is minimal in the isotopy classes of $\gamma_i$ and $\gamma_j$. In other words, if $\Gamma$ is in minimal position, then no self-intersection point of $\Gamma$ can be eliminated by homotopy. 
	\label{def:min_position}
\end{definition}

A criterion for curves in minimal position is 
\begin{lemma}[{\cite[Theorem 3.5]{hass1985intersections}} or {\cite[Lemma 5.1]{gaster2017infima}}]
	In a surface, a curve set $\Gamma = \{\gamma_1, \gamma_2, ..., \gamma_n\}$ is in minimal position if and only if no $\gamma_i$ bounds an immersed monogon and no pair $\gamma_i, \gamma_j$ bounds an immersed bigon. 
	\label{lem:min_pos_crit}
\end{lemma}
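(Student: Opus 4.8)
\emph{This is a classical result of Hass and Scott; I sketch a self-contained argument via the universal cover, in the spirit of the bigon criterion for simple closed curves in \cite{farb2011primer}.} Fix a complete hyperbolic metric on the ambient surface $S$; if $\chi(S)\ge 0$ then $S$ carries no incontractible curve (and the statement is vacuous or elementary), so assume $\chi(S)<0$ and identify the universal cover $\widetilde{S}$ with $\mathbb{H}^2$. After a small perturbation we may assume $\Gamma$ is in general position, so that all of its self- and mutual intersections are transverse double points. The heart of the matter is the following reformulation of minimal position, which I would establish first: \emph{$\Gamma$ is in minimal position if and only if, in the full preimage $\pi^{-1}(\Gamma)\subset\mathbb{H}^2$, every lift of every $\gamma_i$ is an embedded line and any two distinct lifts meet in at most one point.} The relevant facts are that a lift of $\gamma_i$ is a proper line in $\mathbb{H}^2$ whose two endpoints on $\partial\mathbb{H}^2$ are the fixed points of the hyperbolic element of $\pi_1(S)$ stabilising it — the same endpoints as those of the corresponding lift of the geodesic representative $\widehat{\gamma}_i$ — that two proper lines cross an even or an odd number of times according as their endpoint pairs are unlinked or linked, and that two distinct complete geodesics meet at most once. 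Counting the intersection points of $\Gamma$ orbit-by-orbit upstairs then shows that the total number of intersection points of $\Gamma$ equals the sum, over $\pi_1(S)$-orbits of pairs of lifts, of their crossing numbers; this is bounded below by the sum of the corresponding linking indicators, with equality exactly when no lift self-intersects and no two distinct lifts cross more than once. Since the lower bound is attained by the geodesic representatives, it equals the minimal intersection number of the free homotopy classes of the $\gamma_i$, and the equivalence follows.

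Granting this, both implications of the lemma follow quickly. Suppose first that some $\gamma_i$ bounds an immersed monogon $f\colon G\to S$, where $G$ is the disc with one corner $v$ whose boundary loop crosses itself transversely at $v$. As $G$ is simply connected, $f$ lifts to $\widetilde{f}\colon G\to\mathbb{H}^2$; since $f(\partial G)$ is null-homotopic in $S$, the image $\widetilde{f}(\partial G)$ is a loop contained in a single lift $\widetilde{\delta}$ of $\gamma_i$, and because $\widetilde{f}$ is locally injective this loop crosses itself transversely at $\widetilde{f}(v)$, so $\widetilde{\delta}$ is not embedded and $\Gamma$ is not in minimal position. Similarly, if $\gamma_i$ and $\gamma_j$ bound an immersed bigon with corners $v_1,v_2$, then $\widetilde{f}(\partial G)$ is a loop made of an arc of a lift $\widetilde{\delta}$ of $\gamma_i$ and an arc of a lift $\widetilde{\delta}'$ of $\gamma_j$ sharing the two distinct endpoints $\widetilde{f}(v_1),\widetilde{f}(v_2)$: if $\widetilde{\delta}=\widetilde{\delta}'$ this line self-intersects, and if $\widetilde{\delta}\ne\widetilde{\delta}'$ these two distinct lifts meet in at least two points — either way $\Gamma$ is not in minimal position. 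Conversely, if $\Gamma$ is not in minimal position, the reformulation yields either a lift $\widetilde{\delta}$ with $\widetilde{\delta}(s')=\widetilde{\delta}(t')$ for some $s'\ne t'$, or two distinct lifts meeting in two points; taking an innermost embedded sub-loop of the resulting loop in $\mathbb{H}^2$ produces an embedded monogon or bigon $\Delta\subset\mathbb{H}^2$, and then $\pi|_{\Delta}$ is an immersed monogon or bigon in $S$ whose corner(s) are genuine double points of $\Gamma$; one uses uniqueness of path lifts to check that the coincident parameters are truly distinct modulo the period of the curve.

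The main obstacle is the reformulation itself, in particular the assertion that the minimal intersection number of the homotopy classes equals the linking count of the endpoints of lifts and is realized by the geodesic representatives; this is exactly the content drawn from \cite{hass1985intersections} (see also \cite{gaster2017infima}), and it rests on the parity of crossing numbers of proper lines in $\mathbb{H}^2$ together with the fact that two complete geodesics meet at most once. A secondary but necessary point in the bookkeeping is distinguishing a self-intersection of a curve that arises from two distinct lifts from one occurring within a single non-embedded lift, and finally one should note that a non-orientable $S$ can be treated directly (its universal cover is still $\mathbb{H}^2$) or by passing to the orientation double cover.
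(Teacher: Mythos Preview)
The paper does not give its own proof of this lemma; it is quoted as a known result with citations to Hass--Scott \cite{hass1985intersections} and Gaster \cite{gaster2017infima}, so there is no in-paper argument to compare against.

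Your sketch follows the standard Hass--Scott strategy via lifts to the hyperbolic universal cover, and the overall architecture is correct: the reformulation of minimal position as ``every lift is embedded and any two distinct lifts meet at most once'', the endpoint--linking parity count, and the innermost monogon/bigon extraction all belong to the classical proof. You are also honest that the reformulation itself is the substantive step and is essentially what the cited references supply.

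One genuine slip: your reduction to $\chi(S)<0$ is wrong as stated. Surfaces with $\chi(S)=0$ --- the torus, the Klein bottle, the annulus, the M\"obius band --- certainly carry incontractible curves, so the statement is neither vacuous nor entirely elementary there. These cases are handled by the same scheme with a flat metric (lifts are straight lines in $\mathbb{R}^2$), but you should say so rather than dismiss them. A second, smaller point: in the converse direction you should check that the embedded monogon or bigon you find upstairs really has its corner(s) at points that project to \emph{transverse} double points of $\Gamma$; this follows from the general-position assumption, but deserves a sentence.
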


We recall the following two properties for contractible closed curves and curves in minimal position.
\begin{theorem}[{\cite[Theorem 2.7]{hass1985intersections}}]
A contractible closed curve either bounds a disk or bounds an embedded monogon or an embedded bigon. 
	\label{thm:contractible}
\end{theorem}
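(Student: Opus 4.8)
The plan is to establish the trichotomy for a contractible closed curve $\alpha$ in a surface $S$ by a curvature/innermost-disk argument, following the standard approach of Hass--Scott but here adapted to the purely topological statement. First I would assume, after a small perturbation, that $\alpha$ is a generic immersed closed curve, so its self-intersections are transverse double points; if there are none, $\alpha$ is embedded and, being contractible, bounds an embedded disk by the Jordan curve theorem together with the fact that one complementary region must be a disk (here one uses that $\alpha$ is null-homotopic, lifts to the universal cover, and bounds a disk there which projects injectively since $\alpha$ is simple). So assume $\alpha$ has at least one self-intersection point. The key object is a \emph{least-area} or \emph{innermost} subdisk: lift $\alpha$ to a null-homotopic loop $\tilde\alpha$ in the universal cover $\mathbb{H}^2$ (or $\mathbb{R}^2$), which by transversality is again a generic immersed loop; among all subarcs of $\alpha$ whose lift bounds a genuine immersed disk in $\mathbb H^2$ with at most one or two corners, choose an innermost one.

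More concretely, the second step is to run the following dichotomy on $\tilde\alpha$. Since $\tilde\alpha$ is a generic closed curve in the plane, the Whitney--Graustein / innermost-loop analysis gives a simple subloop: travel along $\tilde\alpha$, and at the first self-intersection point $p$ encountered, the arc of $\tilde\alpha$ from $p$ back to $p$ is an embedded loop in the plane, hence by the Jordan curve theorem bounds an embedded disk $D$. This disk $D$, pushed down to $S$, is an immersed subdisk of $S$ bounded by a subloop of $\alpha$ based at a single self-intersection point of $\alpha$ — that is, an (immersed, and after the innermost choice, embedded) monogon, \emph{unless} the two strands of $\alpha$ through $p$ actually coincide as subsets near $p$, which cannot happen for a generic curve. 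If instead $\alpha$ has self-intersections but no monogon of this type survives (because the innermost subloop is not formed by a single strand returning to itself but by two strands of $\alpha$ crossing twice and forming an embedded bigon), the same innermost-disk extraction yields an embedded bigon. Thus: either $\alpha$ is simple and bounds a disk, or an innermost self-intersection of $\alpha$ cuts off an embedded monogon or an embedded bigon. Strictly, to guarantee the disk is embedded (not merely immersed), one takes the subloop to be \emph{innermost} among all such subloops: any self-intersection of $\alpha$ inside $D$ would either cut off a strictly smaller disk (contradicting innermostness) or force $D$ to already be a monogon/bigon, and a genuinely embedded monogon or bigon is what we want.

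The third step is the reduction that makes "contractible" do its work. A priori the innermost-loop argument above extracts an embedded monogon or bigon for \emph{any} immersed curve with self-intersections, regardless of contractibility. The role of null-homotopy is only needed in the simple case: a simple closed curve need not bound a disk in general (e.g. a non-separating curve on a genus-$g$ surface), so one must invoke that $\alpha$ is contractible to conclude that, when $\alpha$ has no self-intersections, its complement has a disk component bounded by $\alpha$. This is where I would lift to the universal cover $\tilde S$: a simple contractible loop lifts homeomorphically to a simple loop in $\tilde S$, which bounds an embedded disk $\tilde D$ in $\tilde S$; the covering projection restricted to $\tilde D$ is injective because any two points of $\tilde D$ identified downstairs would force a deck transformation fixing $\alpha$'s lift, impossible since $\alpha$ is embedded and $\tilde S$ is simply connected (the interior is disjoint from all its nontrivial translates). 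Hence $\alpha$ bounds an embedded disk in $S$.

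\textbf{Main obstacle.} The delicate point is the innermost-disk extraction in the non-simple case: one must argue carefully that the innermost subloop of $\tilde\alpha$ is formed either by a single self-crossing (giving a monogon) or by exactly two self-crossings of two distinct strands (giving a bigon), and never by a more complicated configuration — and that "innermost" can genuinely be achieved, i.e. the relevant partial order on subdisks is well-founded. With only finitely many self-intersection points (from genericity) this well-foundedness is automatic, so the real work is the combinatorial case analysis at the innermost crossing, verifying that no third strand can obstruct embeddedness of the cut-off disk; this is exactly the content of \cite[Theorem 2.7]{hass1985intersections}, and I would model the write-up on their argument.
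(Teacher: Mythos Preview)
First, a contextual note: the paper does not give a proof of this statement --- it is quoted as a preliminary result from Hass--Scott and is only ever applied (in Lemma~\ref{lem:mid_gph_incontractible}) to curves already lying in the universal cover $\widetilde{X}\cong\mathbb H^2$. In that restricted setting your first-self-intersection argument is essentially complete, since every closed curve in the plane has an embedded innermost subloop.

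For the general statement on a surface $S$, however, your argument has a genuine gap. Step~2 tacitly assumes that if $\alpha$ has self-intersections in $S$ then its lift $\tilde\alpha$ has self-intersections in $\widetilde S$. This is false: a self-intersection $\alpha(t_1)=\alpha(t_2)$ only says $\tilde\alpha(t_1)=g\cdot\tilde\alpha(t_2)$ for some deck transformation $g$, and when $g\neq\mathrm{id}$ the single lift $\tilde\alpha$ sees no crossing at all. Concretely, take $\tilde\alpha$ to be any embedded circle in $\mathbb H^2$ large enough that some translate $g(\tilde\alpha)$ crosses it; then $\alpha=\pi(\tilde\alpha)$ is contractible and non-simple, yet $\tilde\alpha$ is simple, so your ``travel along $\tilde\alpha$ to the first self-intersection'' never begins, and your step~3 does not apply either since $\alpha$ is not embedded. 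Even when $\tilde\alpha$ \emph{is} non-simple, the innermost monogon of $\tilde\alpha$ need not push down to an embedded disk in $S$, because other translates $g(\tilde\alpha)$ may cut through it; ``innermost for $\tilde\alpha$ alone'' does not detect them. This is precisely where bigons enter, and your account of that case is off: for a single curve in the plane the first-return construction \emph{always} yields a monogon, never a bigon. Bigons arise only from innermost regions bounded by arcs of two \emph{distinct} lifts $g(\tilde\alpha)$ and $h(\tilde\alpha)$, which then project to two subarcs of $\alpha$ meeting at two self-intersection points. The fix is to work with the full preimage $\pi^{-1}(\alpha)=\bigcup_{g}g(\tilde\alpha)$, locate an innermost complementary region in $\widetilde S$ (necessarily a monogon or bigon), and then check that such a region projects injectively to $S$ because a nontrivial deck transformation stabilising it would have to fix or swap its corners, contradicting torsion-freeness of $\pi_1(S)$. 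Your step~3 for the simple case has the right conclusion but the justification (``would force a deck transformation fixing $\alpha$'s lift'') is not quite the mechanism; what one actually gets is $g(\widetilde D)\cap\widetilde D\neq\emptyset$ with disjoint boundaries, hence nested disks of equal diameter, which is impossible by proper discontinuity.
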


\begin{theorem}[{\cite[Theorem 2.1 and 2.2]{hass1994shortening}} or {\cite[Section 2.1]{levitt2000whitehead}}]
	For $X \in \mathcal{M}_g$ and a finite set $\Gamma\subset X$ of closed curves in minimal position, there is an isotopy of $X$, mapping $\Gamma$ to a set of closed geodesics on $X$ up to finitely many triangle moves. A triangle move is illustrated in Figure \ref{fig:triangle_move}. 
	\label{thm:isotopy_geod}
\end{theorem}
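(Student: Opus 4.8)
The plan is to realize the isotopy by a length-shortening deformation of $\Gamma$ and to analyze which combinatorial changes can occur along it, following the curve-shortening approach of Hass--Scott. Since minimal position is only defined for incontractible curves, each $\gamma_i$ is freely homotopic to a unique closed geodesic $\gamma_i^*$ on $X$; after a small preliminary ambient isotopy we may assume $\Gamma$ is self-transverse with only double points. A first point to record is that $\Gamma^* := \gamma_1^* \cup \cdots \cup \gamma_n^*$ is itself in minimal position: by Lemma~\ref{lem:min_pos_crit} it suffices to exclude immersed monogons and bigons bounded by the $\gamma_i^*$, and lifting such a configuration to $\mathbb{H}^2$ and applying Gauss--Bonnet gives $-\area + (\pi - \alpha) = 2\pi$ for a monogon (one corner of interior angle $\alpha \in (0,\pi)$) and $-\area + (2\pi - \alpha_1 - \alpha_2) = 2\pi$ for a bigon, both impossible since the left-hand sides are strictly less than $\pi$ and $2\pi$ respectively. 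So $\Gamma^*$ is a closed multi-geodesic in minimal position, and it remains to connect $\Gamma$ to $\Gamma^*$.

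Next, deform $\Gamma$ by the length-shortening (disk) flow: repeatedly select an embedded disk meeting the configuration in a bounded family of arcs and replace those arcs by the geodesic arcs with the same endpoints. Each such elementary move is an isotopy supported in its disk, hence induced by an ambient isotopy of $X$; no component ever collapses, since every component is incontractible; and the total length is nonincreasing, bounded below by $\sum_i \ell(\gamma_i^*) > 0$. The invariant to maintain is that the configuration stays in minimal position along the entire flow, i.e. that no immersed monogon or bigon is ever created. Granting this, a subsequential limit of the flow exists by Arzel\`a--Ascoli and consists of closed geodesics (local length minimizers in their free-homotopy classes), and since the free-homotopy class of each component is preserved the limit is $\Gamma^*$ up to ambient isotopy.

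It remains to account for the triangle moves. Along the flow the free-homotopy class of every component and all the (essential) self- and pairwise intersection numbers are unchanged, so the only way the planar picture of the configuration can change is by a Reidemeister-III move --- a small triangle of three strands sliding across its opposite vertex, which is precisely a triangle move --- since Reidemeister-I and -II moves would alter an intersection number and are excluded by minimal position. Only finitely many triangle moves occur, because each step of the flow is supported in a single disk and only finitely many steps intervene before the geodesic limit. Composing the ambient isotopy produced by the flow with this finite sequence of triangle moves carries $\Gamma$ to the closed multi-geodesic $\Gamma^*$, which is the assertion.

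The main obstacle is keeping minimal position under straightening. One has to show that an immersed monogon or bigon appearing after a straightening move can be pushed back across the disk in which the move was performed to produce a monogon or bigon of no larger perimeter in the previous configuration, contradicting minimal position there; making this innermost-disk and compactness argument precise --- and handling the interaction of several nearby straightening disks, as well as the distinction between self-intersections and intersections of distinct components --- is the technical core of the theorem, and is exactly what the cited work of Hass--Scott and Levitt carries out.
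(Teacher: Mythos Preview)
The paper does not prove this statement at all: Theorem~\ref{thm:isotopy_geod} is quoted from Hass--Scott and Levitt with a citation and no argument, and is then used as a black box in Proposition~\ref{prop:fill_geod}. So there is no ``paper's own proof'' to compare your attempt against.

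Your sketch is a faithful outline of the curve-shortening strategy in the cited references, and you are appropriately candid that the technical heart --- keeping the configuration in minimal position along the flow --- is deferred to those works. One point does not hold up as written: the assertion that ``only finitely many steps intervene before the geodesic limit'' is not how the Hass--Scott disk flow behaves. The flow is an infinite process converging to the geodesic configuration, and the finiteness of triangle moves has to be argued on separate grounds (e.g.\ by showing that the combinatorial type of the self-transverse configuration can change only finitely many times along the flow, via a complexity or compactness argument). As your text stands, that sentence is a gap rather than an argument. Otherwise the outline is consistent with what the cited papers do.
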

\begin{figure}[htbp]
	\centering
	\includegraphics{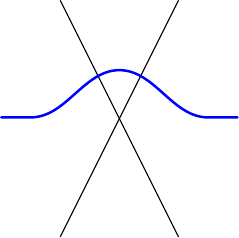}
	\includegraphics{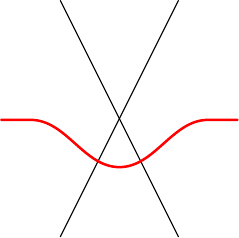}
	\caption{A triangle move}
	\label{fig:triangle_move}
\end{figure}

\section{From shortest filling graphs to filling closed multi-geodesics}\label{sec:dual_gph}
In this section, a filling closed multi-curve is constructed using the shortest filling graph in Section \ref{sec:short_gph}. The length of the closed multi-curve is bounded from above by the length of the graph. 

\begin{definition}
Let $G \in \mathcal{G}$ be a graph embedded in $X\in \mathcal{M}_g$, with geodesic edges, satisfying that $X \backslash G$ are convex polygons. 
Join the mid-points of neighboring edges of $G$ by geodesics, then we get a $4$-valent graph $\mathcal{D}(G)$.	
We define $\mathcal{D}(G)$ as the {\em dual $4$-valent graph} of $G$. 
	\label{def:dual_4_valent_graph}
\end{definition}

\begin{figure}[htbp]
	\centering
	\includegraphics{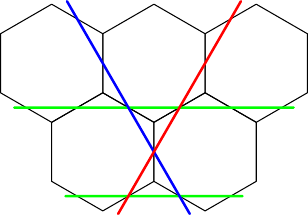}
	\caption{$G$ and its dual $4$-valent graph $\mathcal{D}(G)$}
	\label{fig:midptgph}
\end{figure}

\noindent A $4$-valent graph embedded in a surface can be treated as a union of closed curves on $X$ (see Figure \ref{fig:midptgph} for an illustration). In general, the $4$-valent graph $\mathcal{D}(G)$ may contain contractible curves and may not be in its minimal position (Figure \ref{fig:bigon}). However, 
\begin{proposition}
If $G_0$ is the graph that realizes the minimal length in $\mathcal{G}$, then
   \begin{enumerate}
        \item each closed curve in its dual $4$-valent graph  $\mathcal{D}(G_0)$ is homotopically non-trivial.
        \item The curves in $\mathcal{D}(G_0)$ are in minimal position. 
        \item The dual $4$-valent graph $\mathcal{D}(G_0)$ is isotopic to a filling set of closed geodesics in $X$. 
    \end{enumerate}
	\label{prop:ess_mid_gph}
\end{proposition}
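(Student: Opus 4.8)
The plan is to first read off the combinatorics and metric of $\mathcal D(G_0)$ from Proposition~\ref{prop:min_prop}, then use the Gauss--Bonnet formula together with an Euler-characteristic count to exclude all configurations forbidden by Lemma~\ref{lem:min_pos_crit} and Theorem~\ref{thm:contractible}, and finally invoke Theorem~\ref{thm:isotopy_geod}. For the structure: by Proposition~\ref{prop:min_prop}, $G_0$ is trivalent, every angle of $G_0$ equals $\tfrac23\pi$, and $X\setminus G_0$ is a single convex geodesic polygon $P$, with $2|E(G_0)|$ sides and $3|V(G_0)|$ corners of interior angle $\tfrac23\pi$, three corners lying over each vertex of $G_0$. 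Cutting $X$ along $G_0$ identifies $\mathcal D(G_0)$ with the polygon joining the consecutive side-midpoints of $P$; since $P$ is convex this ``midpoint polygon'' is embedded, it divides $P$ into one convex inner region $P'$ and one small triangle at each corner of $P$, and regluing along $G_0$ fuses the three triangles over a vertex $v$ into a geodesic triangle $T_v\ni v$. Hence $\mathcal D(G_0)$ is embedded, it fills $X$, and $X\setminus\mathcal D(G_0)=P'\sqcup\bigsqcup_{v}T_v$ with all pieces disks; every edge of $\mathcal D(G_0)$ separates some $T_v$ from $P'$, so $P'$ is adjacent to every other piece while the $T_v$'s are pairwise non-adjacent. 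Applying Gauss--Bonnet to a corner triangle $\triangle v m_im_j$ (angle $\tfrac23\pi$ at $v$) shows its other two angles are $<\tfrac\pi3$; thus each $T_v$ has all angles $<\tfrac23\pi$, $P'$ has all angles in $(\tfrac\pi3,\pi)$, and, since at a midpoint the two chords of a strand of $\mathcal D(G_0)$ lie on opposite sides of, and make angle $<\tfrac\pi3$ with, the $G_0$-edge through that point, every curve of $\mathcal D(G_0)$ turns by less than $\tfrac\pi3$ in absolute value at each vertex it traverses.

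Parts (1) and (2): by Lemma~\ref{lem:min_pos_crit} and Theorem~\ref{thm:contractible} it suffices to show $\mathcal D(G_0)$ bounds no disk, no immersed monogon and no immersed bigon. Given such $f\colon M\to X$ with $f(\partial M)\subset\mathcal D(G_0)$, pull back the hyperbolic metric; each component of $\operatorname{int}M\setminus f^{-1}(\mathcal D(G_0))$ maps onto a complementary piece of $\mathcal D(G_0)$, and its boundary degree must be $1$ (a disk cannot be immersed onto a hyperbolic geodesic polygon with boundary degree $\ge2$, by Gauss--Bonnet), so $\overline M$ is tiled by isometric copies of the polygons $P'$ and $\{T_v\}$ glued along edges, the tiling being bipartite with the $P'$-copies on one side. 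When $f$ is an embedding --- this covers the ``disk'' and ``embedded monogon/bigon'' cases --- then $M$ uses at most one $P'$-copy and each $T_v$ at most once, so either $M=T_v$ for some $v$, which is impossible because $\partial T_v$ turns by more than $\tfrac\pi3$ at each corner and hence is neither a closed curve of $\mathcal D(G_0)$ nor a monogon/bigon boundary; or $M=P'\cup\bigcup_{v\in S}T_v$ with $S\subsetneq V(G_0)$ (as $X$ is not a disk), so $X\setminus\operatorname{int}M=\bigsqcup_{v\notin S}\overline{T_v}\neq\emptyset$ and
\[
  \chi(X)=\chi(M)+\sum_{v\notin S}\chi(\overline{T_v})-\chi(\partial M)=1+|\{v\notin S\}|\ge 2,
\]
contradicting $\chi(X)=2-2g\le-2$.

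It remains to exclude a genuinely immersed (non-embedded) monogon or bigon. One route is to pass to an innermost such configuration, which is embedded and therefore already ruled out above. A direct route is to run Gauss--Bonnet on the tiled disk $\overline M$: the identity $\sum_{\partial M}(\pi-\beta_i)=2\pi+\area(M)>2\pi$ must be met while, apart from the at most two distinguished corners (exterior angle $<\pi$), the boundary turnings are controlled by the angle data of the first paragraph, which together with the pendant-$T_v$ / large-$P'$ shape of the tiling leaves no admissible configuration. Either way (1) and (2) follow. For part (3): by (1) the curves of $\mathcal D(G_0)$ are incontractible and by (2) they are in minimal position, so Theorem~\ref{thm:isotopy_geod} carries $\mathcal D(G_0)$, via an isotopy of $X$ and finitely many triangle moves, to a closed multi-geodesic $\Gamma$; isotopies and triangle moves preserve the homeomorphism type of the complement and $\mathcal D(G_0)$ fills $X$, so $\Gamma$ is a filling closed multi-geodesic isotopic to $\mathcal D(G_0)$.

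The main obstacle is the exclusion of genuinely immersed monogons and bigons in the third paragraph. In the innermost-disk approach one must verify carefully that components of $\operatorname{int}M\cap\mathcal D(G_0)$ (arcs with endpoints on $\partial M$, or innermost circles) really yield strictly smaller monogons/bigons --- with an innermost circle producing a disk bounded by $\mathcal D(G_0)$, which is the configuration already excluded --- and that the minimal-area representative is embedded. In the direct Gauss--Bonnet approach the delicate point is the boundary vertices at which un-glued sides of two tiles meet: there the turning can be as large as $\tfrac23\pi$ (from a $T_v$-angle), so a crude estimate fails and one must track the boundary combinatorics of the tiled disk precisely. A secondary point needing care is the structural claim of the first paragraph --- that $\mathcal D(G_0)$ is embedded with exactly the stated face decomposition --- which rests on the convexity of $P$ supplied by Proposition~\ref{prop:min_prop} (so that the midpoint polygon of $P$ is a simple inscribed polygon).
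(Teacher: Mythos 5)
Your structural setup (the face decomposition $X\setminus\mathcal D(G_0)=P'\sqcup\bigsqcup_vT_v$ with $P'$ adjacent to every $T_v$ and the $T_v$'s pairwise non-adjacent, the angle bounds from Gauss--Bonnet on the corner triangles, and the observation that a sub-disk of $X$ with boundary in $\mathcal D(G_0)$ is a union of whole complementary faces) is correct, and the Euler-characteristic count
\[
\chi(X)=\chi(M)+\sum_{v\notin S}\chi(\overline{T_v})\ge 2>2-2g
\]
is a clean and genuinely different way to rule out embedded disks, monogons and bigons in $X$: the paper instead lifts to the universal cover and computes areas by Gauss--Bonnet. Your treatment of part (3) via Theorem~\ref{thm:isotopy_geod} also matches the paper.

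The gap is exactly the one you flag: the proof does not actually exclude genuinely immersed monogons and bigons, and neither of the two routes you sketch closes it. The ``innermost configuration'' route fails as stated, because an innermost immersed monogon/bigon in a surface of positive genus is not automatically embedded --- the correct reduction is to pass to the universal cover $\widetilde X$, where the first self-intersection of a lift does yield an \emph{embedded} monogon/bigon (this is precisely what the paper's Lemma~\ref{lem:lift_embed} does). But once you are in $\widetilde X$ your Euler-characteristic count no longer applies: $\widetilde X$ is non-compact, the tiling of $\widetilde X$ by lifts of $P'$ and $T_v$ contains infinitely many copies of each face, and the crucial cardinality constraint ``$M$ uses at most one copy of $P'$ and each $T_v$ at most once'' is simply false upstairs. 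So after the reduction you need a different finiteness argument on the bounded disk itself, which is where the paper's Gauss--Bonnet area computation in Proposition~\ref{prop:ess_mid_gph_univ} (or the $\chi(D)=1$ count with per-vertex degree bounds used in Proposition~\ref{prop:ess_tri_mid_gph_univ}) comes in. Your ``direct Gauss--Bonnet on $\overline M$'' alternative is also not carried out, and you correctly observe that the turning at a boundary vertex where two tiles meet along unglued sides can be close to $\tfrac{2}{3}\pi$, so the naive estimate does not close. To complete the proof you should replace the heuristic third paragraph with the lift-to-$\widetilde X$ reduction plus a local Gauss--Bonnet (or Euler-characteristic) argument on the embedded disk bounded by the lifted curves, using that every interior angle of $\widetilde G_0$ equals $\tfrac{2}{3}\pi$ to force a negative area, i.e., a contradiction.
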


\begin{figure}[htbp]
	\centering
	\includegraphics{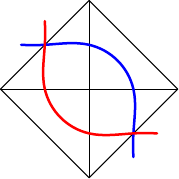}
	\caption{A bigon in $\mathcal{D}(G)$}
	\label{fig:bigon}
\end{figure}

\noindent We prove it in steps. Denote the universal cover of the surface $X$ as $ \widetilde{X}$. For $G\in \mathcal{G}$, denote the pre-image of $G$ and $ \mathcal{D}(G)$ on $ \widetilde{X}$ as $ \widetilde{G}$ and $\mathcal{D}(\widetilde{G})$ respectively. 

\begin{lemma}
	For $G\in \mathcal{G}$, 

	(1) If a curve $\alpha$ in $\mathcal{D}(G)$ bounds an immersed monogon, then the lift $ \widetilde{\alpha}$ of $\alpha$ in $\mathcal{D}(\widetilde{G})$ bounds an embedded monogon. 

	(2) If two curves $\alpha$, $\beta$ in $\mathcal{D}(G)$ bound an immersed bigon, then either one of the lifts $ \widetilde{\alpha}$, $ \widetilde{\beta}$ bounds an embedded monogon, or $ \widetilde{\alpha}$ and $ \widetilde{\beta}$ bound an embedded bigon. 
	
	\label{lem:lift_embed}
\end{lemma}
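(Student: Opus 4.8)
The statement is a standard ``lifting lemma'' for monogons and bigons, and the plan is to argue by covering-space considerations exactly as one does in the classical proof of Lemma~\ref{lem:min_pos_crit} (see \cite{hass1985intersections, farb2011primer}). The starting observation is that $\mathcal{D}(\widetilde G)$ is precisely the full preimage of $\mathcal{D}(G)$ under the covering map $p\colon \widetilde X \to X$, so every arc of $\mathcal{D}(\widetilde G)$ maps isometrically onto an arc of $\mathcal{D}(G)$, and a disk in $\widetilde X$ bounded by arcs of $\mathcal{D}(\widetilde G)$ maps to an immersed disk in $X$ bounded by arcs of $\mathcal{D}(G)$ (and conversely a disk immersed in $X$ lifts, since a disk is simply connected).

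For part (1): suppose $\alpha \subset \mathcal{D}(G)$ bounds an immersed monogon $f\colon M \to X$, where $M$ is a disk with one corner. Since $M$ is simply connected, $f$ lifts to $\widetilde f\colon M \to \widetilde X$; because $f$ is locally injective and $\widetilde X$ is the universal cover, $\widetilde f$ is in fact an embedding (a locally injective map of a compact disk into a simply connected surface with no monogon/bigon obstructions is an embedding — this is the standard ``innermost disk'' argument, which one can also phrase by noting that a non-embedded point of $\widetilde f$ would descend to a transverse or tangential self-intersection forcing a smaller immersed monogon or bigon, and then iterating to reach an embedded one). Restricting $\widetilde f$ to $\partial M$ gives an embedded monogon in $\widetilde X$ whose boundary arc is the lift $\widetilde\alpha$ of $\alpha$ together with its single corner; this is the desired embedded monogon bounded by $\widetilde\alpha$.

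For part (2): suppose $\alpha, \beta$ bound an immersed bigon $f\colon B \to X$, with $B$ a disk with two corners, one side on $\alpha$ and one on $\beta$. Lift $f$ to $\widetilde f\colon B \to \widetilde X$ as above. If $\widetilde f$ is an embedding, its restriction to the boundary exhibits $\widetilde\alpha$ and $\widetilde\beta$ (the lifts of the two sides) bounding an embedded bigon, and we are done. If $\widetilde f$ is not an embedding, run the innermost-disk reduction: a self-overlap of $\widetilde f$ produces either an embedded monogon — whose boundary lies on a single lift, hence on $\widetilde\alpha$ or on $\widetilde\beta$ — or a strictly smaller embedded bigon; iterating this finite reduction (the number of double arcs strictly decreases) terminates either in an embedded monogon on one of $\widetilde\alpha$, $\widetilde\beta$, or in an embedded bigon between them. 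Either outcome is one of the two alternatives in the statement.

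\textbf{Main obstacle.} The only real content is the ``immersed $\Rightarrow$ embedded'' reduction inside the simply connected cover $\widetilde X$: one must verify that the innermost-disk surgery is legitimate in this setting (the arcs of $\mathcal{D}(\widetilde G)$ are geodesic segments, not closed geodesics, so one is dealing with arcs with endpoints at vertices of the $4$-valent graph, and one should check that reductions never create new corners beyond those allowed and that the process indeed terminates). I expect this bookkeeping — tracking corners and confirming termination of the surgery — to be the part that needs care; the covering-space lifting itself is routine since disks are simply connected.
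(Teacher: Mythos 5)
Your approach --- lifting the two-dimensional disk map $f\colon M\to X$ to $\widetilde f\colon M\to \widetilde X$ and then trying to upgrade $\widetilde f$ to an embedding --- is genuinely different from, and heavier than, what the paper does. The paper never lifts the disk: it lifts only the curve $\alpha$ and works purely in dimension one. The essential observation is that the boundary loop of the immersed monogon is null-homotopic in $X$ (it bounds the immersed disk), so it lifts to a \emph{closed} subloop of a lift $\widetilde\alpha$, and in particular $\widetilde\alpha$ has a self-intersection. Parameterizing $\widetilde\alpha$ and taking the \emph{first} self-intersection --- the minimal $t_2$ with $\widetilde\alpha(t_1)=\widetilde\alpha(t_2)$ for some $t_1<t_2$, so that $\widetilde\alpha|_{(0,t_2)}$ is injective --- produces a simple closed loop $\widetilde\alpha([t_1,t_2])$, and the Jordan curve theorem in $\widetilde X$ then yields the embedded monogon. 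For part (2): if either lift fails to be simple, part (1) applies; if both are simple arcs they intersect in at least two points (the two lifted corners of the bigon), and two consecutive intersection points cut out an embedded bigon.

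The concrete gap in your write-up is the assertion that $\widetilde f$ is ``in fact an embedding.'' That is false in general: a locally injective map of a compact disk into the plane whose boundary is not simple need not be an embedding --- a lima\c{c}on (a closed plane curve with one transverse self-intersection and an inner loop nested inside the outer loop) bounds an immersed disk that covers the inner lobe twice --- and here the boundary $\widetilde f|_{\partial M}$ \emph{does} have a self-intersection, namely the lifted corner, so the simple-boundary case does not apply. Your parenthetical fallback, ``iterating the innermost-disk surgery,'' names the right mechanism but is never carried out, and it conflates two different conclusions: producing an embedded monogon on $\widetilde\alpha$, which is what the lemma asks for, versus making $\widetilde f$ itself an embedding, which you never need and do not establish. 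If you run the innermost-disk argument for the boundary loop $\widetilde f|_{\partial M}\subset\widetilde\alpha$ rather than for the full two-dimensional map, you land exactly on the paper's one-dimensional first-self-intersection argument; as written, the decisive step is asserted rather than proved.
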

\begin{proof}
	(1) If a curve $\alpha \subset \mathcal{D}(G) $ bounds an immersed monogon, then any lift of it has at least one self-intersection point. Take one of its lift $ \widetilde{\alpha}: (0,1) \to \widetilde{X}$. Let $t_1, t_2 \in (0,1)$ ($t_1<t_2$) represent its first self-intersection point. More precisely, let $ \widetilde{\alpha}$ be injective in $(0,t_2)$ and $ \widetilde{\alpha}(t_1) = \widetilde{\alpha}(t_2)$. Then  the image $ \widetilde{\alpha}([t_1,t_2])$ bounds an embedded monogon in $ \widetilde{X}$. 

	(2) If either $ \widetilde{\alpha}$ or $ \widetilde{\beta}$ has self-intersections, then it bounds an embedded monogon, as is proved in (1). Assume $ \widetilde{\alpha}$ and $ \widetilde{\beta}$ are simple arcs. Since $\alpha$ and $\beta$ bounds an immersed bigon, the two corners of this immersed bigon are lifted to intersection points of $ \widetilde{\alpha}$ and $ \widetilde{\beta}$ by the homotopy lifting theorem, hence $ \widetilde{\alpha}$ and $ \widetilde{\beta}$ intersect at least twice. The subarcs of $ \widetilde{\alpha}$ and $ \widetilde{\beta}$ between two neighboring intersection points of $ \widetilde{\alpha}$ and $ \widetilde{\beta}$ bound an embedded bigon in $ \widetilde{X}$. 

	The proof is complete. 
\end{proof}

\begin{proposition}
If $G_0$ is the graph that realizes the minimal length in $\mathcal{G}$, then in $\mathcal{D}( \widetilde{G_0})$, each curve does not bound any disk or embedded monogon. Each pair of curves does not bound any embedded bigon. 
	\label{prop:ess_mid_gph_univ}
\end{proposition}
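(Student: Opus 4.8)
The plan is to pass to the universal cover $\widetilde X=\mathbb{H}^2$ and argue by Gauss--Bonnet, after first describing the local geometry of $\mathcal{D}(\widetilde{G_0})$. By Proposition~\ref{prop:min_prop}, $\widetilde{G_0}$ is a trivalent geodesic graph in $\mathbb{H}^2$ all of whose angles equal $\tfrac{2}{3}\pi$, with convex geodesic polygons as complementary regions. Near a vertex $\tilde v$, the three segments of $\mathcal{D}(\widetilde{G_0})$ joining the midpoints of the edges at $\tilde v$ bound a geodesic triangle surrounding $\tilde v$; each of them is the side opposite $\tilde v$ of a ``corner triangle'' with angle $\tfrac{2}{3}\pi$ at $\tilde v$, so by Gauss--Bonnet the other two angles of that corner triangle sum to less than $\tfrac{1}{3}\pi$. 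I would record three consequences: every curve of $\mathcal{D}(\widetilde{G_0})$ is a piecewise geodesic; it meets $\widetilde{G_0}$ transversally and exactly at its breakpoints, the midpoints of edges of $\widetilde{G_0}$, crossing the edge there; and its turning (exterior) angle at every breakpoint has absolute value $<\tfrac{1}{3}\pi$. Also, since a geodesic meets each such segment at most once, each edge of $\widetilde{G_0}$ meets each curve of $\mathcal{D}(\widetilde{G_0})$ in at most its midpoint.

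Assume for contradiction that a curve bounds a disk or an embedded monogon, or that two curves bound an embedded bigon, and let $\Omega$ be the corresponding region, so that $\partial\Omega$ is a simple closed piecewise geodesic with a breakpoint--corner at each midpoint it meets and, in the monogon/bigon case, one (resp.\ two) further corners at transverse crossings of $\mathcal{D}(\widetilde{G_0})$. I would superimpose $\widetilde{G_0}$ on $\overline\Omega$; by the transversality above this yields a finite geodesic cell decomposition of the disk $\overline\Omega$ whose $2$--cells are convex polygons (pieces of complementary polygons of $\widetilde{G_0}$). Writing $n$ for the number of vertices and $m$ for the number of edges of $\widetilde{G_0}$ lying inside $\Omega$, trivalence gives that exactly $3n-2m$ edges of $\widetilde{G_0}$ cross $\partial\Omega$, and these crossings are precisely its breakpoints. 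Summing Gauss--Bonnet over the $2$--cells --- each interior vertex carries total angle $2\pi$ among three $2$--cells and so contributes $\pi$ to the exterior-angle sum, while the remaining $n$-- and $m$--terms cancel against $2\pi\,\chi(\overline\Omega)=2\pi$ by the trivalence relation --- I expect to arrive at the identity
\[
  \area(\Omega)+2\pi \;=\; \sum_{\text{corners of }\partial\Omega}(\text{exterior angle}).
\]

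Every breakpoint--corner contributes less than $\tfrac{1}{3}\pi$ in absolute value to the right-hand side. For the monogon and bigon cases I would analyse the one or two exceptional corners, each at a transverse crossing of $\mathcal{D}(\widetilde{G_0})$ where the four local sectors split into two ``narrow'' ones (angle $<\tfrac13\pi$) and two ``wide'' ones (angle $>\tfrac13\pi$); which sector $\Omega$ occupies there also determines whether the underlying edge of $\widetilde{G_0}$ is counted among the crossings, and combining this with $3n-2m=(\#\text{breakpoints})$ or $(\#\text{breakpoints})+1$ should push the right-hand side strictly below $2\pi$, contradicting $\area(\Omega)>0$. For the disk case, where there is no exceptional corner, the identity becomes $\area(\Omega)=\sum(\text{breakpoint turnings})-2\pi$, which forces $\Omega$ to contain a vertex of $\widetilde{G_0}$ and to cross $\widetilde{G_0}$ many times; I would exclude this using in addition the minimality of $\ell(G_0)$, since a null-homotopic component of $\mathcal{D}(G_0)$ bounds a disk in $X$ (Theorem~\ref{thm:contractible}) inside which the portion of $G_0$ it captures can be re-routed to strictly shorten $G_0$ while staying in $\mathcal{G}$, contradicting that $G_0$ realises $\min_{\mathcal{G}}\ell$.

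The step I expect to be the main obstacle is the middle one together with the exceptional-corner bookkeeping: making the superimposed cell decomposition and the telescoping rigorous --- in particular handling edges or vertices of $\widetilde{G_0}$ that lie on, or are tangent to, $\partial\Omega$ --- and then arranging the exceptional-corner estimates so that the final inequalities genuinely close rather than merely forcing the offending region to be large.
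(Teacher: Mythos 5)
Your plan shares the paper's broad strategy, namely passing to $\widetilde X$ and ruling out the three bad configurations by Gauss--Bonnet, and your local observations about $\mathcal D(\widetilde{G_0})$ are correct: the turning of a curve of $\mathcal D(\widetilde{G_0})$ at a midpoint $m$ has absolute value strictly less than $\tfrac{\pi}{3}$, because it equals $\pm(\beta-\alpha)$ with $\alpha,\beta$ base angles of corner triangles having apex angle $\tfrac{2\pi}{3}$, each $<\tfrac{\pi}{3}$.

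However, the key step does not close, and the gap is exactly where you flagged it. Your identity $\sum(\text{ext})=2\pi+\area(\Omega)$ (which is just global Gauss--Bonnet for the piecewise--geodesic domain $\Omega$; the cell decomposition adds nothing) only yields a contradiction if you can force $\sum(\text{ext})\le 2\pi$, and the per-corner bound $|\text{ext}|<\tfrac{\pi}{3}$ gives $\sum(\text{ext})<N\pi/3$ (plus at most $\pi$ per exceptional corner), which is harmless once the number $N$ of breakpoints is $\ge 7$ (disk), $\ge 4$ (monogon), etc. The combinatorial relation $3n-2m=N$ tells you how $N$ is built from interior vertices and edges, but $n,m$ are unbounded, so it does not "push the right-hand side strictly below $2\pi$" -- I do not see any way that bookkeeping can save the estimate. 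For the disk case you propose to retreat to a rerouting argument that strictly shortens $G_0$ inside the disk it captures, but this is a different mechanism entirely, is not worked out (how do you re-route so as to strictly decrease length while staying embedded, filling and in $\mathcal G(C)$?), and in any case would not handle the monogon and bigon cases.

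What is missing is the paper's trick of applying Gauss--Bonnet not to $\Omega$ but to the geodesic polygon $D\subset\widetilde X$ bounded by the edges of $\widetilde{G_0}$ that $\gamma$ crosses. Because $G_0$ is trivalent with every vertex angle $\tfrac{2\pi}{3}$ (Proposition~\ref{prop:min_prop}), the interior angles of $D$ are \emph{exactly} $\tfrac{2\pi}{3}$ or $\tfrac{4\pi}{3}$ and alternate around $\partial D$, so the angle defects in Gauss--Bonnet cancel in pairs and $\area(D)$ evaluates to a fixed negative constant ($-2\pi$ for the disk, $-\tfrac{5}{3}\pi$ or $-\tfrac{4}{3}\pi$ for the monogon, and similarly for the bigon) independently of the size of the configuration. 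Replacing the inexact boundary $\gamma$ by the enveloping $\widetilde{G_0}$-polygon is the essential idea that converts your inequalities, which degrade as $N$ grows, into an exact computation that does not; without it, the argument as sketched does not prove the proposition.
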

\begin{proof}
	Suppose that a curve $\gamma \subset \mathcal{D}( \widetilde{G_0})$ bounds a disk in $ \widetilde{X}$. Pick the edges in $ \widetilde{G_0}$ intersecting $\gamma$. Then these edges form a cycle and this cycle bounds a disk $D \subset \widetilde{X}$. By Proposition \ref{prop:min_prop}, edges of $G_0$ are geodesics and neighboring edges at a vertex have angle $ \frac{2}{3}\pi$. These are also true for $ \widetilde{G_0}$. Therefore, $D$ is a geodesic polygon (see Figure \ref{fig:disk}). By the construction of $\mathcal{D}( \widetilde{G_0})$, $D$ is a polygon with even number of vertices. The inner angle at each vertex is either $ \frac{2}{3}\pi$ or  $ \frac{4}{3}\pi$. The $ \frac{2}{3}\pi$-angle and the $ \frac{4}{3}\pi$-angle appear alternatively when we travel along $ \partial D$. 
	\begin{figure}[htbp]
		\centering
		\includegraphics{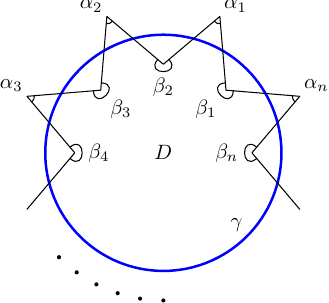}
		\caption{$D$ and $\gamma$}
		\label{fig:disk}
	\end{figure}

	\noindent Let $\alpha_1, \alpha_2, ..., \alpha_n$ be the inner angles of $D$ with value $ \frac{2}{3}\pi$ and $\beta_1, \beta_2, ..., \beta_n$  be the inner angles of $D$ with value $ \frac{4}{3}\pi$. Then by the Gauss-Bonnet formula,
	\begin{eqnarray*}
		\area(D) &=& \sum_{i=1}^n (\pi-\alpha_i) +\sum_{i=1}^n (\pi - \beta_i)  -2\pi\\
			 &=& n \left(\pi - \frac{2}{3}\pi \right) + n \left(\pi - \frac{4}{3}\pi\right) - 2\pi \\
			 &=& -2\pi, 
	\end{eqnarray*}
	which is impossible. Hence no $\gamma \subset \mathcal{D}( \widetilde{G_0})$ bounds a disk. 

	Suppose that $\gamma\subset \mathcal{D}( \widetilde{G_0})$ bounds a monogon. Pick the edges of $ \widetilde{G_0}$ intersecting the monogon. Among these edges, there is one edge passing through the self intersection point of the monogon. There is a disk $D \subset \widetilde{X}$, either bounded by all of these edges (see Figure \ref{fig:monogon_1}) or bounded by all the edges except the edge passing through the self intersection of the monogon (see Figure \ref{fig:monogon_2}). 

\begin{figure}[htbp]
	\centering
	\begin{subfigure}[htbp]{.45\textwidth}
	\begin{center}
		\includegraphics{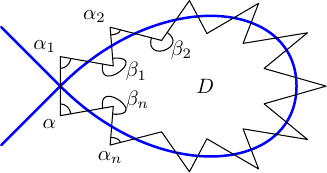}
	\end{center}
	\caption{Case 1}
	\label{fig:monogon_1}
	\end{subfigure}
	\begin{subfigure}[htbp]{.45\textwidth}
	\begin{center}
		\includegraphics{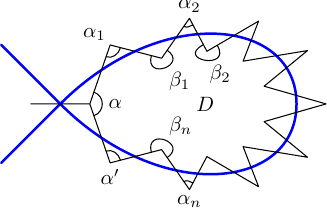}
	\end{center}
	\caption{Case 2}
	\label{fig:monogon_2}
	\end{subfigure}
	
	\caption{The monogon and $D$}
	\label{fig:monogon}
\end{figure}	

For the former case (see Figure \ref{fig:monogon_1}), 
	\begin{eqnarray*}
		\area(D) &=& (\pi - \alpha) + \sum_{i=1}^n (\pi - \alpha_i) + \sum_{i=1}^n (\pi - \beta_i) - 2\pi \\
			 &=& \left( \pi - \frac{2}{3} \pi \right) + \sum_{i=1}^n \left( \pi - \frac{2}{3} \pi \right) + \sum_{i=1}^n \left( \pi - \frac{4}{3} \pi \right) - 2\pi \\
			 &=& - \frac{5}{3} \pi, 
	\end{eqnarray*}
	which is impossible. 

	For the latter case (see Figure \ref{fig:monogon_2}), 
	\begin{eqnarray*}
		\area(D) &=& (\pi - \alpha) + (\pi - \alpha') + \sum_{i=1}^n (\pi - \alpha_i) + \sum_{i=1}^n (\pi - \beta_i) - 2\pi \\
			 &=& \left( \pi - \frac{2}{3} \pi \right) + \left( \pi - \frac{2}{3} \pi \right) + \sum_{i=1}^n \left( \pi - \frac{2}{3} \pi \right) + \sum_{i=1}^n \left( \pi - \frac{4}{3} \pi \right) - 2\pi \\
			 &=& - \frac{4}{3} \pi, 
	\end{eqnarray*}
	which is also impossible. Therefore no curve in $\mathcal{D}(\widetilde{G_0})$ bounds a monogon. 

	Suppose that a pair of curves in $\mathcal{D}(G_0)$ bounds a bigon. Edges of $ \widetilde{G_0}$ intersecting this bigon bound a disk $D$. Then we use Gauss-Bonnet formula to calculate $\area(D)$ to get a contradiction. According to the directions of edges passing through intersection points of the bigon, we classify $D$ into three types shown in Figure \ref{fig:bigon_p} and calculate $\area(D)$ case by case. 

\begin{figure}[htbp]
	\centering
	\begin{subfigure}[htbp]{.45\textwidth}
	\begin{center}
		\includegraphics{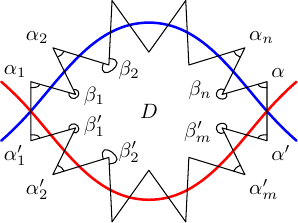}
	\end{center}
	\caption{Case 1}
	\label{fig:bigon_p_1}
	\end{subfigure}
	\begin{subfigure}[htbp]{.45\textwidth}
	\begin{center}
		\includegraphics{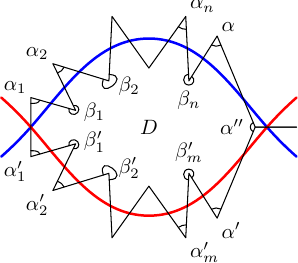}
	\end{center}
	\caption{Case 2}
	\label{fig:bigon_p_2}
	\end{subfigure}

	\begin{subfigure}[htbp]{.45\textwidth}
	\begin{center}
		\includegraphics{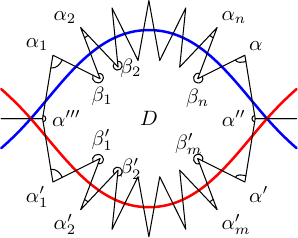}
	\end{center}
	\caption{Case 3}
	\label{fig:bigon_p_3}
	\end{subfigure}
	
	\caption{The bigon and $D$}
	\label{fig:bigon_p}
\end{figure}	
Case 1 (see Figure \ref{fig:bigon_p_1}):
	\begin{eqnarray*}
		\area(D) &= & (\pi-\alpha) + (\pi - \alpha') + \sum_{i=1}^n ( \pi-\alpha_i ) + \sum_{i=1}^n ( \pi-\beta_i ) \\
			 & & + \sum_{j=1}^m ( \pi-\alpha'_j ) + \sum_{j=1}^m ( \pi-\beta'_j ) -2\pi \\
			 &=& \left( \pi - \frac{2}{3}\pi \right) +  \left( \pi - \frac{2}{3}\pi \right) + \sum_{i=1}^n \left( \pi - \frac{2}{3}\pi \right) + \sum_{i=1}^n \left( \pi - \frac{4}{3}\pi \right) \\
			 & &+ \sum_{j=1}^m \left( \pi - \frac{2}{3}\pi \right) + \sum_{j=1}^m \left( \pi - \frac{4}{3}\pi \right) - 2\pi \\
			 &=& - \frac{4}{3}\pi,
	\end{eqnarray*}
	which is impossible. 

	Case 2 (see Figure \ref{fig:bigon_p_2}):
	\begin{eqnarray*}
		\area(D) &= & (\pi-\alpha) + (\pi - \alpha') + (\pi - \alpha'') + \sum_{i=1}^n ( \pi-\alpha_i ) + \sum_{i=1}^n ( \pi-\beta_i ) \\
			 & &+ \sum_{j=1}^m ( \pi-\alpha'_j ) + \sum_{j=1}^m ( \pi-\beta'_j ) -2\pi \\
			 &=& \left( \pi - \frac{2}{3}\pi \right) +  \left( \pi - \frac{2}{3}\pi \right) + \left( \pi - \frac{2}{3}\pi \right) \\
			 & & + \sum_{i=1}^n \left( \pi - \frac{2}{3}\pi \right) + \sum_{i=1}^n \left( \pi - \frac{4}{3}\pi \right) \\
			 & &+ \sum_{j=1}^m \left( \pi - \frac{2}{3}\pi \right) + \sum_{j=1}^m \left( \pi - \frac{4}{3}\pi \right) - 2\pi \\
			 &=& - \pi,
	\end{eqnarray*}
	which is impossible. 

	Case 3 (see Figure \ref{fig:bigon_p_3}):
	\begin{eqnarray*}
		\area(D) &=& (\pi-\alpha) + (\pi - \alpha') + (\pi - \alpha'') + (\pi - \alpha''') \\ 
			 & & + \sum_{i=1}^n ( \pi-\alpha_i ) + \sum_{i=1}^n ( \pi-\beta_i ) + \sum_{j=1}^m ( \pi-\alpha'_j ) + \sum_{j=1}^m ( \pi-\beta'_j ) -2\pi \\
			 &=& \left( \pi - \frac{2}{3}\pi \right) +  \left( \pi - \frac{2}{3}\pi \right) + \left( \pi - \frac{2}{3}\pi \right) + \left( \pi - \frac{2}{3}\pi \right) \\
			 & & + \sum_{i=1}^n \left( \pi - \frac{2}{3}\pi \right) + \sum_{i=1}^n \left( \pi - \frac{4}{3}\pi \right) \\
			 & &+ \sum_{j=1}^m \left( \pi - \frac{2}{3}\pi \right) + \sum_{j=1}^m \left( \pi - \frac{4}{3}\pi \right) - 2\pi \\
			 &=& - \frac{2}{3} \pi,
	\end{eqnarray*}
	which is impossible. Therefore no pair of curves in $\mathcal{D}(\widetilde{G_0})$ bounds a bigon. 

	The proof is complete. 
\end{proof}

\begin{lemma}
	Each closed curve in $\mathcal{D}(G_0)$ is homotopically non-trival. 
	\label{lem:mid_gph_incontractible}
\end{lemma}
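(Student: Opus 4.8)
The plan is to deduce this from Proposition \ref{prop:ess_mid_gph_univ} by lifting to the universal cover $\widetilde{X}$. Suppose for contradiction that some closed curve $\gamma \subset \mathcal{D}(G_0)$ is contractible in $X$. By Theorem \ref{thm:contractible}, $\gamma$ then bounds a disk, an embedded monogon, or an embedded bigon in $X$, and I will rule out each possibility in turn.

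If $\gamma$ bounds a disk $\Delta \subset X$, then $\gamma$ is simple; since $\gamma$ is null-homotopic, a lift $\widetilde{\gamma} \subset \mathcal{D}(\widetilde{G_0})$ is again a simple closed curve and $\Delta$ lifts homeomorphically to a disk bounded by $\widetilde{\gamma}$ in $\widetilde{X}$, contradicting Proposition \ref{prop:ess_mid_gph_univ}. If $\gamma$ bounds an embedded monogon, then in particular it bounds an immersed one, so by Lemma \ref{lem:lift_embed}(1) a lift $\widetilde{\gamma} \subset \mathcal{D}(\widetilde{G_0})$ bounds an embedded monogon in $\widetilde{X}$, again contradicting Proposition \ref{prop:ess_mid_gph_univ}. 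Finally, if $\gamma$ bounds an embedded (hence immersed) bigon whose two boundary arcs are sub-arcs $\gamma_1, \gamma_2$ of $\gamma$, then lifting that bigon, which is a disk, homeomorphically to $\widetilde{X}$ produces an embedded bigon whose two sides are lifts $\widetilde{\gamma_1}, \widetilde{\gamma_2}$ lying in $\mathcal{D}(\widetilde{G_0})$. If $\widetilde{\gamma_1}$ and $\widetilde{\gamma_2}$ belong to distinct components of $\mathcal{D}(\widetilde{G_0})$, this pair of curves bounds an embedded bigon, contradicting Proposition \ref{prop:ess_mid_gph_univ}; if they belong to the same component $\widetilde{\gamma}$, then $\widetilde{\gamma}$ has the two endpoints of these arcs as self-intersection points, and the innermost-loop argument from the proof of Lemma \ref{lem:lift_embed}(1) produces a sub-loop of $\widetilde{\gamma}$ bounding an embedded monogon (or an embedded disk, should the corner angle happen to equal $\pi$) in $\widetilde{X}$, once more contradicting Proposition \ref{prop:ess_mid_gph_univ}.

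The step that requires the most care is the bigon case: the bookkeeping of whether the two arcs of the bigon lift into one or two components of $\mathcal{D}(\widetilde{G_0})$, and the reduction of the one-component sub-case to a monogon or disk. All the genuinely geometric content has already been extracted in Proposition \ref{prop:ess_mid_gph_univ} through the Gauss-Bonnet area computations, so the remainder is a routine translation through the covering map. As an aside, one can bypass Theorem \ref{thm:contractible} entirely: since $\gamma$ is contractible, a lift $\widetilde{\gamma}$ is a closed loop in $\mathcal{D}(\widetilde{G_0})$, which is either simple, hence bounds a disk, or has a self-intersection, in which case an innermost choice of self-intersection yields a sub-loop bounding an embedded monogon or disk; either way Proposition \ref{prop:ess_mid_gph_univ} gives a contradiction.
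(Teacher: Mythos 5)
Your proposal is correct and uses the same pair of ingredients as the paper — Theorem \ref{thm:contractible} and Proposition \ref{prop:ess_mid_gph_univ} — but applies them in the opposite order. The paper lifts first: since $\gamma$ is contractible, a lift $\widetilde{\gamma}$ is already a closed loop in $\widetilde{X}$, so one applies Theorem \ref{thm:contractible} (or the Jordan curve theorem in the simple case) directly to $\widetilde{\gamma}$, getting an embedded disk, monogon, or bigon with itself upstairs, which contradicts Proposition \ref{prop:ess_mid_gph_univ} immediately. You instead apply Theorem \ref{thm:contractible} downstairs in $X$ and then lift the resulting disk, monogon, or bigon; this forces the extra bookkeeping in the bigon case about whether the two boundary arcs land in one or two components of $\mathcal{D}(\widetilde{G_0})$ and the further reduction via an innermost-loop argument. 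Your closing aside, which observes that the contractible loop lifts to a closed loop and one can argue there directly, is precisely the paper's cleaner route and dispenses with that case split. Both versions are valid; the lift-first order is simply less work.
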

\begin{proof}
	Suppose for contradiction that a closed curve $\gamma\subset \mathcal{D}(G_0)$ is contractible, then its lift $ \widetilde{\gamma} \subset \mathcal{D}( \widetilde{G_0})$ is a closed curve in $ \widetilde{X}$. So $ \widetilde{\gamma} $ is either a simple closed curve or a closed curve with self-intersections.  
	 If $ \widetilde{\gamma} $ has self-intersections, then it bounds an embedded monogon or bigon with itself by Theorem \ref{thm:contractible}. 
If $ \widetilde{\gamma} $ is a simple closed curve in $ \widetilde{X}$, then by the Jordan curve theorem, it bounds a disk in $ \widetilde{X}$.
	By Proposition \ref{prop:ess_mid_gph_univ}, each curve in $\mathcal{D}( \widetilde{G_0})$ does not bound any embedded disk or monogon or bigon with itself. This leads to a contradiction. 
	The proof is complete. 
\end{proof}

\begin{lemma}
	The dual $4$-valent graph $\mathcal{D}(G_0)$ of $G_0$ is in minimal position. 
	\label{lem:mid_gph_min_pos}
\end{lemma}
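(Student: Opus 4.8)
The plan is to combine the minimal–position criterion (Lemma \ref{lem:min_pos_crit}) with the lifting lemma (Lemma \ref{lem:lift_embed}) and the universal–cover obstruction already established in Proposition \ref{prop:ess_mid_gph_univ}. First I would record that the hypotheses of Lemma \ref{lem:min_pos_crit} are met: by Lemma \ref{lem:mid_gph_incontractible} every closed curve in $\mathcal{D}(G_0)$ is homotopically non-trivial, so it makes sense to speak of $\mathcal{D}(G_0)$ being in minimal position and the criterion applies.

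Next, argue by contradiction. Suppose $\mathcal{D}(G_0)$ is not in minimal position. By Lemma \ref{lem:min_pos_crit}, either some curve $\gamma \subset \mathcal{D}(G_0)$ bounds an immersed monogon, or some pair of curves $\gamma, \delta \subset \mathcal{D}(G_0)$ bounds an immersed bigon. In the first case, Lemma \ref{lem:lift_embed}(1) yields a lift $\widetilde{\gamma} \subset \mathcal{D}(\widetilde{G_0})$ that bounds an embedded monogon in $\widetilde{X}$; this contradicts the part of Proposition \ref{prop:ess_mid_gph_univ} asserting that no curve in $\mathcal{D}(\widetilde{G_0})$ bounds an embedded monogon. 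In the second case, Lemma \ref{lem:lift_embed}(2) says that either one of the lifts $\widetilde{\gamma}, \widetilde{\delta}$ bounds an embedded monogon, or $\widetilde{\gamma}$ and $\widetilde{\delta}$ together bound an embedded bigon in $\widetilde{X}$; both possibilities are excluded by Proposition \ref{prop:ess_mid_gph_univ}. Either way we reach a contradiction, so $\mathcal{D}(G_0)$ must be in minimal position.

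The proof is essentially a bookkeeping assembly of earlier results, so I do not expect a genuine obstacle here; all the real work (the Gauss–Bonnet area computations ruling out disks, monogons, and bigons in the universal cover, and the non-triviality of the curves) has already been done in Proposition \ref{prop:ess_mid_gph_univ} and Lemma \ref{lem:mid_gph_incontractible}. The only point requiring a little care is to make sure that when Lemma \ref{lem:min_pos_crit} produces an immersed monogon or bigon, its corners and boundary arcs are indeed built from curves of $\mathcal{D}(G_0)$ (as opposed to edges of $G_0$), so that the lifts land in $\mathcal{D}(\widetilde{G_0})$ and Proposition \ref{prop:ess_mid_gph_univ} is directly applicable; this is immediate from the statements as phrased.
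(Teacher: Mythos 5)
Your proof is correct and follows essentially the same route as the paper: invoke Lemma \ref{lem:mid_gph_incontractible} to ensure the criterion in Lemma \ref{lem:min_pos_crit} applies, argue by contradiction to obtain an immersed monogon or bigon, lift via Lemma \ref{lem:lift_embed} to an embedded one in $\widetilde{X}$, and contradict Proposition \ref{prop:ess_mid_gph_univ}. No substantive difference from the paper's argument.
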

\begin{proof}
	By Lemma \ref{lem:mid_gph_incontractible}, every closed curve in $\mathcal{D}(G_0)$ is  homotopically non-trival. 
	Suppose for contradiction that $\mathcal{D}(G_0)$ is not in minimal position. By Lemma \ref{lem:min_pos_crit}, this is equivalent to saying that there is either a $\alpha \in \mathcal{D}(G_0)$ bounding an immersed monogon, or a pair $\alpha,\beta \in \mathcal{D}(G_0)$ bounding an immersed bigon. On the other hand, by Lemma \ref{lem:lift_embed}, there is either a lift of $\alpha$ bounding an embedded monogon, or a lift of the pair $\alpha$ and $\beta$ bounding an embedded monogon or bigon. 
	But this leads to a contradiction to Proposition \ref{prop:ess_mid_gph_univ}. So $\mathcal{D}(G_0)$ is in minimal position and this lemma is proved. 
\end{proof}

\begin{proposition}
	The dual $4$-valent graph $\mathcal{D}(G_0)$ of $G_0$ is isotopic to a filling set of closed geodesics on $X$. 
	\label{prop:fill_geod}
\end{proposition}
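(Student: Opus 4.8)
The plan is to combine the structural facts already established for $\mathcal{D}(G_0)$ — that its components are homotopically non-trivial (Lemma \ref{lem:mid_gph_incontractible}) and in minimal position (Lemma \ref{lem:mid_gph_min_pos}) — with Theorem \ref{thm:isotopy_geod}, after first checking that $\mathcal{D}(G_0)$ itself fills $X$.

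\emph{Step 1: $\mathcal{D}(G_0)$ fills $X$.} By Proposition \ref{prop:min_prop}, $G_0$ is trivalent and $X\setminus G_0$ is a single convex geodesic polygon $F_0$. This lets one describe $X\setminus\mathcal{D}(G_0)$ explicitly: around each vertex $v$ of $G_0$ the three segments of $\mathcal{D}(G_0)$ joining the midpoints of the edges at $v$ cut off one small triangular disk containing $v$; the remaining region is obtained from the disk $F_0$ by shaving off a small corner at each of its vertices, hence is again a (convex) polygonal disk. Thus every complementary region is a disk, so $\mathcal{D}(G_0)$ fills $X$. If desired, computing $|V_{\mathrm{tri}}|,|E_{\mathrm{tri}}|,|F_{\mathrm{tri}}|$ of $\mathcal{D}(G_0)$ and comparing with $2-2g$ via the Euler characteristic confirms that there are exactly $|V(G_0)|+1$ such regions and no others.

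\emph{Step 2: straightening.} By Lemmas \ref{lem:mid_gph_incontractible} and \ref{lem:mid_gph_min_pos}, $\mathcal{D}(G_0)$ is a finite set of homotopically non-trivial closed curves in minimal position, so Theorem \ref{thm:isotopy_geod} provides an isotopy of $X$ carrying $\mathcal{D}(G_0)$, up to finitely many triangle moves, onto the closed multi-geodesic $\Gamma$ formed by the geodesic representatives of its components. It remains to see that $\Gamma$ fills $X$, i.e.\ that neither an ambient isotopy nor a triangle move can destroy the property ``the complement is a union of disks.'' For isotopies this is immediate. For a triangle move, which is supported in an embedded disk $B$ and replaces the three curve-arcs inside $B$ (forming one small triangle) by three arcs forming the oppositely placed small triangle, fixing the curve near and outside $\partial B$, I would argue locally: in either configuration the complement inside $B$ consists of the central triangular disk together with six ``corner'' regions, each a disk meeting $\partial B$ in a single arc, and the corner region adjacent to a given arc of $\partial B$ corresponds to the same arc before and after. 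Since attaching a disk to a surface along one boundary arc does not change the homeomorphism type, the complementary regions of the curve before and after the move are in type-preserving bijection; in particular ``filling'' is preserved. Combining Steps 1 and 2, $\Gamma$ fills $X$, which is the assertion.

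\emph{Main obstacle.} The delicate point is the last one: the bookkeeping that a triangle move preserves the topological type of every complementary region. It is elementary but must be carried out carefully with the local picture in $B$, taking into account that a single exterior region may meet several of the six boundary arcs of $B$ and thus absorb several corner disks simultaneously. (Alternatively, one can package this via intersection numbers: a triangle move is a homotopy of the components, hence preserves $i(\mathcal{D}(G_0),c)$ for every essential simple closed curve $c$; since a collection of essential curves in minimal position fills $X$ exactly when it has positive geometric intersection number with every essential simple closed curve, filling again transfers from $\mathcal{D}(G_0)$ to $\Gamma$.)
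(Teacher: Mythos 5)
Your proposal is correct and follows essentially the same route as the paper's proof: both establish that $\mathcal{D}(G_0)$ already cuts $X$ into disks (your Step 1 fills in the explicit combinatorial description of $X\setminus\mathcal{D}(G_0)$ as $|V(G_0)|$ triangles plus one corner-shaved polygon, which the paper leaves as an observation), and both then apply Theorem \ref{thm:isotopy_geod} together with the observation that a triangle move only cuts or pastes a disk along a boundary arc of a complementary region, hence preserves the topology of the complement. Your parenthetical alternative via geometric intersection numbers is a legitimate packaging of the same step, not a genuinely different strategy.
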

\begin{proof}
	A triangle move to curves in $\mathcal{D}(G_0)$ (illustrated in Figure \ref{fig:triangle_move}) is an isotopy of the curves on $X$. On the other hand, observe that $\mathcal{D}(G_0)$ cuts $X$ into disks. 
	A triangle move to $\mathcal{D}(G_0)$ does not change the topology of the complementary disks because it cuts or pastes a disk along an arc in the disk's boundary to some disks in $X \backslash\mathcal{D}(G_0)$ (see Figure \ref{fig:triangle_move_3}). Then by Theorem \ref{thm:isotopy_geod} we have that $\mathcal{D}(G_0)$ is isotopic to a filling set of closed geodesics on $X$. The proof is complete. 
	\begin{figure}[htbp]
		\centering
		\includegraphics{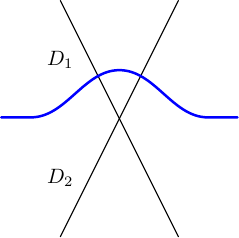}
		\includegraphics{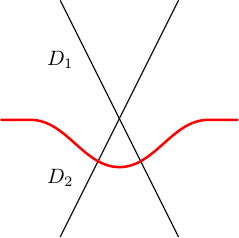}
		\includegraphics{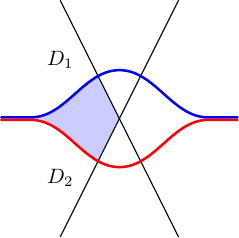}
		\caption{The triangle move cuts the shadowed disk from $D_2$ and pastes it to $D_1$}
		\label{fig:triangle_move_3}
	\end{figure}
\end{proof}

Now we are ready to finish the proof of Proposition \ref{prop:ess_mid_gph}.

\begin{proof}[Proof of Proposition \ref{prop:ess_mid_gph}]
	The result clearly follows from Lemma \ref{lem:mid_gph_incontractible}, Lemma \ref{lem:mid_gph_min_pos} and Proposition \ref{prop:fill_geod}.
\end{proof}

Now we give an estimate to $\ell(\mathcal{D}(G_0))$ in terms of $\ell(G_0)$. We first provide the following elementary inequality. 
\begin{lemma}
	For a geodesic triangle $\triangle ABC$ and a point $P$ in the interior of $\triangle ABC$ (see Figure \ref{fig:fermat_2}), then
	\[
		|AB|+|BC|+|CA| < 2(|PA|+|PB|+|PC|),
	\]
    where $|\cdot|$ means distance.
	\label{lem:triangle_estimate}
\end{lemma}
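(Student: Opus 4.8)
The plan is to reduce the inequality to three applications of the triangle inequality in $\mathbb{H}$, one for each side of $\triangle ABC$ ``routed through'' the interior point $P$. Since $\mathbb{H}$ is a geodesic metric space, for the three points $A,B,P$ we have $|AB|\le |PA|+|PB|$, and similarly $|BC|\le |PB|+|PC|$ and $|CA|\le |PC|+|PA|$. Summing these three inequalities gives
\[
|AB|+|BC|+|CA| \le 2\bigl(|PA|+|PB|+|PC|\bigr),
\]
which is the asserted bound apart from the strictness.

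To upgrade this to a strict inequality I would use that $\mathbb{H}$ is uniquely geodesic: equality in $|AB|\le |PA|+|PB|$ holds if and only if $P$ lies on the geodesic segment $\overline{AB}$, and likewise for the other two. Since $P$ is assumed to lie in the \emph{interior} of $\triangle ABC$, it lies on none of the sides $\overline{AB}$, $\overline{BC}$, $\overline{CA}$, so each of the three triangle inequalities above is strict; adding strict inequalities gives the strict conclusion. In fact it already suffices that $P$ avoid one of the three sides, so the hypothesis that $P$ is interior is comfortably more than enough.

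I do not expect a real obstacle here: the only step that deserves a sentence of care is the characterization of the equality case in the triangle inequality, which is standard in $\mathbb{H}$ (indeed in any uniquely geodesic, hence $\mathrm{CAT}(0)$, space). One could equivalently phrase the whole argument purely metrically for four points of $\mathbb{H}$; the triangle $\triangle ABC$ and the word ``interior'' enter only to guarantee that $P$ lies off all three sides.
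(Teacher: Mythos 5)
Your argument is correct and matches the paper's proof exactly: both apply the triangle inequality in $\triangle PAB$, $\triangle PBC$, $\triangle PCA$ and sum, with strictness coming from $P$ lying off each side. You just spell out the strictness slightly more carefully than the paper does.
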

\begin{figure}[htbp]
	\centering
	\includegraphics{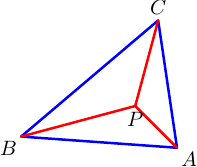}
	\caption{$\triangle ABC$ and $P$}
	\label{fig:fermat_2}
\end{figure}
\begin{proof}
	In $\triangle PAB$, $\triangle PBC$ and $\triangle PCA$, we have
	\begin{eqnarray*}
		|AB| &<& |PA|+|PB| \\
		|BC| &<& |PB|+|PC| \\
		|CA| &<& |PC|+|PA|. 
	\end{eqnarray*}
	Summing the three inequalities up, we prove the lemma. 
\end{proof}
\begin{proposition}
    Let $G_0$ be a graph that realizes the minimal length of graphs in $\mathcal{G}$, and let $\mathcal{D}(G_0)$ be the dual $4$-valent graph of $G_0$. Then 
	\[
		\ell(G_0)<\ell(\mathcal{D}(G_0)) < 2\ell(G_0). 
	\]
	\label{prop:graph_estimate}
\end{proposition}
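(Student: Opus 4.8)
The plan is to split $\mathcal{D}(G_0)$ into pieces indexed by the vertices of $G_0$ and to compare, at each vertex, the three edges of $\mathcal{D}(G_0)$ near it with the three half-edges of $G_0$ there. By Proposition~\ref{prop:min_prop}, $G_0$ is trivalent, every angle between two edges at a vertex equals $\frac{2}{3}\pi$, and $X\setminus G_0$ is a single geodesic polygon, which is convex because all its interior angles equal $\frac{2}{3}\pi<\pi$; thus $\mathcal{D}(G_0)$ is defined. Fix $v\in V(G_0)$, let $e_1,e_2,e_3$ be the edges at $v$ and $m_1,m_2,m_3$ their midpoints. By Definition~\ref{def:dual_4_valent_graph} the edges of $\mathcal{D}(G_0)$ emanating near $v$ are exactly the three geodesic segments $m_1m_2$, $m_2m_3$, $m_3m_1$, the sides of $\triangle m_1m_2m_3$; and since two distinct edges of $G_0$ meet in at most one vertex, each edge of $\mathcal{D}(G_0)$ arises this way from a \emph{unique} vertex of $G_0$. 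As $|vm_i|=\frac{1}{2}\ell(e_i)>0$ and $\angle m_ivm_j=\frac{2}{3}\pi<\pi$, each $\triangle vm_im_j$ is an honest hyperbolic triangle, embedded in the convex polygon $X\setminus G_0$, with apex angle $\frac{2}{3}\pi$ at $v$ and legs of lengths $\frac{1}{2}\ell(e_i)$, $\frac{1}{2}\ell(e_j)$; moreover $v$ lies in the interior of $\triangle m_1m_2m_3$, since the segments $vm_1,vm_2,vm_3$ leave $v$ in three directions making pairwise angles $\frac{2}{3}\pi$ (so $v$ is the Fermat point of $\triangle m_1m_2m_3$, cf.\ Theorem~\ref{thm:fermat}). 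This gives the identity
\[
	\ell(\mathcal{D}(G_0))=\sum_{v\in V(G_0)}\bigl(|m_1m_2|+|m_2m_3|+|m_3m_1|\bigr).
\]

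For the upper bound I would apply Lemma~\ref{lem:triangle_estimate} to $\triangle m_1m_2m_3$ with the interior point $v$, obtaining $|m_1m_2|+|m_2m_3|+|m_3m_1|<2(|vm_1|+|vm_2|+|vm_3|)=\ell(e_1)+\ell(e_2)+\ell(e_3)$ at each $v$; summing over $v$ and noting that each edge of $G_0$, having two endpoints, is counted twice, yields $\ell(\mathcal{D}(G_0))<2\ell(G_0)$. For the lower bound I would use the hyperbolic law of cosines in $\triangle vm_im_j$: with $\cos\frac{2}{3}\pi=-\frac{1}{2}$,
\[
	\cosh|m_im_j|=\cosh\tfrac{\ell(e_i)}{2}\cosh\tfrac{\ell(e_j)}{2}+\tfrac{1}{2}\sinh\tfrac{\ell(e_i)}{2}\sinh\tfrac{\ell(e_j)}{2}>\cosh\tfrac{\ell(e_i)}{2},
\]
so $|m_im_j|>\frac{1}{2}\ell(e_i)$ (the side opposite the $\frac{2}{3}\pi$-angle is strictly the longest). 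Pairing $m_1m_2\leftrightarrow e_1$, $m_2m_3\leftrightarrow e_2$, $m_3m_1\leftrightarrow e_3$ at each $v$ gives $|m_1m_2|+|m_2m_3|+|m_3m_1|>\frac{1}{2}(\ell(e_1)+\ell(e_2)+\ell(e_3))$, and summing over $v$ gives $\ell(\mathcal{D}(G_0))>\frac{1}{2}\cdot 2\ell(G_0)=\ell(G_0)$.

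The step I expect to need the most care is not any estimate but the bookkeeping behind the displayed identity for $\ell(\mathcal{D}(G_0))$: one must check that every edge of $\mathcal{D}(G_0)$ really joins the midpoints of two edges of $G_0$ with a common endpoint, that this edge lies inside the convex polygon $X\setminus G_0$ (so that $\triangle vm_im_j$ is embedded and planar hyperbolic trigonometry applies), and that no edge of $\mathcal{D}(G_0)$ gets counted at two different vertices. These all follow from Definition~\ref{def:dual_4_valent_graph}, Proposition~\ref{prop:min_prop} (which makes $X\setminus G_0$ a single convex geodesic polygon with all interior angles $\frac{2}{3}\pi$), and the standing convention that $G_0$ has at most one edge between any two vertices; once they are in place, both inequalities are immediate from Lemma~\ref{lem:triangle_estimate} and elementary hyperbolic trigonometry.
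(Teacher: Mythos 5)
Your proof is correct and takes essentially the same approach as the paper: you decompose $\mathcal{D}(G_0)$ into the triangles $\triangle m_1m_2m_3$ attached to each vertex $v$ of $G_0$ (the paper phrases this dually, as the triangular faces $T_i$ of $X\setminus\mathcal{D}(G_0)$ with Fermat points $v_i$), and the upper bound is exactly Lemma~\ref{lem:triangle_estimate} applied per vertex. For the lower bound the paper simply asserts $\ell(G_0)<\ell(\mathcal{D}(G_0))$ is clear; your law-of-cosines argument showing the side opposite the $\frac{2}{3}\pi$-angle exceeds each leg is a fine explicit justification of that step.
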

\begin{proof}
	It is clear that $\ell(G_0)<\ell(\mathcal{D}(G_0))$. It remains to show the RHS inequality.
	Recall that by Proposition \ref{prop:min_prop}, we know that $G_0$ is a trivalent graph that cuts $X$ into one polygon, and the angle at each vertex is $ \frac{2}{3}\pi$. Then $X \backslash \mathcal{D}(G_0)$ consists of a polygon $D$ and certain geodesic triangles $T_1$, $T_2$,..., $T_n$ (see \emph{e.g.} Figure \ref{fig:midptgph}). The number $n$ of geodesic triangles is equal to $|V(G_0)|$, and each vertex $v_i \in V(G_0)$ is the Fermat point of $T_i$. Treat $\mathcal{D}(G_0)$ as a $4$-valent graph in $X$. Observe that for each $e \in E(\mathcal{D}(G_0))$, the face on one side of $e$ is the polygon $D$, and the other side is one of the geodesic triangles  $T_1$, $T_2$, ..., $T_n$. Therefore, $$\ell(\mathcal{D}(G_0)) = \sum_{i=1}^n \perim(T_i),$$ where $\perim(T_i)$ is the perimeter of $T_i$. On the other hand, $G_0$ is disjoint from the interior of $D$. Restricted to each $T_i$, $G_0$ consists of a vertex $v_i$, the Fermat point of $T_i$, and three edges joining $v_i$ and the three vertices of $T_i$, respectively. Denote the sum of these three edges as $f(T_i)$. Then $\ell(G_0) = \sum_{i=1}^n f(T_i)$. Therefore, by Lemma \ref{lem:triangle_estimate} we have
	\begin{eqnarray*}
		\ell(\mathcal{D}(G_0)) = \sum_{i=1}^n \perim(T_i) < 2 \sum_{i=1}^n f(T_i) = 2\ell(G_0).
	\end{eqnarray*}
	The proof is complete. 
\end{proof}

Recall that a geodesic triangulation of a closed hyperbolic surface consists of geodesic triangles such that each of them bounds a disk. For the remainder of this section, we prove the following result on the triangulation of hyperbolic surfaces similar to Proposition \ref{prop:ess_mid_gph}. This will be applied to study the asymptotic behavior of $\slfs$ on Brooks-Makover random surfaces, \emph{i.e.,}  prove Theorem \ref{mt-3}.
\begin{proposition}
	Let $X \in \mathcal{M}_g$ and $\mathcal{T}$ be a geodesic triangulation of $X$. If the degree of every vertex of $\mathcal{T}$ is at least $6$, then 
    \begin{enumerate}
        \item each closed curve in its dual $4$-valent graph  $\mathcal{D}(\mathcal{T})$ is homotopically non-trivial.
        \item The curves in $\mathcal{D}(\mathcal{T})$ are in minimal position. 
        \item The dual $4$-valent graph $\mathcal{D}(\mathcal{T})$ is isotopic to a filling set of closed geodesics in $X$. 
    \end{enumerate}
	\label{prop:ess_tri_mid_gph}
\end{proposition}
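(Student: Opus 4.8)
The plan is to reproduce, essentially step for step, the argument of this section for the shortest filling graph $G_0$, with the hypothesis that every vertex of $\mathcal{T}$ has degree $\ge 6$ playing the role that Proposition \ref{prop:min_prop} (all angles of $G_0$ equal $\tfrac23\pi$) plays there. First, Lemma \ref{lem:lift_embed} applies verbatim with $\mathcal{T}$ in place of $G$: its proof uses only covering-space theory together with the fact that $\mathcal{D}(\mathcal{T})$ is a finite union of closed curves, and $\mathcal{D}(\mathcal{T})$ is defined because the complementary regions $X\setminus\mathcal{T}$ are geodesic triangles, hence convex. Granting the universal-cover statement below, conclusions $(1)$, $(2)$ and $(3)$ then follow exactly as Lemma \ref{lem:mid_gph_incontractible}, Lemma \ref{lem:mid_gph_min_pos} and Proposition \ref{prop:fill_geod} follow from Proposition \ref{prop:ess_mid_gph_univ}: a contractible curve lifts to a closed curve bounding a disk, an embedded monogon or an embedded bigon; a failure of minimal position produces, via Lemma \ref{lem:min_pos_crit} and Lemma \ref{lem:lift_embed}, an embedded monogon or bigon upstairs; and a triangle move on $\mathcal{D}(\mathcal{T})$ is an isotopy of $X$ that preserves the disk type of the complementary regions, so Theorem \ref{thm:isotopy_geod} produces the isotopy to a filling closed multi-geodesic.

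The heart of the proof is thus the analogue of Proposition \ref{prop:ess_mid_gph_univ}: \emph{in $\mathcal{D}(\widetilde{\mathcal{T}})$ no curve bounds a disk or an embedded monogon, and no pair of curves bounds an embedded bigon}. The key structural observation is that $\mathcal{D}(\widetilde{\mathcal{T}})$ cuts $\widetilde X$ into two kinds of geodesic polygons: one \emph{medial triangle} $M_T$ inside each face $T$ of $\widetilde{\mathcal{T}}$ (a geodesic triangle, angle sum $<\pi$) and one \emph{vertex polygon} $Q_v$ around each vertex $v$ of $\widetilde{\mathcal{T}}$, which is a geodesic $\deg(v)$-gon with $\deg(v)\ge 6$; moreover every edge of $\mathcal{D}(\widetilde{\mathcal{T}})$ borders exactly one $M_T$ and one $Q_v$. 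Since the boundary of an embedded disk (resp.\ monogon, bigon) lies in the $1$-skeleton $\mathcal{D}(\widetilde{\mathcal{T}})$, such a region $\Delta$ is a union of finitely many of the cells $M_T$ and $Q_v$. The angle of a medial triangle and of a vertex polygon at each midpoint vertex can be expressed in terms of the angles of $\mathcal{T}$; feeding these into the Gauss--Bonnet formula $\area(\Delta)=(k-2)\pi-\sum_i\theta_i$ (where $\theta_1,\dots,\theta_k$ are the interior angles of $\Delta$ at the midpoint corners $m_1,\dots,m_k$) and using that the $\deg(v)\ge 6$ triangle angles around each interior vertex $v$ of $\widetilde{\mathcal{T}}$ sum to $2\pi$, one obtains $\area(\Delta)\le 0$, a contradiction. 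The monogon and bigon cases are treated by the same case analysis as Cases~1--3 in the proof of Proposition \ref{prop:ess_mid_gph_univ}, carrying the one, resp.\ two, extra corner contributions.

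The step I expect to be the main obstacle is exactly this Gauss--Bonnet computation, i.e., producing the bound $\sum_i\theta_i\ge (k-2)\pi$. This requires enumerating the finitely many local patterns in which a curve of $\mathcal{D}(\widetilde{\mathcal{T}})$ traverses the four half-edges at a midpoint vertex $m_i$ — each pattern determines which $M_T$ and which $Q_v$ abut $\Delta$ at $m_i$, hence the value of $\theta_i$ in terms of angles of $\mathcal{T}$ — together with a combinatorial bookkeeping along $\partial\Delta$ of how consecutive patterns fit, so that the local angle contributions telescope against the triangle angle sums around the enclosed vertices; the hypothesis ``$\deg\ge 6$'' — the exact threshold at which the vertex polygons $Q_v$ stop being combinatorially positively curved — is precisely what makes the resulting estimate close. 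One must also handle the extra corner of an embedded monogon and the two corners of an embedded bigon, including the possibility, as in the figures for Cases~1--3 of Proposition \ref{prop:ess_mid_gph_univ}, that an edge of $\widetilde{\mathcal{T}}$ runs along $\partial\Delta$ at such a corner.
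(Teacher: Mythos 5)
The structural frame of your plan is correct and matches the paper: Lemma \ref{lem:lift_embed} transfers verbatim, and once the universal-cover statement (no disk, embedded monogon, or embedded bigon bounded by curves of $\mathcal{D}(\widetilde{\mathcal{T}})$) is established, conclusions (1)--(3) are proved exactly as Lemma \ref{lem:mid_gph_incontractible}, Lemma \ref{lem:mid_gph_min_pos}, and Proposition \ref{prop:fill_geod} are deduced from Proposition \ref{prop:ess_mid_gph_univ}. Your cell decomposition of $\mathcal{D}(\widetilde{\mathcal{T}})$ into medial triangles $M_T$ and vertex polygons $Q_v$ is also correct. But the tool you propose for the core estimate is the wrong one, and I don't think the computation you flag as ``the main obstacle'' can actually be closed the way you describe.

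The problem is that you want to run the Gauss--Bonnet \emph{area} argument from Proposition \ref{prop:ess_mid_gph_univ}, feeding in the angles of $\Delta$ at the midpoint corners. In that earlier proposition those angles are \emph{known}: Proposition \ref{prop:min_prop} forces every angle of $G_0$ to be $\frac{2}{3}\pi$, so the interior angles of the disk $D$ cut out of $\widetilde{G_0}$ are exactly $\frac{2}{3}\pi$ or $\frac{4}{3}\pi$, and the computation is a finite sum. Here the hypothesis is purely combinatorial: $\deg(v)\ge 6$. It gives no control whatsoever over the hyperbolic angles $\theta_i$ of $\Delta$ at the midpoints. In hyperbolic geometry the medial triangle of a geodesic triangle is not similar to it, and its angles, as well as the angles of the vertex polygons $Q_v$, depend on the metric of $\mathcal{T}$ and can be essentially arbitrary (e.g.\ a very thin triangle has medial angles near $0$ and $\pi$); your claim that ``the angle of a medial triangle and of a vertex polygon at each midpoint vertex can be expressed in terms of the angles of $\mathcal{T}$'' is what fails. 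So the identity $\area(\Delta)=(k-2)\pi-\sum_i\theta_i$ by itself gives no contradiction: it only says $\sum_i\theta_i<(k-2)\pi$, which is consistent. Note also that applying geometric Gauss--Bonnet to the triangulated subdisk $D\subset\Delta$ merely reproduces $|F|=n-2+2|V_{\mathrm{int}}|$, which is just Euler characteristic in disguise and again uses no angle bounds --- a second, genuinely combinatorial, relation is needed.

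What actually closes the argument (and is what the paper does) is to drop the hyperbolic metric entirely and work with the Euler characteristic of the triangulated disk $D$ formed by the triangles of $\widetilde{\mathcal{T}}$ inside $\Delta$. Doubling $D$ along its boundary to a triangulated sphere gives $3|F| = 2|E| - n$; combining with $|V|-|E|+|F|=1$ and $\sum_v\deg(v)=2|E|$ yields $|V|-\tfrac{n}{3}-\tfrac16\sum_v\deg(v)=1$. Then one inserts the degree lower bounds: interior vertices of $D$ have degree $\ge 6$, and boundary vertices have degree $\ge 4$ inside $D$ (since at most two of their $\ge 6$ edges in $\widetilde{\mathcal{T}}$ lie outside $D$), with the obvious adjustments ($\ge 3$ or $\ge 2$) at the one or two corner vertices in the monogon and bigon cases, exactly following your Cases~1--3. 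This immediately forces $1\le 0$ (disk), $1\le\tfrac16$ or $\tfrac13$ (monogon), $1\le\tfrac13,\tfrac12,\tfrac23$ (bigon). Your intuition that $\deg\ge 6$ is the ``combinatorially non-positively curved'' threshold is exactly right, but it must be cashed out combinatorially rather than through the hyperbolic angles of the dual cells.
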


The core of the proof of Proposition \ref{prop:ess_tri_mid_gph} is the following proposition, similar to Proposition \ref{prop:ess_mid_gph_univ}. For $X \in \mathcal{M}_g$ and a geodesic triangulation $\mathcal{T}$ of $X$, denote the universal cover of $X$ as $ \widetilde{X}$ and denote the pre-image of $\mathcal{T}$ and $\mathcal{D}(\mathcal{T})$ as $ \widetilde{\mathcal{T}}$ and $\mathcal{D}( \widetilde{\mathcal{T}})$, respectively. Then we prove
\begin{proposition}
	Assume $\mathcal{T}$ is a geodesic triangulation of $X$ that each vertex of $\mathcal{T}$ has degree at least $6$, then in $\mathcal{D}( \widetilde{\mathcal{T}})$, each closed curve does not bound any disk or embedded monogon, and each pair of curves does not bound any embedded bigon. 

	\label{prop:ess_tri_mid_gph_univ}
\end{proposition}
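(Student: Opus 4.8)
The plan is to mirror the proof of Proposition~\ref{prop:ess_mid_gph_univ}, but since the triangular faces of $\mathcal{T}$ no longer have prescribed angles, the hyperbolic area computation of the $G_0$ case must be replaced by a \emph{combinatorial} Gauss--Bonnet argument that uses the hypothesis $\deg\ge 6$. As for $G_0$, every edge of $\widetilde{\mathcal T}$ is a geodesic, every face is a genuine hyperbolic triangle (so its angle sum is $<\pi$), and every curve of $\mathcal D(\widetilde{\mathcal T})$ is a geodesic polygon whose corners lie at midpoints of edges of $\widetilde{\mathcal T}$ and whose sides are midsegments of faces, each side cutting off the corner of a face at the vertex shared by the two edges whose midpoints it joins.

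\textbf{The disk case.} Suppose $\gamma\subset\mathcal D(\widetilde{\mathcal T})$ bounds an embedded disk $\Delta$. As in Proposition~\ref{prop:ess_mid_gph_univ} I would pick the edges of $\widetilde{\mathcal T}$ that $\gamma$ crosses; consecutive crossed edges share a vertex, so these edges and shared vertices form a closed edge path $b_0b_1\cdots b_{k-1}$ in $\widetilde{\mathcal T}$ bounding a region $D$ on the side of $\Delta$. Since $\partial D$ is a union of edges of $\widetilde{\mathcal T}$ and $\widetilde X$ is a disk, $D$ is a union of whole faces of $\widetilde{\mathcal T}$, and $D=\Delta\cup(\text{the corner triangles cut off by }\gamma)$, so $D$ is topologically a disk (allowing finitely many pinch points at repeated vertices of the path, in which case $D$ is a wedge of disks and $\chi(D)\ge 1$). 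Equip $D$ with its combinatorial curvature $\kappa(v)=6-\#\{\text{faces of }D\text{ at }v\}$ at interior vertices and $\kappa(v)=3-\#\{\text{faces of }D\text{ at }v\}$ at boundary vertices, so that $\sum_{v\in D}\kappa(v)=6\chi(D)\ge 6$. But an interior vertex $v$ of $D$ is an interior vertex of $\widetilde{\mathcal T}$, hence all $\deg_{\widetilde{\mathcal T}}(v)\ge 6$ of its faces lie in $D$ and $\kappa(v)\le 0$; and a boundary vertex $b_j$ is incident to the three distinct faces $\triangle b_{j-2}b_{j-1}b_j$, $\triangle b_{j-1}b_jb_{j+1}$, $\triangle b_jb_{j+1}b_{j+2}$ of $\widetilde{\mathcal T}$, all contained in $D$ (as $D$ is a union of whole faces and $\gamma$ runs through each of them), so $\kappa(b_j)\le 0$. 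Thus $\sum_v\kappa(v)\le 0<6$, a contradiction; hence no curve bounds a disk.

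\textbf{The monogon and bigon cases.} By Lemma~\ref{lem:lift_embed}, if some curve bounds an immersed monogon, or a pair bounds an immersed bigon, then on $\widetilde X$ one obtains either a lift bounding an embedded monogon or two lifts bounding an embedded bigon. I would repeat the construction of $D$ for the boundary of this monogon/bigon. The only new feature is at each corner, which sits at a midpoint $p_*$: near $p_*$ the boundary curve runs along two sides of the medial triangle of the face $S_*$ containing $p_*$'s edge, i.e.\ it doubles back inside $S_*$, so the edge path detours once around $\partial S_*$ and $D$ acquires $S_*$ as one extra face, attached at a single vertex $w$. Running the same bookkeeping, interior vertices and ``ordinary'' boundary vertices still have $\kappa\le 0$; the only possibly positive contributions are at the vertices of the one (resp.\ two) extra faces, and the re-entry vertex $w$ of each such face, being also incident to a face of the rest of $D$, has even smaller $\kappa$. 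A short inspection of the finitely many corner configurations --- exactly paralleling the two cases of Figure~\ref{fig:monogon} and the three cases of Figure~\ref{fig:bigon_p} in the $G_0$ argument --- shows this bounded excess stays strictly below the $6\chi(D)\ge 6$ demanded by combinatorial Gauss--Bonnet, which yields the contradiction.

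\textbf{Where the difficulty lies.} The disk case is the clean part: the hypothesis $\deg\ge 6$ is precisely what makes every interior vertex of $D$ combinatorially nonpositively curved, so it closes at once, and the monogon case is only a mild elaboration. The genuinely delicate point is the corner bookkeeping for the bigon: one must check, configuration by configuration, that the excess combinatorial curvature created at the two bigon corners never sums to $6$; this is the step where a small finite case analysis, parallel to the one performed for $\mathcal D(\widetilde{G_0})$, appears unavoidable, and it is the main obstacle I would expect to spend effort on.
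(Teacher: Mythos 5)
Your overall strategy — replace the hyperbolic Gauss--Bonnet computation used for $\mathcal{D}(\widetilde{G_0})$ by a combinatorial Euler-characteristic count, and exploit the hypothesis $\deg\geq 6$ to make interior vertices nonpositively curved — is exactly the strategy the paper uses; the identity you write as $\sum_v\kappa(v)=6\chi(D)$ is a rescaling of the paper's relation $|V|-\tfrac{1}{3}n-\tfrac{1}{6}\sum_v\deg(v)=\chi(D)$. However, there is a concrete gap in your bookkeeping at boundary vertices, already in the disk case.

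The gap is in the claim ``$D=\Delta\cup(\text{the corner triangles cut off by }\gamma)$, so $D$ is a union of whole faces,'' and the ensuing assertion that the three faces $\triangle b_{j-2}b_{j-1}b_j$, $\triangle b_{j-1}b_jb_{j+1}$, $\triangle b_jb_{j+1}b_{j+2}$ all lie in $D$. Let $T_j$ denote the face of $\widetilde{\mathcal T}$ in which $\gamma$ runs along a midsegment cutting off the corner at $b_j$; then indeed $T_j=\triangle b_{j-1}b_jb_{j+1}$. But $T_{j-1}$ and $T_j$ lie on \emph{opposite} sides of the crossed edge $b_{j-1}b_j$, and your $D$ is bounded by the crossed-edge path, so along each boundary edge exactly one of the two adjacent faces lies in $D$. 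It follows that the $T_j$'s alternate in and out of $D$, and consequently $D$ does \emph{not} contain $\Delta$ (the crossed-edge path and $\gamma$ cross transversally at every midpoint, so $D$ and $\Delta$ interleave rather than nest). At the boundary vertices $b_j$ with $T_j\subset D$, the only face of $D$ incident to $b_j$ is $T_j$ itself, so $\kappa(b_j)=3-1=2>0$ there, directly contradicting your claimed $\kappa(b_j)\le 0$. (The argument can be rescued: at the alternate vertices where $T_j\not\subset D$ one has $\kappa(b_j)=3-(\deg_{\widetilde{\mathcal T}}(b_j)-1)\le -2$ using $\deg\ge 6$, and the positive and negative contributions pair off to give $\sum\kappa\le 0<6$ — but this alternating cancellation is not what you wrote, and you never invoke $\deg\ge 6$ at boundary vertices at all.) The paper avoids this by taking $D$ to be only the triangles fully contained in $\Delta$, for which it shows every boundary vertex has degree $\ge 4$ in $D$. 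Finally, the monogon and bigon cases, where the paper does a genuine finite case analysis with explicit degree lower bounds ($\ge 3$ or $\ge 2$) at the corner vertices, are left as an unchecked assertion in your proposal; this is precisely the part you correctly identify as delicate, but it remains unproved.
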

\begin{proof}
	Suppose that a closed curve $\gamma \subset \mathcal{D}(\mathcal{ \widetilde{T}})$ bounds a disk. All triangles in $\mathcal{ \widetilde{T}}$ fully contained in this disk form a triangulated disk $D$ (the disk bounded by the path $v_1v_2...v_nv_1$ in Figure \ref{fig:disk_t}). We reach a contradiction by calculating the Euler characteristic of $D$. 

	\begin{figure}[htbp]
		\centering
		\includegraphics{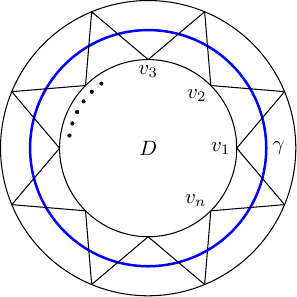}
		\caption{The curve $\gamma$ and the triangulated disk $D$}
		\label{fig:disk_t}
	\end{figure}

	Denote the sets of vertices, edges, and faces of $D$ by $V$, $E$, and $F$, respectively. Among all vertices, there are $n$ vertices $\{ v_1, v_2, ..., v_n \}$ in $ \partial D$ and $|V|-n$ vertices in the interior of $D$. Hence, the number of edges in $ \partial D$ is also $n$. 
	The Euler characteristic of $D$ is 
	\begin{equation}
		|V|-|E|+|F| = \chi(D) = 1. 
		\label{for:euler}
	\end{equation}

	Take two copies of $D$ and glue them along their boundaries, then we get a triangulated sphere $S^2$. The number of vertices, edges, and faces of $S^2$ is $2(|V|-n)+n$, $2(|E|-n)+n$ and $2|F|$ respectively. Since $S^2$ is a  triangulated closed surface, 
	\begin{eqnarray}
		3(2|F|) &=& 2(2(|E|-n) + n) \nonumber \\
		\text{namely, \, }3|F|  &=& 2|E| - n. \label{for:edge_face}
	\end{eqnarray}
	Let $\deg(v)$ be the degree of a vertex $v\in V$. Then 
	\begin{equation}
		\sum_{v\in V} \deg(v) = 2|E|. 
		\label{for:vertex_edge}
	\end{equation}
	Inserting (\ref{for:edge_face}) and (\ref{for:vertex_edge}) into (\ref{for:euler}), we obtain
	\begin{equation}
		|V| - \frac{1}{3} n - \frac{1}{6}\sum_{v\in V} \deg(v) = 1. \label{for:final_euler}
	\end{equation}

	By our assumption, we know that for any vertex $v$ in the interior of $D$, $\deg(v) \ge 6$. For a vertex $v_i$ in $ \partial D$, if we treat it as a vertex of the graph $ \widetilde{\mathcal{T}}$, then it has degree at least $6$. But two edges in $ \widetilde{\mathcal{T}}$ at $v_i$ are outside of $D$, therefore, in $D$, $\deg(v_i) \ge 4$ for $v_i \in \partial D$. 
	Then
	\begin{eqnarray*}
		1 &=& |V| - \frac{1}{3} n - \frac{1}{6}\sum_{v\in V} \deg(v) \\
		  &=&|V| - \frac{1}{3} n - \frac{1}{6}\sum_{v\in \partial D} \deg(v) - \frac{1}{6}  \sum_{v\in D \backslash\partial D} \deg(v) \\
		  & \le & |V| - \frac{1}{3} n - \frac{1}{6} \cdot(4n) - \frac{1}{6}\cdot 6(|V| - n) \\
		  &=&0,
	\end{eqnarray*}
	which leads to a contradiction. 

	Suppose that a curve $\gamma\subset \mathcal{D}( \widetilde{\mathcal{T}})$ bounds a monogon. Let $D$ be the triangulated disk consisting of triangles inside the monogon. Again we use \eqref{for:final_euler} and the lower bounds of the degree of vertices in $D$ to get a contradiction. 
	According to the directions of the edge of $ \widetilde{\mathcal{T}}$ passing through the intersection point of the monogon, we classify $D$ into two types shown in Figure \ref{fig:monogon_t}. 

\begin{figure}[htbp]
	\centering
	\begin{subfigure}[htbp]{.45\textwidth}
	\begin{center}
		\includegraphics{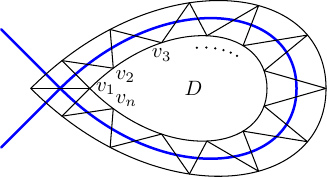}
	\end{center}
	\caption{Case 1}
	\label{fig:monogon_t_1}
	\end{subfigure}
	\begin{subfigure}[htbp]{.45\textwidth}
	\begin{center}
		\includegraphics{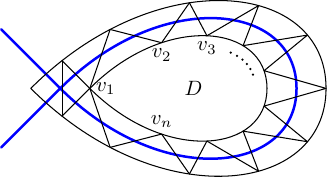}
	\end{center}
	\caption{Case 2}
	\label{fig:monogon_t_2}
	\end{subfigure}
	
	\caption{The monogon and the trinagulated disk $D$}
	\label{fig:monogon_t}
\end{figure}

In case 1 (see Figure \ref{fig:monogon_t_1}), there are $|V| -n$ vertices in the interior of $D$ and therefore each of them has degree at least $6$. In $ \partial D$, $v_2, v_3,..., v_n$ have degree at least $4$, and $v_1$ has degree at least $3$. Therefore,  it follows from \eqref{for:final_euler} that
	\begin{eqnarray*}
		1 &=& |V| - \frac{1}{3} n - \frac{1}{6}\sum_{v\in V} \deg(v) \\
		  &=&|V| - \frac{1}{3} n  - \frac{1}{6}  \sum_{v\in D \backslash\partial D} \deg(v) - \frac{1}{6}\sum_{i=2}^n \deg(v_i) - \frac{1}{6}\deg(v_1) \\
		  & \le & |V| - \frac{1}{3} n  - \frac{1}{6}\cdot 6(|V| - n) - \frac{1}{6} \cdot4(n-1) - \frac{1}{6} \cdot3 \\
		  &=& \frac{1}{6}, 
	\end{eqnarray*}
	which leads to a contradiction. 

	In case 2 (see Figure \ref{fig:monogon_t_2}), there are $|V| -n$ vertices in the interior of $D$ and therefore each of them has degree at least $6$. In $ \partial D$, $v_2, v_3,..., v_n$ have degree at least $4$, and $v_1$ has degree at least $2$. Therefore, again by \eqref{for:final_euler} we have
	\begin{eqnarray*}
		1 &=& |V| - \frac{1}{3} n - \frac{1}{6}\sum_{v\in V} \deg(v) \\
		  &=&|V| - \frac{1}{3} n  - \frac{1}{6}  \sum_{v\in D \backslash\partial D} \deg(v) - \frac{1}{6}\sum_{i=2}^n \deg(v_i) - \frac{1}{6}\deg(v_1) \\
		  & \le & |V| - \frac{1}{3} n  - \frac{1}{6}\cdot 6(|V| - n) - \frac{1}{6} \cdot4(n-1) - \frac{1}{6} \cdot2 \\
		  &=& \frac{1}{3},
	\end{eqnarray*}
	which also leads to a contradiction. Therefore, no curve in $\mathcal{D}( \widetilde{\mathcal{T}})$ bounds a monogon. 

	Suppose that a pair of curves in $\mathcal{D}( \widetilde{\mathcal{T}})$ bound a bigon. Let $D$ be the triangulated disk consisting of triangles inside the bigon. Again we use (\ref{for:final_euler}) and the lower bounds of degree of vertices in $D$ to get a contradiction. 
	According to the directions of the edges in $ \widetilde{\mathcal{T}}$ passing through the intersection points of the bigon, we classify $D$ into three types shown in Figure \ref{fig:bigon_t}. 

\begin{figure}[htbp]
	\centering
	\begin{subfigure}[htbp]{.45\textwidth}
	\begin{center}
		\includegraphics{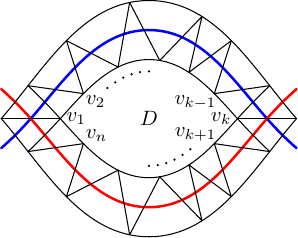}
	\end{center}
	\caption{Case 1}
	\label{fig:bigon_t_1}
	\end{subfigure}
	\begin{subfigure}[htbp]{.45\textwidth}
	\begin{center}
		\includegraphics{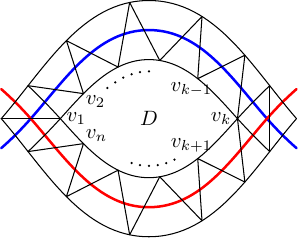}
	\end{center}
	\caption{Case 2}
	\label{fig:bigon_t_2}
	\end{subfigure}

	\begin{subfigure}[htbp]{.45\textwidth}
	\begin{center}
		\includegraphics{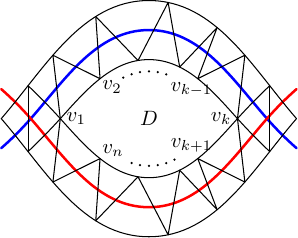}
	\end{center}
	\caption{Case 3}
	\label{fig:bigon_t_3}
	\end{subfigure}
	
	\caption{The bigon and the triangulated disk $D$}
	\label{fig:bigon_t}
\end{figure}	
In case 1 (see Figure \ref{fig:bigon_t_1}), there are $|V| -n$ vertices in the interior of $D$ and therefore each of them has degree at least $6$. In $ \partial D$, 
$v_1$ and $v_k$ have degree at least $3$, while other vertices in $\partial D$ have degree at least $4$. Therefore,
	\begin{eqnarray*}
		1 &=& |V| - \frac{1}{3} n - \frac{1}{6}\sum_{v\in V} \deg(v) \\
		  &=&|V| - \frac{1}{3} n  - \frac{1}{6}  \sum_{v\in D \backslash\partial D} \deg(v) - \frac{1}{6}\deg(v_1) - \frac{1}{6}\deg(v_k)- \frac{1}{6}\sum_{i\ne 1,k} \deg(v_i)  \\
		  & \le & |V| - \frac{1}{3} n  - \frac{1}{6}\cdot 6(|V| - n) - \frac{1}{6} \cdot3- \frac{1}{6} \cdot3- \frac{1}{6} \cdot4(n-2)  \\
		  &=& \frac{1}{3}, 
	\end{eqnarray*}
	which is a contradiction. 

	In case 2 (see Figure \ref{fig:bigon_t_2}), there are $|V| -n$ vertices in the interior of $D$ and therefore each of them has degree at least $6$. In $ \partial D$, 
$v_1$ has degree at least $3$ and $v_k$ has degree at least $2$, while other vertices in $\partial D$ have degree at least $4$. Therefore,
	\begin{eqnarray*}
		1 &=& |V| - \frac{1}{3} n - \frac{1}{6}\sum_{v\in V} \deg(v) \\
		  &=&|V| - \frac{1}{3} n  - \frac{1}{6}  \sum_{v\in D \backslash\partial D} \deg(v) - \frac{1}{6}\deg(v_1) - \frac{1}{6}\deg(v_k)- \frac{1}{6}\sum_{i\ne 1,k} \deg(v_i)  \\
		  & \le & |V| - \frac{1}{3} n  - \frac{1}{6}\cdot 6(|V| - n) - \frac{1}{6} \cdot3 - \frac{1}{6} \cdot2 - \frac{1}{6} \cdot4(n-2)  \\
		  &=& \frac{1}{2}, 
	\end{eqnarray*}
	which is a contradiction. 

	In case 3 (see Figure \ref{fig:bigon_t_3}), there are $|V| -n$ vertices in the interior of $D$ and therefore each of them has degree at least $6$. In $ \partial D$, 
$v_1$ and $v_k$ have degree at least $2$, while other vertices in $\partial D$ have degree at least $4$. Therefore,
	\begin{eqnarray*}
		1 &=& |V| - \frac{1}{3} n - \frac{1}{6}\sum_{v\in V} \deg(v) \\
		  &=&|V| - \frac{1}{3} n  - \frac{1}{6}  \sum_{v\in D \backslash\partial D} \deg(v) - \frac{1}{6}\deg(v_1) - \frac{1}{6}\deg(v_k)- \frac{1}{6}\sum_{i\ne 1,k} \deg(v_i)  \\
		  & \le & |V| - \frac{1}{3} n  - \frac{1}{6}\cdot 6(|V| - n) - \frac{1}{6} \cdot2- \frac{1}{6} \cdot2- \frac{1}{6} \cdot4(n-2)  \\
		  &=& \frac{2}{3}. 
	\end{eqnarray*}
	This is again a contradiction. Therefore, each pair of curves in $\mathcal{D}( \widetilde{\mathcal{T}})$ does not bound any bigon. 

	The proof is complete. 
\end{proof}
\begin{remark}
Here, the degree of triangulation $\mathcal{T}$ may be explained as the curvature of the surface and $\mathcal{D}(\mathcal{T})$ may be explained as geodesics in this curvature. An naive example is given by the triangulation of $\mathbb{R}^2$ by regular triangles whose degree is $6$ at every vertex and whose dual $4$-valent graph consists of geodesics in $\mathbb{R}^2$. Another example is about $\mathbb{H}^2$. In $\mathbb{H}^2$, regular triangulation always has degree $>6$, and its dual $4$-valent graph also consists of geodesics in $\mathbb{H}^2$. 
\end{remark}

Similarly to Lemma \ref{lem:mid_gph_incontractible}, Lemma \ref{lem:mid_gph_min_pos} and Proposition \ref{prop:fill_geod}, we also show the following results.

\begin{lemma}
	Each closed curve in $\mathcal{D}(\mathcal{T})$ is homotopically non-trival. 
	\label{lem:mid_gph_incontractible_tri}
\end{lemma}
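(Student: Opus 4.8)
The plan is to mirror the argument of Lemma~\ref{lem:mid_gph_incontractible}, with Proposition~\ref{prop:ess_tri_mid_gph_univ} playing the role that Proposition~\ref{prop:ess_mid_gph_univ} played there. First I would argue by contradiction: suppose some closed curve $\gamma \subset \mathcal{D}(\mathcal{T})$ is homotopically trivial in $X$. Since $[\gamma]=1$ in $\pi_1(X)$, the lifting criterion shows that any lift of $\gamma$ to the universal cover $\widetilde{X}$ is again a closed curve, and by construction it is a curve $\widetilde{\gamma}\subset \mathcal{D}(\widetilde{\mathcal{T}})$, because $\mathcal{D}(\widetilde{\mathcal{T}})$ is the full preimage of $\mathcal{D}(\mathcal{T})$.

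Next, since $\widetilde{X}$ is simply connected, $\widetilde{\gamma}$ is a contractible closed curve in $\widetilde{X}$. I would then split into the two cases used in Lemma~\ref{lem:mid_gph_incontractible}: if $\widetilde{\gamma}$ is simple, then by the Jordan curve theorem it bounds an embedded disk in $\widetilde{X}$; if $\widetilde{\gamma}$ has a self-intersection, then by Theorem~\ref{thm:contractible} it bounds either an embedded monogon or an embedded bigon with itself. In every case we produce a disk, an embedded monogon, or an embedded bigon whose boundary arcs lie in $\mathcal{D}(\widetilde{\mathcal{T}})$, contradicting Proposition~\ref{prop:ess_tri_mid_gph_univ}. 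Hence no such $\gamma$ exists, and every closed curve in $\mathcal{D}(\mathcal{T})$ is homotopically non-trivial.

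The argument is essentially immediate once Proposition~\ref{prop:ess_tri_mid_gph_univ} is in hand, so there is no real obstacle here; the only two points that deserve a word of care are, first, that the monogon and bigon furnished by Theorem~\ref{thm:contractible} are \emph{embedded}, so that they match the hypotheses of Proposition~\ref{prop:ess_tri_mid_gph_univ}, and second, that the ``bigon bounded by a pair of curves'' excluded in Proposition~\ref{prop:ess_tri_mid_gph_univ} must be read so as to include the degenerate case in which the two sides are subarcs of the single curve $\widetilde{\gamma}$. This last point is harmless, since the Euler-characteristic computation in the proof of Proposition~\ref{prop:ess_tri_mid_gph_univ} only uses the triangulated disk $D$ filling the bigon together with the degree lower bounds ($\ge 6$ at interior vertices of $D$, $\ge 4$ at non-corner boundary vertices, and $\ge 2$ or $\ge 3$ at the two corners), none of which is affected by whether the two sides of the bigon belong to one curve or to two.
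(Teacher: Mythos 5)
Your proof is correct and follows exactly the route the paper takes: the paper's proof simply notes that the argument of Lemma~\ref{lem:mid_gph_incontractible} goes through verbatim once Proposition~\ref{prop:ess_tri_mid_gph_univ} is substituted for Proposition~\ref{prop:ess_mid_gph_univ}. Your added remark that the bigon case must be read to allow both arcs to come from the same curve $\widetilde{\gamma}$, and that the Euler-characteristic computation in Proposition~\ref{prop:ess_tri_mid_gph_univ} is insensitive to that distinction, is a valid and careful observation that the paper leaves implicit.
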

\begin{proof}
By using Proposition \ref{prop:ess_tri_mid_gph_univ} instead of Proposition \ref{prop:ess_mid_gph_univ}, then the proof is exactly the same as that of Lemma \ref{lem:mid_gph_incontractible}.
\end{proof}

\begin{lemma}
The dual $4$-valent graph $\mathcal{D}(\mathcal{T})$ of $\mathcal{T}$ is in minimal position. 
	\label{lem:mid_gph_min_pos_tri}
\end{lemma}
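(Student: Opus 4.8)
The plan is to mirror the proof of Lemma \ref{lem:mid_gph_min_pos} essentially verbatim, substituting the triangulation analogues of the two inputs used there. First I would invoke Lemma \ref{lem:mid_gph_incontractible_tri} to guarantee that every closed curve in $\mathcal{D}(\mathcal{T})$ is homotopically non-trivial, which is exactly the hypothesis needed for the minimal-position criterion of Lemma \ref{lem:min_pos_crit} to apply to the family $\mathcal{D}(\mathcal{T})$.

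Next I would argue by contradiction. Suppose $\mathcal{D}(\mathcal{T})$ is not in minimal position. Then by Lemma \ref{lem:min_pos_crit} there is either a curve $\alpha \in \mathcal{D}(\mathcal{T})$ bounding an immersed monogon, or a pair $\alpha, \beta \in \mathcal{D}(\mathcal{T})$ bounding an immersed bigon. Passing to the universal cover $\widetilde{X}$ by the same homotopy-lifting argument as in Lemma \ref{lem:lift_embed}, one obtains that either some lift of $\alpha$ in $\mathcal{D}(\widetilde{\mathcal{T}})$ bounds an embedded monogon, or some lifts of $\alpha$ and $\beta$ bound an embedded monogon or an embedded bigon in $\widetilde{X}$. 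This contradicts Proposition \ref{prop:ess_tri_mid_gph_univ}, which states that in $\mathcal{D}(\widetilde{\mathcal{T}})$ no curve bounds a disk or an embedded monogon and no pair of curves bounds an embedded bigon. Hence $\mathcal{D}(\mathcal{T})$ is in minimal position.

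The only point requiring a moment of care — the \emph{main obstacle}, such as it is — is verifying that Lemma \ref{lem:lift_embed} transfers to $\mathcal{D}(\mathcal{T})$, even though $\mathcal{T}$ need not belong to the family $\mathcal{G}(C)$. Inspecting the proof of Lemma \ref{lem:lift_embed}, it only uses that $\mathcal{D}(\mathcal{T})$ is a finite union of closed curves on $X$ together with the elementary observation that the first self-intersection of a lift cuts out an embedded monogon (resp.\ that two innermost intersection points of two simple lifts cut out an embedded bigon); none of this relies on the bounds on the number of vertices and edges. So the lemma carries over with no change of argument, and the remainder is a formal repetition of the proof of Lemma \ref{lem:mid_gph_min_pos}.
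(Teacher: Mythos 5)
Your proposal matches the paper's own proof, which simply replaces Lemma \ref{lem:mid_gph_incontractible} and Proposition \ref{prop:ess_mid_gph_univ} by their triangulation analogues and then repeats the argument of Lemma \ref{lem:mid_gph_min_pos} verbatim. Your added observation that Lemma \ref{lem:lift_embed} carries over without the $\mathcal{G}(C)$ hypothesis is a correct and worthwhile sanity check that the paper leaves implicit.
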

\begin{proof}
	By Lemma \ref{lem:mid_gph_incontractible_tri}, each closed curve in $\mathcal{D}(\mathcal{T})$ is homotopically non-trival. By using Proposition \ref{prop:ess_tri_mid_gph_univ} instead of Proposition \ref{prop:ess_mid_gph_univ}, then the proof is exactly the same as that of Lemma \ref{lem:mid_gph_min_pos}.
\end{proof}

\begin{proposition}
The dual $4$-valent graph $\mathcal{D}(\mathcal{T})$ is isotopic to a filling set of closed geodesics on $X$. 
	\label{prop:fill_geod_tri}
\end{proposition}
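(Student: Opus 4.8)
The plan is to follow the same three-step scheme used for Proposition~\ref{prop:fill_geod}, replacing $G_0$ by the triangulation $\mathcal{T}$ and citing the triangulation analogues proved just above. First I would record that $\mathcal{D}(\mathcal{T})$ cuts $X$ into topological disks: by construction of the dual $4$-valent graph, each complementary region of $\mathcal{D}(\mathcal{T})$ is either a small geodesic triangle sitting inside a face of $\mathcal{T}$ or a polygonal region surrounding a single vertex $v$ of $\mathcal{T}$ (with one corner contributed by each triangle of $\mathcal{T}$ incident to $v$); in both cases the region is an embedded disk. Hence $\mathcal{D}(\mathcal{T})$ is a filling $4$-valent graph on $X$.

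Next I would invoke the structural results already in hand. By Lemma~\ref{lem:mid_gph_incontractible_tri} every closed curve in $\mathcal{D}(\mathcal{T})$ is homotopically non-trivial, and by Lemma~\ref{lem:mid_gph_min_pos_tri} the family $\mathcal{D}(\mathcal{T})$ is in minimal position. These are exactly the hypotheses of Theorem~\ref{thm:isotopy_geod}, so there is an isotopy of $X$ carrying $\mathcal{D}(\mathcal{T})$ to a set of closed geodesics, up to finitely many triangle moves. It remains to check that neither the isotopy nor the triangle moves destroys the property that the complement is a union of disks.

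For the isotopy this is automatic, since an ambient isotopy preserves the topological type of every complementary region. For a triangle move, I would argue exactly as in the proof of Proposition~\ref{prop:fill_geod}: a triangle move amounts to cutting a disk from one complementary region along an arc in its boundary and regluing it to an adjacent complementary region along an arc of that region's boundary (see Figure~\ref{fig:triangle_move_3}). Cutting a disk along a boundary arc still yields a disk, and gluing a disk to another disk along a boundary arc of each again yields a disk; therefore every complementary region remains a disk after each triangle move. Consequently the geodesic representative obtained at the end of the process still fills $X$, which is the assertion of the proposition.

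I do not expect a genuine obstacle here; the only point requiring care is the verification that $\mathcal{D}(\mathcal{T})$ is filling to begin with, i.e.\ the combinatorial description of its complementary regions in the triangulation setting, and the observation — identical to the earlier case — that a triangle move preserves the disk-complement property. Everything else is a direct citation of Theorem~\ref{thm:isotopy_geod} together with Lemmas~\ref{lem:mid_gph_incontractible_tri} and~\ref{lem:mid_gph_min_pos_tri}.
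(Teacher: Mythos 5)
Your proposal is correct and follows exactly the paper's route: the paper simply remarks that the proof is the same as for Proposition~\ref{prop:fill_geod}, namely invoking Theorem~\ref{thm:isotopy_geod} via Lemmas~\ref{lem:mid_gph_incontractible_tri} and~\ref{lem:mid_gph_min_pos_tri} and then checking that triangle moves preserve the disk-complement property. You additionally spell out the combinatorial reason why $\mathcal{D}(\mathcal{T})$ is filling to begin with (small triangles inside faces and polygonal regions around vertices), a point the paper leaves implicit; that is a harmless and accurate elaboration, not a divergence.
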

\begin{proof}
The proof is exactly the same as that of Proposition \ref{prop:fill_geod}.
\end{proof}

Now we are ready to finish the proof of Proposition \ref{prop:ess_tri_mid_gph}.
\begin{proof}[Proof of Proposition \ref{prop:ess_tri_mid_gph}]
The result clearly follows from Lemma \ref{lem:mid_gph_incontractible_tri}, Lemma \ref{lem:mid_gph_min_pos_tri} and Proposition \ref{prop:fill_geod_tri}.
\end{proof}

\section{Proof of Theorem \ref{mt-1}}\label{sec-t-1}
In this section, we finish the proof of Theorem \ref{mt-1}. 

Let us first prepare the following elementary inequality that will be applied later.
	\begin{lemma}
		Let 
		\[
		f(x) = \arcsinh \left( \frac{1}{\sinh \left( \frac{x}{2} \right) } \right) - \frac{1}{2}\log 2.
		\]
		Then 
		\[
			\log\left(\frac{1}{x}\right) \le f(x) \le 
		\begin{cases}
			2, &\text{ if } x > \frac{1}{2}  ;\\
			3\log\left(\frac{1}{x}\right)&\text{ if }0 < x \le \frac{1}{2}.
		\end{cases}
		\]
		\label{lem:dist}
	\end{lemma}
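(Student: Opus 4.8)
The plan is to first put $f$ into a clean closed form, after which both inequalities follow from monotonicity of $f$ together with a few elementary hyperbolic estimates. For the closed form I would expand $\arcsinh y = \log\bigl(y+\sqrt{1+y^2}\bigr)$ at $y = 1/\sinh(x/2)$, use $\sqrt{1+y^2} = \cosh(x/2)/\sinh(x/2)$ (valid for $x>0$) to rewrite $\arcsinh\bigl(1/\sinh(x/2)\bigr) = \log\frac{1+\cosh(x/2)}{\sinh(x/2)}$, and then apply the half-angle identities $1+\cosh u = 2\cosh^2(u/2)$ and $\sinh u = 2\sinh(u/2)\cosh(u/2)$ with $u=x/2$. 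This collapses everything to
\[ f(x) = \log\coth\Bigl(\frac{x}{4}\Bigr) - \frac{1}{2}\log 2 \qquad (x>0). \]
Since $t\mapsto\coth t$ is strictly decreasing on $(0,\infty)$, the function $f$ is strictly decreasing; this is what will control the two upper bounds.

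For the lower bound, from $\tanh t < t$ $(t>0)$ I get $\coth t > 1/t$, hence $\coth(x/4) > 4/x$ and therefore $f(x) > \log(4/x) - \tfrac12\log 2 = \log(2\sqrt2/x) > \log(1/x)$ for every $x>0$, with room to spare. For the upper bound on $(\tfrac12,\infty)$, monotonicity reduces the problem to checking $f(\tfrac12) \le 2$, i.e. $\log\coth(\tfrac18) \le 2 + \tfrac12\log 2$; using $\coth(\tfrac18) = 1 + \tfrac{2}{e^{1/4}-1}$ and the crude bound $e^{1/4} > \tfrac54$ one gets $\coth(\tfrac18) < 9 < e^{2}\sqrt2$ (the last step because $e^2 > 7 > 9/\sqrt2$), which is exactly $\log\coth(\tfrac18) < \log(e^2\sqrt2) = 2 + \tfrac12\log 2$.

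For the upper bound on $(0,\tfrac12]$, the closed form turns the claim $f(x)\le 3\log(1/x)$ into the equivalent inequality $h(x) := x^3\coth(x/4)\le\sqrt2$. I would show $h$ is increasing on $(0,\infty)$ by the computation $h'(x) = \frac{x^2}{4\sinh^2(x/4)}\bigl(6\sinh(x/2)-x\bigr)$, which is positive since $\sinh(x/2)>x/2$, and then conclude $h(x)\le h(\tfrac12) = \tfrac18\coth(\tfrac18) < \tfrac98 < \sqrt2$.

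I do not expect a real obstacle: the only nontrivial move is spotting the closed form, after which the lemma is essentially a monotonicity statement, and the only thing requiring care is making the numerics at $x=\tfrac12$ rigorous — here the bound $\coth(1/8)<9$ coming from $e^{1/4}>5/4$ is comfortably strong enough for both parts.
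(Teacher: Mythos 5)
Your proof is correct, and it takes a genuinely different route from the paper's. The paper never introduces a closed form for $f$; instead it computes $f'(x) = -\tfrac{1}{2\sinh(x/2)}$ directly from the $\arcsinh$ expression, compares this derivative with the derivatives of $\log(1/x)$ and $3\log(1/x)$ to show that $f-\log(1/x)$ is increasing and $3\log(1/x)-f$ is decreasing, anchors the upper bounds on $(0,\tfrac12]$ and $(\tfrac12,\infty)$ by the numerical estimate $f(\tfrac12)\approx 1.73 < 2 < 3\log 2 \approx 2.08$, and anchors the lower bound by computing $\lim_{x\to 0^+}\bigl(f(x)-\log(1/x)\bigr) = \tfrac{3}{2}\log 2 > 0$. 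Your reduction to $f(x) = \log\coth(x/4) - \tfrac12\log 2$ buys you two things the paper's argument lacks: the lower bound falls out immediately from $\coth t > 1/t$, with no limit computation, and the endpoint checks at $x=\tfrac12$ are made fully rigorous by the single rational estimate $e^{1/4} > 5/4$ rather than resting on decimal approximations. The change of variable to $h(x) = x^3\coth(x/4)$ for the second upper bound also works cleanly; I verified the derivative $h'(x) = \frac{x^2}{4\sinh^2(x/4)}\bigl(6\sinh(x/2)-x\bigr) > 0$ and the inequality $\tfrac18\coth(\tfrac18) < \tfrac98 < \sqrt2$. Both proofs are sound; yours is somewhat more self-contained since every numeric is replaced by an exact bound.
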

	\begin{proof}
		Recall that $(\arcsinh(x))' = \frac{1}{\sqrt{1+x^2}}$. So  
		\[
			f'(x) = - \frac{1}{2\sinh \frac{x}{2}} <0. 
		\]
Since 
\[f\left( \frac{1}{2} \right) \approx 1.73<2<3\log 2\approx 2.08,\]
the upper bound follows from the following fact $$\left(3\log\left(\frac{1}{x}\right)\right)'-f'(x)=-\frac{3}{x}+ \frac{1}{2\sinh \frac{x}{2}}<0.$$ 
For the lower bound, note that 
		\[
			\left(\log\left(\frac{1}{x}\right)\right)' = - \frac{1}{x} <  - \frac{1}{2\sinh \frac{x}{2}} =f'(x). 
		\]
		The conclusion then follows because
        \[\lim\limits_{x\to 0^+}\left(f(x)- \log\left(\frac{1}{x}\right)\right)=\frac{3\log 2}{2}>0.\] 
        The proof is complete.
	\end{proof}

For any $X\in \sM_g$, set
\begin{equation}
    \mathrm{R}(X) = \sum_{\gamma \in \mathcal{P}(X), \ \ell(\gamma) <1} \log \left(\frac{1}{\ell(\gamma)}\right)
\end{equation}
where $\mathcal{P}(X)$ is the set of closed geodesics in $X$.

We first show the lower bound of Theorem \ref{mt-1}.
\begin{proposition}\label{mt-lb}
Let $X\in \sM_g$ be a closed hyperbolic surface. Then
\[\slfs(X)>\pi(g-1)+\mathrm{R}(X).\]
\end{proposition}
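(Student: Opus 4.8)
For the final statement (Proposition \ref{mt-lb}), here is the plan.

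\medskip

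The plan is to prove the slightly stronger statement that \emph{every} filling closed multi-geodesic $\Gamma\subset X$ has $\ell(\Gamma) > \pi(g-1)+\mathrm{R}(X)$; since $X$ carries only finitely many closed geodesics of any bounded length, $\slfs(X)$ is attained by some such $\Gamma$, and the proposition follows. As suggested in the introduction, the proof rests on two independent ingredients — the isoperimetric inequality in $\mathbb{H}^2$ and the Collar Lemma — each of which contributes a separate lower bound for $\ell(\Gamma)$, namely $\ell(\Gamma) > 2\pi(g-1)$ and $\ell(\Gamma)\ge 2\mathrm{R}(X)$; adding these gives $2\ell(\Gamma) > 2\pi(g-1)+2\mathrm{R}(X)$, which is exactly what we want.

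For the first bound, note that since $\Gamma$ fills, $X\setminus\Gamma$ is a finite disjoint union of open topological disks $D_1,\dots,D_k$, and each $D_i$, being simply connected, lifts isometrically to a bounded region in $\mathbb{H}^2$ with piecewise-geodesic boundary. As $\Gamma$ is a union of closed geodesics, all its self-intersections are transverse and each edge of $\Gamma$ borders a complementary region on each side, so $\sum_{i=1}^k\perim(D_i)=2\ell(\Gamma)$. Applying $\perim(D_i)>\area(D_i)$ — the hyperbolic isoperimetric inequality, already used in the proof of Lemma~\ref{lem:immerse_converge} — to each disk and summing yields $2\ell(\Gamma)>\area(X)=4\pi(g-1)$, hence $\ell(\Gamma)>2\pi(g-1)$.

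For the second bound, let $\gamma_1,\dots,\gamma_m$ be the closed geodesics of $X$ of length $<1$. Since $1<\log 4$, these are simple and pairwise disjoint, so by the Collar Lemma their standard collars $C_j$, of half-width $w_j=\arcsinh\!\big(1/\sinh(\tfrac12\ell(\gamma_j))\big)$, are embedded and pairwise disjoint. Because $\Gamma$ fills $X$, it must contain an arc that crosses $C_j$ from one boundary circle to the other: otherwise some component of $X\setminus\Gamma$ would contain a loop freely homotopic to $\gamma_j$, contradicting that the complementary regions are disks. Such a crossing arc meets $\gamma_j$ (transversally, and since distinct geodesics of $\mathbb{H}^2$ cross at most once, exactly once) and runs from one boundary circle of $C_j$ to the other, so its length is at least $2w_j$. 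As the $C_j$ are disjoint,
\[
	\ell(\Gamma)\ \ge\ \sum_{j=1}^m\ell(\Gamma\cap C_j)\ \ge\ \sum_{j=1}^m 2w_j\ \ge\ \sum_{j=1}^m 2\log\frac{1}{\ell(\gamma_j)}\ =\ 2\mathrm{R}(X),
\]
the last step being the bound $w_j=f(\ell(\gamma_j))+\tfrac12\log 2\ge f(\ell(\gamma_j))\ge\log(1/\ell(\gamma_j))$ from Lemma~\ref{lem:dist}. Combining the two bounds completes the proof.

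The identity $\sum_i\perim(D_i)=2\ell(\Gamma)$ and the elementary estimate for $w_j$ are routine. The one step that genuinely needs care is the claim in the second bound that a filling multi-geodesic crosses each collar with a full boundary-to-boundary arc, rather than merely meeting its core: this is precisely where the disk structure of $X\setminus\Gamma$ is used (it is not enough to know that $\Gamma$ meets every essential simple closed curve), and it also has to accommodate the borderline possibility that some $\gamma_j$ is itself a component of $\Gamma$, in which case one argues that the disk condition forces a \emph{different} component of $\Gamma$ to cross $C_j$. With that input in hand, the rest is a direct application of the results recalled above.
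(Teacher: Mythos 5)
Your proof is correct and follows the same strategy as the paper's: the isoperimetric inequality applied to the complementary disks gives $\ell(\Gamma) > 2\pi(g-1)$, the Collar Lemma together with Lemma~\ref{lem:dist} gives $\ell(\Gamma) \ge 2\mathrm{R}(X)$, and averaging the two yields the claim. The paper states the collar step more tersely (``it must transversely intersect with every simple closed geodesic of length $<1$''), whereas you spell out why the disk-complement condition forces a boundary-to-boundary crossing of each collar, including the edge case $\gamma_j\subset\Gamma$; both treatments are sound.
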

\begin{proof}
	For any filling closed multi-geodesic, it must transversely intersect with every simple closed geodesic of length $<1$ if it exists. Then it follows from the Collar Lemma (see \emph{e.g.} \cite[Theorem 4.4.6]{buser2010geometry}) and Lemma \ref{lem:dist} that
\begin{equation}\label{l-i-1}
   \slfs(X)>2\sum_{\gamma \in \mathcal{P}(X), \ \ell(\gamma) <1}\arcsinh \left( \frac{1}{\sinh \left( \frac{1}{2}\ell(\gamma) \right) } \right)>2\mathrm{R}(X).
\end{equation}
Meanwhile, for any $G\in \mathcal{G}$, $\ell(G) > \frac{1}{2}\area(X) = 2\pi(g-1)>0$ since $G$ is filling and the perimeter of a bounded hyperbolic contractible domain is larger than the area of this domain (see \emph{e.g.} \cite[Section 8.1 Page 211]{buser2010geometry}). Thus, 
    \begin{equation}\label{lb-ml}
       \inf\limits_{G\in \mathcal{G}} \ell(G) \geq 2\pi(g-1)> 0.   
    \end{equation}
 Therefore, 
	\begin{equation}\label{for:length_lower_g-1}
		\slfs(X) \geq 2\pi(g-1).
	\end{equation}
     The conclusion then follows from taking an average of \eqref{l-i-1} and \eqref{for:length_lower_g-1}.
\end{proof}
\begin{remark}
If we use \cite[Theorem 1.1]{gao2025shortest} instead of \eqref{lb-ml}, we may conclude that for any $X\in \sM_g$ and large enough $g$,
\[\slfs(X)>\frac{7}{2}g+\mathrm{R}(X).\]
\end{remark}

Next, we prove the upper bound in Theorem \ref{mt-1} using the results in the last section. We first show that 
\begin{proposition}\label{prop:length_upper}
	For a given hyperbolic surface $X\in \mathcal{M}_g$, there is a filling graph $G\in  \mathcal{G}$ on $X$ such that 
	\[
		\ell(G) \le 150g+6\mathrm{R}(X).
	\]
\end{proposition}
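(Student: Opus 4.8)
The plan is to build $G$ by starting from Buser's trigon decomposition (Theorem~\ref{thm:buser_trigon}) and adding the short simple closed geodesics, then estimating the total length. By Theorem~\ref{thm:buser_trigon}, $X$ admits a trigon decomposition whose trigons have all sides of length $\le \log 4$ and area between $0.19$ and $1.36$; moreover every simple closed geodesic $\gamma$ with $\ell(\gamma)\le\log 4$ appears as the closed-geodesic side of an annulus-shaped trigon, paired up into a collar around $\gamma$. Since $\area(X)=2\pi(g-1)$ and each trigon has area $\ge 0.19$, the number of trigons is at most $\frac{2\pi(g-1)}{0.19}\le 34g$. The $1$-skeleton of this decomposition (the union of all trigon sides that are geodesic \emph{arcs}, i.e.\ the parts not coming from closed-geodesic sides) is a graph whose complement consists of the trigon interiors plus the annular regions cut off by the closed-geodesic sides; this is \emph{not} filling because the annular regions are not disks, so one must add the closed geodesics themselves to make it filling.

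**Key steps.** First, I would carefully assemble the graph: take the geodesic-arc sides of all trigons, together with, for each short simple closed geodesic $\gamma$ (say with $\ell(\gamma)<1$, which is the range relevant to $\mathrm R(X)$; the ones with $1\le\ell(\gamma)\le\log 4$ contribute a bounded amount each and there are $O(g)$ of them), the geodesic $\gamma$ itself. One then checks that the resulting set is an embedded filling graph after perhaps subdividing to make it a genuine simplicial graph, and bounds $|V_{\mathrm{tri}}|$ and $|E_{\mathrm{tri}}|$ linearly in $g$ — here the $\le 34g$ bound on the number of trigons, times the bounded number of sides/vertices per trigon, gives the constant (one needs $C\ge 491$, which comes from this counting). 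Second, I would bound $\ell(G)$. The geodesic-arc sides contribute at most (number of such sides)$\times\log 4\le O(g)$, giving the $150g$-type term. The closed geodesics with $1\le\ell(\gamma)\le\log 4$ contribute $\le\log 4$ each, again $O(g)$. The genuinely unbounded contribution is $\sum_{\ell(\gamma)<1}\ell(\gamma)$, but since $\ell(\gamma)<1$ this is bounded by (number of such $\gamma$)$\,\le 3g-3$, so it is also absorbed into the $O(g)$ term — so in fact the $6\mathrm R(X)$ term is \emph{not} needed for this particular graph, and the proposition as stated (with the $+6\mathrm R(X)$) holds trivially once one has the $O(g)$ bound. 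Wait — re-examining: the point of including $6\mathrm R(X)$ must be that one does not add the full closed geodesic but rather needs extra arcs whose length involves the collar width $\arcsinh(1/\sinh(\ell(\gamma)/2))$, which by Lemma~\ref{lem:dist} is $\le 3\log(1/\ell(\gamma))$; summing gives $\le 3\mathrm R(X)$ per side of the collar, hence $\le 6\mathrm R(X)$ total. This is where \eqref{rmk:trigonlength} and Lemma~\ref{lem:dist} enter: the annulus-shaped trigon has its arc-sides of length $\le\log 4$ but the \emph{vertices} sit at distance $d(A,\gamma)=\arcsinh(1/\sinh(\ell(\gamma)/2))-\frac12\log 2$ from $\gamma$, and connecting the graph across the collar to $\gamma$ costs this distance, producing exactly the $\mathrm R(X)$-dependence.

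**Main obstacle.** The hard part will be bookkeeping: precisely specifying how the trigon $1$-skeleton plus the short geodesics is turned into an embedded filling \emph{trivalent-bounded} graph $G$ in the family $\mathcal G(C)$ — in particular verifying that $X\setminus G$ consists of disks (the annular collars must be cut by arcs joining $\gamma$ to the rest of the skeleton, and these arcs are what cost $\le 3\log(1/\ell(\gamma))$ by \eqref{rmk:trigonlength} and Lemma~\ref{lem:dist}), and then chasing through the constants to confirm $150g+6\mathrm R(X)$ and $C\ge 491$ suffice. The geometric content is light; the care is entirely in the combinatorial construction and the constant-tracking, so I would organize the proof around an explicit description of $G$ trigon-by-trigon, then a clean count of edges/vertices, then a length estimate split into the bounded-arc part ($\le 150 g$) and the collar-crossing part ($\le 6\mathrm R(X)$ via Lemma~\ref{lem:dist}).
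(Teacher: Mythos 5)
Your proposal, read to its end, does land on the paper's construction — trigon decomposition plus short arcs across the collars, with the collar‐crossing arcs bounded by Lemma~\ref{lem:dist} — but the path there contains several genuine errors that need to be corrected before the proof works.

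First, the idea that precedes your ``Wait — re-examining'' pivot is not a minor detour; it is false, and the conclusion you draw from it (``so the $6\mathrm{R}(X)$ term is not needed'') would, if true, collapse the statement to a triviality. In Buser's decomposition, each closed geodesic $\gamma$ with $\ell(\gamma)\le\log 4$ is already a \emph{side} of the two annulus-shaped trigons that form its collar, so ``adding $\gamma$ to the skeleton'' adds nothing. More to the point, the complement of the full $1$-skeleton $\mathcal{T}$ contains the interiors of the annulus-shaped trigons, which are annuli, not disks; $\mathcal{T}$ with or without $\gamma$ is simply never filling. The only way to make it filling is to cut each annular piece by an arc crossing the collar, and the cost of those arcs is precisely where $\mathrm{R}(X)$ enters — it cannot be absorbed into the $O(g)$ term.

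Second, your self-correction is right in spirit but off in one detail: you do \emph{not} drop $\gamma$ from the graph. The paper's $G$ is $\mathcal{T}$ together with one geodesic arc $\alpha_i$ per annulus-shaped trigon $T_i$, running from a vertex of $T_i$ to the closed geodesic side $\gamma$ and realizing $d(A,\gamma)=\arcsinh(1/\sinh(\ell(\gamma)/2))-\tfrac12\log 2$. Cutting $T_i$ along $\alpha_i$ turns the trigon into a pentagon, so $X\setminus G$ is all disks. Since there are two annulus-shaped trigons per short $\gamma$ (and $n\le 3g-3$ short $\gamma$'s), the arcs contribute $\sum_{i=1}^{2n}\ell(\alpha_i)\le 2\sum_{\gamma}\max\bigl(3\log(1/\ell(\gamma)),\,2\bigr)$, which splits as $\le 6\mathrm{R}(X)+O(g)$ once you separate the $\ell(\gamma)<1$ and $1\le\ell(\gamma)\le\log 4$ cases via Lemma~\ref{lem:dist}.

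Third, a numerical slip that matters for the explicit constants: $\area(X)=4\pi(g-1)$ by Gauss--Bonnet, not $2\pi(g-1)$. The trigon count is therefore $\le 4\pi(g-1)/0.19\le 66.14g$, not $\le 34g$, and the $1$-skeleton length bound is $\le\tfrac{3\log 4}{2}\cdot 66.14g\le 138g$ (the $\tfrac12$ because each arc side is shared by two trigons). This is where the constants $150$ and $491$ come from, so halving the trigon count would give the wrong bookkeeping.
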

\begin{proof}
	By Theorem \ref{thm:buser_trigon} we know that $X$ admits a trigon decomposition $\mathcal{T}$, in which each trigon satisfies specific bounds on areas and perimeters. We first estimate the number of trigons and the length of $\mathcal{T}$. Recall that $\area(X)=4\pi(g-1)$ and that each trigon in $\mathcal{T}$ has area at least $0.19$. Then the number of trigons in $\mathcal{T}$ is at most $$ \frac{4\pi}{0.19}(g-1) \le 66.14 g.$$ 
	For each trigon in $\mathcal{T}$, since every side has length at most $\log 4$, its perimeter is $ \le 3\log 4$. Thus, if we treat $\mathcal{T}$ as a graph in $X$, then
	\begin{equation}\label{fg-t-0}
	    \ell(\mathcal{T}) \le \frac{3 \log 4}{2} \times 66.14 g \le 138 g. 
	\end{equation}
For any annulus-shaped trigon in $\mathcal{T}$ as shown in Figure \ref{fig:trigon}, recall that from \eqref{rmk:trigonlength} we know that the distance between the simple closed geodesic side $\gamma$ and the vertex $A$ or $B$ is 
	\[
	 	d(A, \gamma) = d(B, \gamma) = \arcsinh \left( \frac{1}{\sinh \left( \frac{1}{2}\ell(\gamma) \right) } \right) - \frac{1}{2}\log 2.
	\]
	Therefore,  by Lemma \ref{lem:dist} we have
	\[
		d(A, \gamma) = d(B, \gamma) = f(\ell(\gamma))\le \max(-3\log(\ell(\gamma)), 2). 
	\]

\begin{figure}[htbp]
	\centering
	\begin{subfigure}[htbp]{.45\textwidth}
	\begin{center}
		\includegraphics{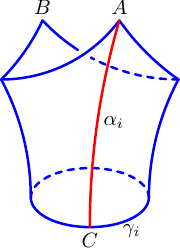}
	\end{center}
	\caption{Trigon $T_i$}
	\label{fig:trigon_2}
	\end{subfigure}
	\begin{subfigure}[htbp]{.45\textwidth}
	\begin{center}
		\includegraphics{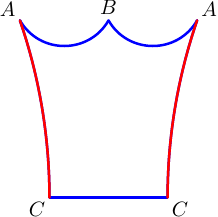}
	\end{center}
	\caption{Pentagon $T_i \backslash \alpha_i$}
	\label{fig:trigon_3}
	\end{subfigure}
	
	\caption{The annulus-shaped trigon $T_i$ and the pentagon $T_i \backslash \alpha_i$}
	\label{fig:trigon_cut}
\end{figure}	

\noindent Let $\left\{ \gamma_1, \gamma_2, ..., \gamma_n \right\} $ be the set of simple closed geodesics in $X$ with length $ \le \log 4$. Among them, assume that $ \gamma_1, \gamma_2, ..., \gamma_k $ have length $ < 1$; $ \gamma_{k+1}, \gamma_2, ..., \gamma_n  $ have length between $1$ and $\log 4$. Let $T_1, ..., T_{2n}$ be the annulus-shaped trigons in $X$, $T_{2i-1}$ and $T_{2i}$ form a neighborhood of $\gamma_i$. Let $\alpha_i$ be one of the two geodesic segments realizing the distance between the simple closed geodesic side of $T_i$ and one of the two vertices of $T_i$, for $i=1,2, ..., 2n$ (see Figure \ref{fig:trigon_cut} for an illustration). Then we define the graph $G$ as the union of $\mathcal{T}$ and the set of geodesic arcs $\left\{ \alpha_1, \alpha_2, ..., \alpha_{2n} \right\} $. It is clear that $G$ is a filling graph since every annulus-shaped trigon $T_i$ is cut by the corresponding geodesic segment $\alpha_i$ into a topological disk. Now we estimate the length of $G$. Write
\[\ell(G)= \ell(\mathcal{T}) + \sum_{i=1}^{2n} \ell(\alpha_i)=\ell(\mathcal{T}) + \sum_{i=1}^{2k} \ell(\alpha_i)+ \sum_{i=2k+1}^{2n} \ell(\alpha_i).\]
For $i > 2k$, $\ell(\alpha_i) \le f(1) \le  1.07$. So we have
\begin{equation}\label{fg-t-1}
\begin{aligned}
   \sum_{i=2k+1}^{2n} \ell(\alpha_i)\leq  2\times 1.07(n-k) =2.14(n-k).
\end{aligned}
\end{equation}
For $i \le 2k$, by Lemma \ref{lem:dist}, $\ell(\alpha_i) \le \max\left\{-3\log\left(\ell\left(\gamma_{\lceil \frac{i}{2} \rceil}\right)\right), 2\right\}.$ So we have 
\begin{equation}\label{fg-t-2}
\begin{aligned}
   \sum_{i=1}^{2k} \ell(\alpha_i)\leq &  2 \sum_{i=1}^{k} \max\left\{3\log\left(\frac{1}{\ell(\gamma_i)}\right), 2\right\}\leq 2 \sum_{i=1}^{k} \left(3\log\left(\frac{1}{\ell(\gamma_i)}\right)+2\right)\\
   &=6\times \left(\sum_{i=1}^{k} \log\left(\frac{1}{\ell(\gamma_i)}\right)\right)+4k.
\end{aligned}
\end{equation}
Using $k\leq n \leq (3g-3)$, it then follows from \eqref{fg-t-0}, \eqref{fg-t-1} and \eqref{fg-t-2} that
\[\ell(G)\le 138g+6\times \left(\sum_{i=1}^{k} \log\left(\frac{1}{\ell(\gamma_i)}\right)\right)+12(g-1)\le 150g+6\mathrm{R}(X).\]
The proof is complete.
\end{proof}

Now we are ready to prove Theorem \ref{mt-1}.
\begin{proof}[Proof of Theorem \ref{mt-1}]
From Proposition \ref{mt-lb} it suffices to show the upper bound. By Proposition \ref{prop:length_upper}, there is a filling graph $G \in \mathcal{G}$ such that $$\ell(G) \le 150g+6\mathrm{R}(X).$$ Then we choose a minimal $G_0 \in \mathcal{G}$ with $\ell(G_0) = \min\limits_{G\in \mathcal{G}}\ell(G)$ and $\mathcal{D}(G_0)$ to be the dual $4$-valent graph of $G_0$. By Proposition \ref{prop:ess_mid_gph}, $\mathcal{D}(G_0)$ is a filling closed multi-curve. By Proposition \ref{prop:graph_estimate}, we obtain 
	\[
		\slfs(X)\leq\ell(\mathcal{D}(G_0)) \le 2 \ell(G_0) \le 300g+12\mathrm{R}(X)
	\]
    as desired. The proof is complete.
\end{proof}

\section{Behavior of $\slfs$ on random hyperbolic surfaces}\label{s-rs}
In this section, we study the asymptotic behavior of $\slfs$ for random hyperbolic surfaces. In the first subsection, we prove Theorems \ref{thm:wp_prob} and \ref{thm:wp_exp}, using Theorem \ref{mt-1}; in the second subsection, we prove Theorem \ref{mt-3}, using Proposition \ref{prop:ess_tri_mid_gph_univ}.

\subsection{Weil-Petersson random surfaces} Let $\mathcal{M}_g$ be the moduli space of hyperbolic surfaces of genus $g$. For $t\in \mathbb{R}^n_{ > 0}$, let $\mathcal{M}_{g,n}(t)$ be the moduli space of hyperbolic surfaces of genus $g$ with $n$ geodesic boundaries of length $t$. Let $\mathcal{M}_{g,n}$ or $\mathcal{M}_{g,n}(0)$ be the moduli space of hyperbolic surfaces of genus $g$ with $n$ cusps. The volumes of these three moduli spaces in Weil-Petersson metric are denoted as $V_g$, $V_{g,n}(t)$ and $V_{g,n}$ respectively. 
For a Borel subset $B\subset \mathcal{M}_g$, we write 
\[
	\prob_{\mathrm{WP}}^g(B) = \frac{\vol(B)}{V_g}. 
\]
For $f: \mathcal{M}_g \to \mathbb{R}$, we also write
\[
	\mathbb{E}_{\mathrm{WP}}^{g}[f] = \frac{1}{V_g} \int_{\mathcal{M}_g} f \mathrm{d}\vol. 
\]

\noindent In this subsection, we use the same notation as in \cite{mirzakhani2013growth, WX22-GAFA, nie2023large}. We also use the following notation: for two positive functions $f_1$, $f_2$ of the variable $g$, say $$f_1(g) = O(f_2(g))$$ if there is a constant $C > 0$, independent of $g$, such that $f_1(g) \le C f_2(g)$. 
For two positive functions $f_{1,p}$ and $f_{2,p}$ about the parameter $p$ and the variable $g$, we say $$f_{1,p}(g) = O_p(f_{2,p}(g))$$ if there is a constant $C(p) > 0$, only depending on $p$, such that $f_{1,p}(g) \le C(p) f_{2,p}(g)$. We say $$f_{1,p}\asymp_p f_{2,p}$$ if $f_{1,p}(g) = O_p(f_{2,p}(g))$ and $f_{2,p}(g) = O_p(f_{1,p}(g))$. 

Here is an elementary inequality that will be applied later. 
For $a_1, a_2, ..., a_n >0$ and $p>0$, 
\begin{equation}
	\left( \frac{a_1+a_2+...+a_n}{n} \right)^p \le \left( \max_{1 \le i \le n} a_i \right)^p \le a_1^p+a_2^p + ...+ a_n^p.	
	\label{for:sum_an}
\end{equation}

The first lemma is about the expectation of $\mathrm{R}(X) = \sum\limits_{\gamma \in \mathcal{P}(X), \ \ell(\gamma) <1} \log \left(\frac{1}{\ell(\gamma)}\right)$. 
 \begin{lemma}
	For $p>0$, 	
	\[
	\frac{\mathbb{E}_{\mathrm{WP}}^{g}[\mathrm{R}(X)^p]}{g^p} =  O_p(1). 
	\]
    If $p=1$,	\[
	\mathbb{E}_{\mathrm{WP}}^{g}[\mathrm{R}(X)] = O(1). 
	\]
	\label{lem:r_exp}
\end{lemma}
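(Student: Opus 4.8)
The plan is to bound $\mathrm{R}(X)^p$ pointwise by a sum over short simple closed geodesics and then apply Mirzakhani's integration formula. First I would reduce to simple closed geodesics: any closed geodesic of length $<1$ is simple (Collar Lemma), so $\mathrm{R}(X) = \sum_{\gamma \text{ s.c.g.}, \ell(\gamma)<1}\log(1/\ell(\gamma))$. Moreover, by the Collar Lemma the collars around distinct such geodesics are disjoint and each has area bounded below by a universal constant, so the number $N(X)$ of simple closed geodesics with $\ell(\gamma)<1$ is at most a universal multiple of $g$, say $N(X) \le c_0 g$. Using the elementary inequality \eqref{for:sum_an} applied to the $N(X)$ terms $a_\gamma = \log(1/\ell(\gamma))$ (with an extra harmless constant to handle the case $N(X)=0$), I get
\[
	\mathrm{R}(X)^p \le (c_0 g)^p \max_\gamma \big(\log(1/\ell(\gamma))\big)^p \le (c_0 g)^p \sum_{\gamma \text{ s.c.g.}, \ \ell(\gamma)<1} \big(\log(1/\ell(\gamma))\big)^p,
\]
so it suffices to show $\mathbb{E}_{\mathrm{WP}}^g\big[\sum_{\gamma}(\log(1/\ell(\gamma)))^p\big] = O_p(g)$, where the sum is over simple closed geodesics of length $<1$.

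Next I would split the sum over simple closed geodesics according to topological type: non-separating, and separating into pieces of genus $i$ and $g-i$ for $1\le i \le g/2$ (each with one boundary component; there are no cusps). For each type, Mirzakhani's integration formula \cite[Theorem 7.1]{mirzakhani2007simple} expresses the expectation of the geometric sum as an integral of $F(t) = (\log(1/t))^p \mathbbm{1}_{\{t<1\}}$ against a weight built from Weil-Petersson volumes:
\[
	\mathbb{E}_{\mathrm{WP}}^g\Big[\sum_{\gamma \text{ non-sep}} F(\ell(\gamma))\Big] = \frac{1}{V_g}\int_0^\infty F(t)\, t\, V_{g-1,2}(t,t)\, \tfrac12\,\mathrm{d}t,
\]
and similarly for the separating types with $\frac12 V_{i,1}(t) V_{g-i,1}(t)$. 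I would then invoke the standard volume estimates used in \cite{mirzakhani2013growth, nie2023large}: $\frac{V_{g-1,2}(t,t)}{V_g} = O(1)$ uniformly for $t\in[0,1]$ (in fact $\le$ const, using $V_{g-1,2}(t,t) \le e^{t} V_{g-1,2}$ and $V_{g-1,2}/V_g = O(1)$), and $\sum_{i=1}^{g/2}\frac{V_{i,1}(t)V_{g-i,1}(t)}{V_g} = O(1)$ uniformly for $t\in[0,1]$ (this is exactly the kind of bound behind the fact that expected numbers of short separating curves are $O(1/g)$, or at worst $O(1)$; the crude polynomial-in-$t$ factor $e^t \le e$ is absorbed into the constant). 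This turns everything into $O_p(1)\cdot \int_0^1 (\log(1/t))^p\, \mathrm{d}t = O_p(1)$, since $\int_0^1 (\log(1/t))^p\,\mathrm{d}t = \Gamma(p+1) < \infty$. Combining, $\mathbb{E}_{\mathrm{WP}}^g[\mathrm{R}(X)^p] \le (c_0 g)^p \cdot O_p(1) = O_p(g^p)$, which gives the first claim after dividing by $g^p$.

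For the sharper statement when $p=1$, I would not use the crude bound $\mathrm{R}(X) \le N(X)\max_\gamma(\cdots)$, since that costs a factor of $g$. Instead, apply Mirzakhani's formula directly to $\mathrm{R}(X) = \sum_\gamma \log(1/\ell(\gamma))\mathbbm{1}_{\{\ell(\gamma)<1\}}$ with $F(t)=\log(1/t)\mathbbm{1}_{\{t<1\}}$, using the same decomposition by topological type and the same volume ratio bounds; this directly yields $\mathbb{E}_{\mathrm{WP}}^g[\mathrm{R}(X)] \le O(1)\int_0^1\log(1/t)\,\mathrm{d}t = O(1)$. The main obstacle is getting clean, genus-uniform control of the volume ratios $V_{g-1,2}(t,t)/V_g$ and $\sum_i V_{i,1}(t)V_{g-i,1}(t)/V_g$ on $t\in(0,1)$; these are not proved in the excerpt but are by now standard consequences of Mirzakhani–Zograf asymptotics together with the sinh-expansion $V_{g,n}(t) \asymp \prod_j \frac{\sinh(t_j/2)}{t_j/2} V_{g,n}$ up to $(1+o(1))$, so I would cite \cite{mirzakhani2013growth} (and \cite{mirzakhani2019lengths, nie2023large}) for them. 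Everything else — the Collar Lemma counting bound, the elementary inequality \eqref{for:sum_an}, and the finiteness of $\int_0^1(\log(1/t))^p\,\mathrm{d}t$ — is routine.
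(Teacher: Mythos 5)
Your proposal is correct and follows essentially the same route as the paper's proof: bound $\mathrm{R}(X)^p$ by $(cg)^p$ times a sum of $(\log(1/\ell(\gamma)))^p$ using the Collar Lemma count together with inequality \eqref{for:sum_an}, then apply Mirzakhani's integration formula with the decomposition into $V_{g-1,2}(t,t)$ and $\sum_i V_{i,1}(t)V_{g-i,1}(t)$, and close with the volume ratio estimates from \cite{mirzakhani2013growth, nie2023large} and the finiteness of $\int_0^1 (\log(1/t))^p\,\mathrm{d}t$; the $p=1$ sharpening by skipping the $(cg)^p$ factor is also how the paper does it.
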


\begin{proof}
	Define $f_p: \mathbb{R}_{>0} \to \mathbb{R}$ as 
\[
	f_p(x) :=
	\begin{cases}
		(-3\log x)^p, &\text{ if }0 <x< 1;\\
		0, & \text{ if } x \ge  1 . 
	\end{cases}
\]
And we define $f^{\gamma}_p: \mathcal{M}_g \to \mathbb{R}$ as 
\[
	f^{\gamma}_p(X) := \sum\limits_{\gamma \in \mathcal{P}(X), \ \ell(\gamma) <1} f_p \left( \ell_{\gamma}(X) \right). 
\]

\noindent By definition, we know that $$\mathrm{R}(X) \le f^{\gamma}_1(X).$$ By the classical Collar Lemma we know that the number of closed geodesics of length $\leq 1$ is no more than $3g-3$, it then follows from \eqref{for:sum_an} that $$\mathrm{R}(X)^p \le (3g)^p f^{\gamma}_p(X)$$ for all $p>0$. Now we estimate $\mathbb{E}_{\mathrm{WP}}^{g}[f^{\gamma}_p]$. 
Define $V(\text{cut},t)$ to be the volume of the moduli space $\{X \backslash \gamma; \ X\in \mathcal{M}_g, \gamma\subset X \text{ is a simple closed geodesic with }\ell_{\gamma}(X)=t \}$. Then
\[
	V(\text{cut},t) = V_{g-1,2}(t,t) + \sum_{k=1}^{g-1} V_{k,1}(t)\times V_{g-k,1}(t).
\]
If $t=0$, it is known from \cite[Theorem 3.5]{mirzakhani2013growth} that $V_{g-1,2}\asymp V_g$, and it is also known from \cite[Corollary 3.7]{mirzakhani2013growth} that $\sum_{k=1}^{g-1} V_{k,1}\times V_{g-k,1} \asymp \frac{V_g}{g}$. So 
\begin{equation}\label{v-e-c}
\left(V_{g-1,2} + \sum_{k=1}^{g-1} V_{k,1}\times V_{g-k,1}\right)\asymp V_g.  
\end{equation}
\noindent Now we apply Mirzakhani's integration formula \cite[Theorem 7.1]{mirzakhani2007simple} to bound $\mathbb{E}_{\mathrm{WP}}^{g}[f^{\gamma}_p]$. 
\begin{eqnarray*}
	&&\int_{\mathcal{M}_g} f_p^{\gamma}(X) \mathrm{d}\vol \\
	&\leq &\int_{0}^{+\infty} tf_p(t) V(\text{cut},t)\mathrm{d}t \text{\,\,\,\quad (by Mirzakhani's integration formula)}\\
							  &=& \int_0^{1}tf_p(t) \left( V_{g-1,2}(t,t) + \sum_{k=1}^{g-1} V_{k,1}(t)\times V_{g-k,1}(t) \right) \mathrm{d}t \\
							  &\asymp & \left( V_{g-1,2} + \sum_{k=1}^{g-1} V_{k,1}\times V_{g-k,1} \right)\int_0^{1}tf_p(t)\mathrm{d}t \text{\,\,\,\quad (by \cite[Lemma 22]{nie2023large})} \\
							  & \asymp &V_g \times \int_0^{1}tf_p(t)\mathrm{d}t  \text{\,\,\,\,\quad (by \eqref{v-e-c})}\\
							  & \asymp_p & V_{g} .
\end{eqnarray*}

\noindent Therefore, when $p=1$, 
\[
	\mathbb{E}_{\mathrm{WP}}^{g}[\mathrm{R}(X)] \le \mathbb{E}_{\mathrm{WP}}^{g}[f_1^{\gamma}] = O(1). 
\]
For $p > 0$, 
\[
	\frac{\mathbb{E}_{\mathrm{WP}}^{g}[\mathrm{R}(X)^p]}{g^p} \le (3)^p\times\mathbb{E}_{\mathrm{WP}}^{g}[f_p^{\gamma}] = O_p(1). 
\]
The proof is complete. 
\end{proof}

Now we are ready to prove Theorems \ref{thm:wp_prob} and \ref{thm:wp_exp}.

\begin{proof}[Proof of Theorem \ref{thm:wp_exp}]
By \cite[Theorem 1.1]{gao2025shortest} we know that for $g$ large enough, 
\begin{equation}\label{lb-7g}
\slfs(X)\geq (8g-4)\arccosh\left(\sqrt{2}\cos\left(\frac{\pi}{8g-4}\right)\right)>7g   
\end{equation}
for any $X\in \sM_g$. So for any $p>0$, the lower bound to $ \mathbb{E}_{\mathrm{WP}}^g[(\slfs)^p]$ follows directly:
\[
\mathbb{E}_{\mathrm{WP}}^g[(\slfs)^p] > \mathbb{E}_{\mathrm{WP}}^g \left[ \left(7g  \right)^p  \right] = \left(7g  \right)^p. 
\]
In particular, when $p=1$, 
\[
\mathbb{E}_{\mathrm{WP}}^g[\slfs] > 7g. 
\]
Recall that the upper bound in Theorem \ref{mt-1} says that for any $X \in \mathcal{M}_g$, 
	\[
		 \slfs(X) \le 300g+12\mathrm{R}(X).
	\]
\noindent  For the upper bound, when $p=1$, for sufficiently large $g$,
\begin{eqnarray*}
	\mathbb{E}_{\mathrm{WP}}^g[\slfs] &\le& \mathbb{E}_{\mathrm{WP}}^g \left[ \left(300g + 12\mathrm{R}(X)  \right)  \right]  \\
					      &=& \mathbb{E}_{\mathrm{WP}}^g \left[ 300g \right] + \mathbb{E}_{\mathrm{WP}}^g \left[12\mathrm{R}(X)    \right] \\
					      &=& 300g + O(1) \text{\,\,\,\quad (by Lemma \ref{lem:r_exp})} \\
					      &<& C_2g,
\end{eqnarray*}
where $C_2>300$ is arbitrary. For $p>0$, 
\begin{eqnarray*}
	\mathbb{E}_{\mathrm{WP}}^g[(\slfs)^p] &\le& \mathbb{E}_{\mathrm{WP}}^g \left[ \left(300g + 12\mathrm{R}(X)  \right)^p  \right]  \\
					      & \le & \mathbb{E}_{\mathrm{WP}}^g \left[ 2^p \left( \left(300g\right)^p + \left(12\mathrm{R}(X)  \right)^p \right)   \right] \text{\,\,\,\quad (by (\ref{for:sum_an}))} \\
					      &=& O_p(1)\times g^p + O_p(1)\times g^p \text{\,\,\,\quad \  (by Lemma \ref{lem:r_exp})} \\
					      &=& O_p(1)\times g^p. 
\end{eqnarray*}
The proof is complete. 
\end{proof}

\begin{proof}[Proof of Theorem \ref{thm:wp_prob}]
	For any $X \in \mathcal{M}_g$, by \eqref{lb-7g} we know that for $g$ large enough, $$\mathcal{L}_{\sys}^{\mathrm{fill}}(X) >7g.$$ It remains to show the upper bound. For $C > 300$, it follows that 
	\begin{eqnarray*}
	& &\prob_{\mathrm{WP}}^{g}(X\in \mathcal{M}_g; \ \mathcal{L}_{\sys}^{\mathrm{fill}}(X) \geq Cg) \\
	& \le & \prob_{\mathrm{WP}}^{g}\left(X \in \mathcal{M}_g; \ \mathrm{R}(X) \geq \frac{C-300}{12}g  \right) \text{\,\,\, \quad  (by Theorem \ref{mt-1}})\\
	& \le & \frac{\mathbb{E}^g_{\mathrm{WP}} [\mathrm{R}(X)]}{\frac{C-300}{12}g} \text{\,\,\,\quad \quad (by Markov's inequality)} \\
	& \le & \frac{O(1)}{\frac{C-300}{12}g} \text{\,\,\,\quad \quad \quad (by Lemma \ref{lem:r_exp})} \\
	&=& O \left( \frac{1}{g} \right). 
	\end{eqnarray*}
	The theorem then follows by letting $g\to \infty$. 	
\end{proof}

\subsection{Brooks-Makover random surfaces}
We refer to \emph{e.g.} \cite{brooks2004random, petri2015random, SW23} for the construction of random hyperbolic surfaces in the Brooks-Makover model. A cusped hyperbolic surface in such a model is required to have zero shearing coordinates. Two ideal triangles that share a common edge have shearing zero at the common edge if the inscribed circles of the two triangles intersect on the common edge (see Figure \ref{fig:ideal_1}). 
\begin{figure}[htbp]
	\centering
	\includegraphics{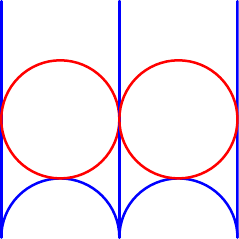}
	\caption{Zero shearing}
	\label{fig:ideal_1}
\end{figure}

We first prove Theorem \ref{mt-3} for the cusp case.

\begin{proposition}\label{mt3-cus}
For any $\omega\in \Omega_N$,
\[\pi N \le  \mathcal{L}^{\mathrm{fill}}_{\mathrm{sys}} (S_O(\omega)) \le 6N.\]
\end{proposition}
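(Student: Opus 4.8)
The plan is to prove the two bounds separately, and both will rest on understanding the cusped surface $S_O(\omega)$ as a union of $2N$ ideal hyperbolic triangles glued with zero shearing. For the lower bound $\pi N \le \mathcal{L}^{\mathrm{fill}}_{\mathrm{sys}}(S_O(\omega))$, I would first note that $S_O(\omega)$ is a complete hyperbolic surface of finite area, with area equal to $2N \cdot \pi = 2\pi N$ by Gauss–Bonnet applied to ideal triangles (each ideal triangle has area $\pi$). A filling closed multi-geodesic in the cusped setting cuts the surface into pieces each homotopic to a point or to a loop around a cusp, i.e. into topological disks once the cusps are ignored. The same two-dimensional isoperimetric inequality used in \eqref{lb-ml} — the perimeter of a bounded hyperbolic contractible domain exceeds its area — should apply (with care about the cusp neighborhoods, which can be cut off along short horocycles contributing negligibly, or handled by a limiting argument), giving that the length of any filling multi-geodesic is at least roughly half the area, i.e. $\ge \pi N$. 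I expect this adaptation of the isoperimetric argument to the cusped case to be essentially the argument already used in the closed case, just applied to $S_O(\omega)$.

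For the upper bound $\mathcal{L}^{\mathrm{fill}}_{\mathrm{sys}}(S_O(\omega)) \le 6N$, the natural move is to exhibit an explicit filling closed multi-geodesic. The ideal triangulation $\mathcal{T}$ of $S_O(\omega)$ by the $2N$ ideal triangles is itself a kind of "filling graph" (with ideal vertices at the cusps), and its dual $4$-valent graph $\mathcal{D}(\mathcal{T})$ — connecting midpoints of adjacent sides of the ideal triangles by geodesics — should be a filling closed multi-curve in the cusp sense, since $\mathcal{T}$ fills and $\mathcal{D}(\mathcal{T})$ separates each triangle's "middle" from its three corner-regions running into cusps. The key point is a length estimate: because of the zero-shearing condition, the inscribed circles of adjacent triangles meet on the common edge, so the midpoints one uses are exactly the tangency points of the inscribed circles, and the geodesic arc of $\mathcal{D}(\mathcal{T})$ inside each ideal triangle connecting two such tangency points has a universal length. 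A direct computation in the standard ideal triangle with vertices $0, 1, \infty$ gives that the geodesic segment joining two of the three inscribed-circle tangency points has length $2\arcsinh(1/\sqrt{3})$ or a comparable explicit constant; I would compute this and check that three such segments per triangle, times $2N$ triangles, divided by $2$ (each segment is shared by... actually each arc lies in one triangle and there are $3 \cdot 2N / 2 = 3N$ arcs since each edge-midpoint has degree... let me recount: each of the $3N$ edges carries one midpoint, the $4$-valent graph has $3N$ vertices, hence $6N$ half-edges, hence $6N/ \text{(arcs)}$...), so that the total length is at most $6N$. The bookkeeping $2N$ triangles $\times$ $3$ arcs each, each arc of length $\le \log 4 \approx 1.386$, would give $6N \cdot \log 4 / 2$, still too big; the sharper constant from the zero-shearing geometry is what makes $6N$ work, so pinning down that the per-arc length is at most $1$ (indeed $2\arcsinh(1/\sqrt{3}) \approx 1.0$) is the crux.

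The main obstacle I anticipate is twofold. First, on the upper-bound side, verifying that $\mathcal{D}(\mathcal{T})$ is genuinely \emph{filling in the cusped sense} — that each complementary region is a disk or a once-punctured disk around a cusp — and that it can be taken to consist of \emph{closed geodesics} (one would want an analogue of Proposition \ref{prop:ess_tri_mid_gph}, or at least that $\mathcal{D}(\mathcal{T})$ is isotopic to a filling multi-geodesic); the degree condition in Proposition \ref{prop:ess_tri_mid_gph} need not hold for an arbitrary $\omega$, so I would instead argue directly using the explicit ideal-triangle geometry, or simply bound $\mathcal{L}^{\mathrm{fill}}_{\mathrm{sys}}$ by the length of the \emph{curve} $\mathcal{D}(\mathcal{T})$ (not necessarily geodesic) since straightening to geodesics only decreases length. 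Second, the precise constant: I would carry out the ideal-triangle computation with vertices $0,1,\infty$, locate the inscribed circle (center at height $\frac{\sqrt{3}}{2} + \frac{i}{2}$... actually the Euclidean inscribed circle), find the three tangency points on the three sides, and compute the hyperbolic distance between two of them; getting a clean bound $\le 1$ (or whatever constant makes $3 \cdot 2N \cdot c / 2 \le 6N$, i.e. $c \le 2$) is the computational heart, and I expect $c = 2\arcsinh(1/\sqrt 3) < 2$ comfortably suffices, with room to spare handling the few arcs and the straightening. Once the per-triangle contribution is controlled, summing over the $2N$ triangles and invoking the isoperimetric lower bound finishes the proof.
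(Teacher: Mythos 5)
Your overall strategy coincides with the paper's: the lower bound via an isoperimetric inequality, and the upper bound by taking the inscribed-circle tangency points in each ideal triangle and joining them by geodesic arcs. But there are two genuine errors in the quantitative part that would break your proof as written.

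First, the per-arc length. The geodesic segment joining two tangency points of the inscribed circle of an ideal triangle has length $2\arcsinh\frac12 = \arccosh\frac32 \approx 0.962$, not $2\arcsinh(1/\sqrt3) \approx 1.099$. (In the model with ideal vertices $-1,1,\infty$, the tangency points are $i$, $1+2i$, $-1+2i$, and $\cosh d(i,1+2i) = 1 + \frac{1+1}{2\cdot1\cdot2} = \frac32$.) Second, the bookkeeping: each of the $2N$ ideal triangles contributes exactly $3$ arcs, and no arc is shared between two triangles (only the endpoints are shared), so the total length is $3\cdot 2N \cdot c = 6Nc$ with \emph{no} division by $2$. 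Thus you need $c \le 1$, not $c\le 2$. Your guessed constant $1.099$ fails this threshold and would give a bound $\approx 6.6N > 6N$; the correct constant $0.962 < 1$ is what makes $6N$ work. This is not a rounding issue — with your numbers the stated upper bound is false.

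You also flag, correctly, that you must explain why $\mathcal D(\mathcal T)$ is a \emph{filling closed multi-geodesic} and not merely a filling multi-curve, and you propose invoking an analogue of Proposition~\ref{prop:ess_tri_mid_gph} or a straightening argument; but the degree-$\ge 6$ hypothesis need not hold, and straightening a non-minimal-position curve system does not obviously preserve the filling property. The paper sidesteps this entirely with a short angle computation exploiting the zero-shearing condition and the $\mathbb Z_3$-symmetry of ideal triangles: at each tangency point, the angles made by the incoming arcs from the two adjacent triangles sum to $\pi$ on each side of the common edge, so the arcs already concatenate into smooth closed geodesics, with no isotopy needed. I would strongly encourage you to find and record this angle argument rather than appealing to the heavier machinery, since it is both cleaner and necessary to get a \emph{geodesic} multi-curve directly. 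For the lower bound, your sketch is essentially the paper's, but the cusped-polygon case of the isoperimetric inequality $\area(P) \le \perim(P)$ should be cited explicitly (the paper points to a result of Saha for polygons with one cusp) rather than handled by a vague "limiting argument."
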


\begin{proof}
For the lower bound to $\mathcal{L}^{\mathrm{fill}}_{\mathrm{sys}} (S_O(\omega))$, observe that a filling closed multi-geodesic cuts $S_O(\omega)$ into geodesic polygons and geodesic polygons with one cusp. For a polygon $P$, with or without one cusp, $\area(P) \le \perim(P)$. For the polygon without any cusp, it follows from \emph{e.g.} \cite[Section 8.1 Page 211]{buser2010geometry}. For the polygon with one cusp, it follows from \emph{e.g.} \cite[Theorem 1.2]{saha2024length}. 
Therefore, $$\mathcal{L}^{\mathrm{fill}}_{\mathrm{sys}} (S_O(\omega)) \ge \frac{1}{2}\area(S_O(\omega)) = \pi N.$$
Next we prove the upper bound. For each ideal triangle, we join the three points on its three edges that touch the triangle's inscribed circle. Then we get a small geodesic triangle contained in the ideal triangle (see Figure \ref{fig:ideal_2}). Observe that by the symmetry of the triangles (see Figure \ref{fig:ideal_2}), $\angle 1 = \angle 3 = \angle 4 = \angle 6$, $\angle 2 = \angle 5$, therefore $\angle 2 + \angle 3 + \angle 4 = \angle 1 + \angle 6 + \angle 5 = \pi$ and $\angle 3+ \angle 4 + \angle 5 = \angle 6 + \angle 1 + \angle 2 = \pi$. Thus, the edges of small triangles form a set of closed geodesics in $S_O(\omega)$. 
\begin{figure}[htbp]
	\centering
	\includegraphics{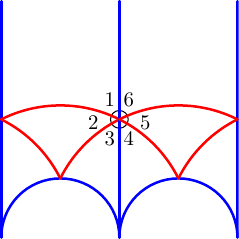}
	\caption{Small geodesic triangles in $S_O(\omega)$}
	\label{fig:ideal_2}
\end{figure}
\noindent The set of those closed geodesics fills $S_O(\omega)$, since their union cut the surface $S_O(\omega)$ into certain convex triangles and cusped disks. It remains to compute its total length. This can be obtained by elementary computations, which we briefly discuss here. 
\begin{figure}[htbp]
	\centering
	\includegraphics{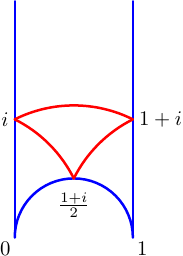}
	\caption{Length of a small triangle}
	\label{fig:ideal_3}
\end{figure}
First, we know that the length of each edge of a small triangle (see Figure \ref{fig:ideal_3}) is equal to $$l=2\arcsinh \frac{1}{2} \approx 0.962$$ by direct calculation (see \emph{e.g.} \cite[Remark on P. 15296]{SW23}). Then the perimeter of each small triangle is $3l \approx 2.887$. Therefore, we get
\[\mathcal{L}^{\mathrm{fill}}_{\mathrm{sys}} (S_O(\omega))\leq 3l\cdot 2N \approx 5.775N < 6N\]
as desired. The proof is complete.
\end{proof}

Now we prove Theorem \ref{mt-3} for the closed case.
\begin{proposition}\label{mt3-comp}
The following limit holds.
\[
		\lim\limits_{N\to \infty}\prob_{\mathrm{BM}}^N \left( \omega\in \Omega_N; \ \frac{7}{2}N <  \mathcal{L}^{\mathrm{fill}}_{\mathrm{sys}} (S_C(\omega)) < 6N \right) = 1.
	\]
\end{proposition}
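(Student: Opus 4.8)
The plan is to prove the two inequalities separately. The lower bound $\frac72 N<\slfs(S_C(\omega))$ will come, with probability tending to $1$, from the sharp global lower bound of \cite{gao2025shortest} together with the fact that $S_C(\omega)$ has few cusps. The upper bound $\slfs(S_C(\omega))<6N$ will be obtained by straightening the ideal triangulation that $S_C(\omega)$ inherits from $S_O(\omega)$ into a geodesic triangulation and applying Proposition \ref{prop:ess_tri_mid_gph} to its dual $4$-valent graph, after first deleting the few low-degree vertices coming from short cusps. Throughout, $n(\omega)$ denotes the number of cusps of $S_O(\omega)$ and $g(\omega)$ the genus of $S_C(\omega)$; since $S_O(\omega)$ is assembled from $2N$ ideal triangles, its compactification has a triangulation with $2N$ faces, $3N$ edges and $n(\omega)$ vertices, so $2g(\omega)-2=N-n(\omega)$.

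\emph{Lower bound.} Since $S_C(\omega)\in\mathcal M_{g(\omega)}$, the computation of $\min_{\mathcal M_g}\slfs$ in \cite{gao2025shortest} gives
\[
\slfs(S_C(\omega))\ge (8g(\omega)-4)\arccosh\!\left(\sqrt2\cos\left(\tfrac{\pi}{8g(\omega)-4}\right)\right)\ge 4\arccosh(\sqrt2)\,(2g(\omega)-2)-C
\]
for an absolute constant $C$ and all $g(\omega)$ large, because $\cos(\tfrac{\pi}{8g-4})\to1$ and the right-hand side is eventually increasing; thus $\slfs(S_C(\omega))\ge 4\arccosh(\sqrt2)\,(N-n(\omega))-C$. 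Now $4\arccosh(\sqrt2)=4\log(1+\sqrt2)\approx 3.5255>\tfrac72$, so it is enough to know that $n(\omega)=o(N)$ with probability $\to1$; this is standard for the Brooks–Makover model, where the expected number of cusps is $O(\log N)$, so by Markov's inequality $n(\omega)=o(N)$ with probability $\to1$ (see \cite{brooks2004random, gamburd2006poisson}). For $N$ large this yields $\slfs(S_C(\omega))\ge 4\arccosh(\sqrt2)N-o(N)-C>\tfrac72N$ on this event.

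\emph{Upper bound.} Filling in the cusps turns the $2N$ ideal triangles of $S_O(\omega)$ into a topological triangulation $\mathcal T_0$ of $S_C(\omega)$ in which the vertex attached to a cusp $c$ has degree equal to the combinatorial length of $c$. With probability $\to1$ the comparison between $S_O(\omega)$ and $S_C(\omega)$ (as in \cite{brooks2004random}, see also \cite{SW23}), which is $(1+o(1))$-bi-Lipschitz on the complement of standard horoball neighbourhoods of the cusps, lets one straighten $\mathcal T_0$ to a genuine geodesic triangulation $\mathcal T$ of $S_C(\omega)$ with the same combinatorics, and moreover transports the filling multi-geodesic made of the inscribed small triangles from Proposition \ref{mt3-cus} — which lies well inside the ideal triangles, away from the cusps, and has length at most $3\cdot 2\arcsinh\tfrac12\cdot N<5.78\,N$ — to a filling multi-curve in $S_C(\omega)$ of length $\le(1+o(1))\cdot5.78\,N<6N$. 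The point of Proposition \ref{prop:ess_tri_mid_gph} is that $\mathcal D(\mathcal T)$ (equivalently, the isotopic transported curve) is actually isotopic to a \emph{filling closed multi-geodesic} $\delta$ of $S_C(\omega)$, and then, since each medial triangle has perimeter less than half that of its parent triangle, $\slfs(S_C(\omega))\le\ell(\delta)\le\ell(\mathcal D(\mathcal T))<6N$.

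The one genuine obstacle is that Proposition \ref{prop:ess_tri_mid_gph} requires every vertex of the triangulation to have degree $\ge6$, whereas a cusp of combinatorial length $<6$ produces a vertex of $\mathcal T$ of degree $<6$. However $\mathcal T$ has only $n(\omega)=o(N)$ vertices in all, so with probability $\to1$ there are $o(N)$ such bad vertices; moreover, because almost every vertex of $\mathcal T$ has very large degree, one may, whp, repeatedly contract an edge joining a bad vertex to a large-degree neighbour and re-straighten the at most three edges that move. Each such move removes one bad vertex, creates no new vertex of degree $<6$, preserves the property of being a geodesic triangulation and the homeomorphism type of the complement of the dual graph, and changes the total length by $O(1)$; after $o(N)$ moves one arrives at a geodesic triangulation $\mathcal T'$ of $S_C(\omega)$ with all vertices of degree $\ge6$ and $\ell(\mathcal D(\mathcal T'))<6N$. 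Applying Proposition \ref{prop:ess_tri_mid_gph} to $\mathcal T'$ then gives a filling closed multi-geodesic of length $<6N$, so $\slfs(S_C(\omega))<6N$ on this event. Intersecting with the event from the lower bound, both of which have probability $\to1$, completes the proof. I expect the verification that this local cleanup of short-cusp vertices can be done while keeping the geometric length bounded by $6N$ — i.e. combining Proposition \ref{prop:ess_tri_mid_gph} with the Brooks–Makover geometric comparison — to be the main technical point.
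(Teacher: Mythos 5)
Your lower bound argument is essentially the paper's: both rest on the sharp bound from \cite{gao2025shortest} and the fact that the cusp count $n(\omega)$ is $o(N)$ with probability tending to $1$, so that $g(\omega)=1+(N-n(\omega))/2\approx N/2$. (The paper cites \cite[Corollary~5.1]{gamburd2006poisson} for the genus estimate and uses the form $\slfs>7g$ directly, but the logic is the same.)

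Your upper bound, however, has a genuine gap. You correctly identify that Proposition~\ref{prop:ess_tri_mid_gph} requires every vertex of the geodesic triangulation $\mathcal T$ of $S_C(\omega)$ to have degree $\ge 6$, and you propose to achieve this by a cleanup procedure: repeatedly contracting an edge from a bad vertex to a high-degree neighbour and re-straightening. This step is not justified, and it is not clear it can be carried out as claimed. Contracting an edge $uv$ in a triangulation also \emph{decreases} by $1$ the degrees of the two vertices opposite to $uv$ in the two triangles containing $uv$, so each move may create new bad vertices; moreover the re-straightening step need not change the total length by $O(1)$, and you never verify that the dual $4$-valent graph of the modified triangulation is still isotopic to the original filling multi-curve, which is what you actually need for the length bound. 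You yourself flag this as ``the main technical point,'' but it is not a detail---as stated the argument does not close.

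The paper sidesteps the issue entirely. It invokes \cite[Theorem~2.1]{brooks2004random}: for any $L>0$, with probability tending to $1$ the surface $S_O(\omega)$ has all cusps of size at least $L$. Since the degree of a vertex of $\mathcal T$ equals the combinatorial length of the corresponding cusp, choosing $L$ large makes \emph{every} vertex of $\mathcal T$ have degree $>6$ on an event of probability $\to 1$. No cleanup is needed. For the length comparison, instead of a $(1+o(1))$-bi-Lipschitz estimate, the paper uses the Schwarz--Pick lemma: the inclusion $S_O(\omega)\hookrightarrow S_C(\omega)$ is holomorphic, hence a contraction for the hyperbolic metrics, so the geodesic representative in $S_C(\omega)$ of (the isotopy class of) the filling multi-curve from Proposition~\ref{mt3-cus} has length at most its length in $S_O(\omega)$, which is $<6N$. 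This is both cleaner and unconditional, avoiding the need to control a bi-Lipschitz constant away from the cusps. You should replace the cleanup procedure with the large-cusps theorem and the bi-Lipschitz heuristic with Schwarz--Pick.
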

\begin{proof}
For the lower bound, it is known by \cite[Corollary $5.1$ and $(2.1)$]{gamburd2006poisson} that
\[
	\lim\limits_{N\to \infty}\prob_{\mathrm{BM}}^N \left( \omega\in \Omega_N; \  \mathrm{genus}(S_C(\omega)) = 1+ \frac{N}{2} + O(\log N) \right)=1.
	\]
And it is also known from \eqref{lb-7g} that for sufficiently large $g$, 
\[\mathcal{L}^{\mathrm{fill}}_{\mathrm{sys}} (X_g)>7g.\]
Combining these two results above, we get 
\[\lim\limits_{N\to \infty}\prob_{\mathrm{BM}}^N \left( \omega\in \Omega_N; \   \mathcal{L}^{\mathrm{fill}}_{\mathrm{sys}} (S_C(\omega))>\frac{7}{2}N \right) = 1.\]
For the upper bound, notice that the ideal triangulation of $S_O(\omega)$ induces a geodesic triangulation $\mathcal{T}$ of $S_C(\omega)$ (take the image of the ideal triangulation of $S_O(\omega)$ onto $S_C(\omega)$ and straighten their edges). So, the set of vertices of $\mathcal{T}$ consists of all the punctures of $S_O(\omega)$, and the degree of each vertex depends on the size of the horoball around the corresponding puncture in $S_O(\omega)$. Thanks to 
\cite[Theorem 2.1]{brooks2004random}, we know that for any $L>0$,
	\[\lim\limits_{N\to \infty}\prob_{\mathrm{BM}}^N \left(\omega\in \Omega_N; \ S_O(\omega) \text{ has large cusps} \ge L \right)=1.
	\]
Thus, choosing $L>0$ large enough, one may assume that for a random surface in $\Omega_N$, each vertex of the geodesic triangulation $\mathcal{T}$ of $S_C(\omega)$ has degree $>6$. Then it follows from Proposition \ref{prop:ess_tri_mid_gph} that the dual $4$-valent graph $\mathcal{D}(\mathcal{T})$ of $\mathcal{T}$ is a filling closed multi-curve, isotopic to a filling closed multi-geodesic in $S_C(\omega)$. Observe that the image of the filling closed multi-geodesic in $S_O(\omega)$ constructed in Proposition \ref{mt3-cus} is isotopic to $\mathcal{D}(\mathcal{T})$ in $S_C(\omega)$, since the inclusion $i:S_O(\omega) \hookrightarrow S_C(\omega)$ is holomorphic, it then follows from the standard Schwarz-Pick Lemma that the length of the filling closed multi-geodesic that is isotopic to $\mathcal{D}(\mathcal{T})$ in $S_C(\omega)$ is no more than the length of the filling closed multi-geodesic in $S_O(\omega)$ constructed in Proposition \ref{mt3-cus}, that is less than $6N$. This proves the upper bound. The proof is complete.
\end{proof}

\begin{proof}[Proof of Theorem \ref{mt-3}]
It follows from Propositions \ref{mt3-cus} and \ref{mt3-comp}.
\end{proof}

\begin{remark}
In general, the image of curves in minimal position in $S_O(\omega)$ may not again be in minimal position in $S_C(\omega)$, for example, see Figure \ref{fig:bigon}. Proposition \ref{prop:ess_tri_mid_gph} is essential in the proof of Proposition \ref{mt3-comp}, thus
the upper bound is only for random surfaces in $\Omega_N$, rather than for every surface in $\Omega_N$.    
\end{remark}

 \bibliographystyle{alpha}
 \bibliography{random_fill}
\end{document}